\newcommand*{\mailto}[1]{\href{mailto:#1}{\nolinkurl{#1}}}
\newcommand{\arxiv}[1]{\href{http://arxiv.org/abs/#1}{arXiv:#1}}
\newcommand{\R}{{\mathbb R}}
\newcommand{\N}{{\mathbb N}}
\newcommand{\Z}{{\mathbb Z}}
\newcommand{\C}{{\mathbb C}}
\newcommand{\bbC}{{\mathbb{C}}}
\newcommand{\bbN}{{\mathbb{N}}}
\newcommand{\bbR}{{\mathbb{R}}}
\newcommand{\bbZ}{{\mathbb{Z}}}
\newcommand{\beq}{\begin{equation}}
\newcommand{\enq}{\end{equation}}
\renewcommand{\a}{\alpha}
\renewcommand{\b}{\beta}
\newcommand{\g}{\gamma}
\renewcommand{\d}{\delta}
\renewcommand{\l}{\lambda}
\newcommand{\s}{\sigma}
\newcommand{\G}{\Gamma}
\DeclareMathOperator{\supp}{supp}
\DeclareMathOperator{\dom}{dom}
\renewcommand{\Re}{\text{\rm Re}}
\renewcommand{\Im}{\text{\rm Im}}
\renewcommand{\ln}{\text{\rm ln}}
\newcommand{\no}{\notag}
\newcommand{\lb}{\label}
\newcommand{\f}{\frac}
\newcommand{\ol}{\overline}
\newcommand{\bs}{\backslash}
\newcommand{\wti}{\widetilde}
\newcommand{\Oh}{O}
\newcommand{\oh}{o}
\newcommand{\hatt}{\widehat} 
\newcommand{\dott}{\,\cdot\,}
\renewcommand{\dot}{\overset{\textbf{\Large.}}}
\newcommand{\bi}{\bibitem}
\let\geq\geqslant
\let\leq\leqslant
\newcommand{\la}{\lambda}
\newcommand{\lam}{\lambda}
\newcommand{\al}{\alpha}
\newcommand{\be}{\beta}
\newcommand{\Lr}{{L^2((a,b);rdx)}} 
\newcommand{\ACl}{{AC_{loc}((a,b))}}
\newcommand{\Ll}{{L^1_{loc}((a,b);dx)}}
\newcommand{\Pn}{P_n^{\alpha,\beta}(x)}
\def\theequation{\@arabic\c@equation}
\numberwithin{equation}{section}
\newtheorem{theorem}{Theorem}[section]
\newtheorem{proposition}[theorem]{Proposition}
\newtheorem{lemma}[theorem]{Lemma}
\newtheorem{corollary}[theorem]{Corollary}
\newtheorem{definition}[theorem]{Definition}
\newtheorem{hypothesis}[theorem]{Hypothesis}
\newtheorem{example}[theorem]{Example}
\theoremstyle{remark}
\newtheorem{remark}[theorem]{Remark}
\begin{document}

\title[Jacobi $m$-functions]{The Jacobi Operator on $(-1,1)$ and \\ its Various $m$-Functions} 

\author[F.\ Gesztesy]{Fritz Gesztesy}
\address{Department of Mathematics, 
Baylor University, Sid Richardson Bldg., 1410 S.\,4th Street, Waco, TX 76706, USA}
\email{\mailto{Fritz\_Gesztesy@baylor.edu}}
\urladdr{\url{http://www.baylor.edu/math/index.php?id=935340}}

\author[L.\ L.\ Littlejohn]{Lance L. Littlejohn}
\address{Department of Mathematics, 
Baylor University, Sid Richardson Bldg., 1410 S.\,4th Street, Waco, TX 76706, USA}
\email{\mailto{Lance\_Littlejohn@baylor.edu}}
\urladdr{\url{http://www.baylor.edu/math/index.php?id=53980}}

\author[M. Piorkowski]{Mateusz Piorkowski}
\address{Department of Mathematics, KU Leuven \\
Celestijnenlaan 200B, 3001 Leuven, Belgium}
\email{\href{mailto:Mateusz.Piorkowski@kuleuven.be}{Mateusz.Piorkowski@kuleuven.be}}
\urladdr{\url{https://www.kuleuven.be/wieiswie/en/person/00153535}}

\author[J.\ Stanfill]{Jonathan Stanfill}
\address{Department of Mathematics, The Ohio State University \\
100 Math Tower, 231 West 18th Avenue, Columbus, OH 43210, USA}
\email{\mailto{stanfill.13@osu.edu}}
\urladdr{\url{https://u.osu.edu/stanfill-13/}}


\date{\today}
\@namedef{subjclassname@2020}{\textup{2020} Mathematics Subject Classification}
\subjclass[2020]{Primary: 33C45, 34B09, 34B24, 34C10, 34L40; Secondary: 34B20, 34B30.}
\keywords{Singular Sturm--Liouville operators, Jacobi and hypergeometric differential equation, boundary values, boundary conditions, 
Weyl--Titchmarsh--Kodaira $m$-functions.}

\begin{abstract} 
We offer a detailed treatment of spectral and Weyl--Titchmarsh--Kodaira theory for all self-adjoint Jacobi operator realizations of the differential expression 
\begin{align*} 
\tau_{\a,\b} = - (1-x)^{-\a} (1+x)^{-\b}(d/dx) \big((1-x)^{\a+1}(1+x)^{\b+1}\big) (d/dx),&     \\ 
\a, \b \in \bbR, \; x \in (-1,1),&  
\end{align*}
in $L^2\big((-1,1); (1-x)^{\a} (1+x)^{\b} dx\big)$, $\a, \b \in \bbR$. In addition to discussing the separated boundary conditions that lead to Jacobi orthogonal polynomials as eigenfunctions in detail, we exhaustively treat the case of coupled boundary conditions and illustrate the latter with the help of the general $\eta$-periodic and Krein--von Neumann extensions. In particular, we treat all underlying Weyl--Titchmarsh--Kodaira and Green's function induced $m$-functions and revisit their Nevanlinna--Herglotz property. We also consider connections to other differential operators associated with orthogonal polynomials such as Laguerre, Gegenbauer, and Chebyshev.
\end{abstract}

\maketitle

{\scriptsize{\tableofcontents}}

\section{Introduction} \lb{s1} 

Despite the prominence of the Jacobi differential expression 
\begin{align}
\begin{split}  
\tau_{\a,\b} = - (1-x)^{-\a} (1+x)^{-\b}(d/dx) \big((1-x)^{\a+1}(1+x)^{\b+1}\big) (d/dx),&     \\ 
\a, \b \in \bbR, \; x \in (-1,1),&    \lb{1.1} 
\end{split}     
\end{align}
in the orthogonal polynomial community (see, e.g., \cite[Ch.~22]{AS72}, \cite{AA85}, 
\cite[Ch.~6]{AAR99}, \cite[Ch.~4]{BW10}, 
\cite[Ch.~X]{EMOT53a}, \cite[Ch.~4]{Is05}, \cite[Ch.~V]{MOS66}, \cite[Ch.~18]{OLBC10}, \cite[Ch.~IV]{Sz75}), the actual spectral and Weyl--Titchmarsh--Kodaira theory for self-adjoint realizations associated with \eqref{1.1} has received relatively little attention, with the notable exception of the papers by 
Bush, Frymark, and Liaw \cite{BFL20} discussing the case $\a,\b\in(0,1)$, as well as Frymark and Liaw \cite{FL20} and Frymark \cite{Fr20} discussing the case $\a,\b\in[0,1)$. This paper aims at filling this gap and presents a discussion for the general case $\a, \b \in \bbR$. In particular, we derive the Weyl--Titchmarsh--Kodaira $m$-functions (for separated boundary conditions) and the Green's function induced $M$-functions (for coupled boundary conditions), for all self-adjoint realizations associated with \eqref{1.1}. We note that in the context of Donoghue $m$-functions, the general case was recently studied in \cite{GPS23} when at least one endpoint was in the limit circle case (see also \cite{GLNPS21} for background). In particular, the Donoghue $m$-functions associated with all coupled boundary conditions were explicitly constructed in \cite{GPS23}, which, to the best of our knowledge, was the first complete study of coupled boundary conditions in this setting.

We now turn to the basic structure of the present paper. Based on \cite{GLN20} and \cite[Ch.~13]{GNZ23}, we recall the basics of Weyl--Titchmarsh--Kodaira theory in Section \ref{s2}, and explicitly treat the corresponding $m$ and $M$ functions for separated and coupled boundary conditions, respectively, in Section \ref{s3}. In particular, the basics of the Green's function induced $M$-function in the presence of coupled boundary conditions is recalled in the last part of Section \ref{s3}. The minimal and maximal Jacobi operators and associated boundary values employing principal and nonprincipal solutions are discussed in Section \ref{s4}. 

Our main results on the explicit construction of Jacobi operator $m$ and $M$-functions are presented in Section \ref{s5}--\ref{s7}. Section \ref{s5} treats the regular and limit circle cases of $\tau_{\a,\b}$ corresponding to $\a, \b \in (-1,1)$, for both separated and coupled boundary conditions; the case where $\tau_{\a,\b}$ is in the limit point case at precisely one of the interval endpoints $\pm 1$ is treated in Section \ref{s6}; the case where $\tau_{\a,\b}$ is in the limit point case at $-1$ and $1$ is presented in Section \ref{s7}. In Section \ref{s8} we revisit situations where the Weyl--Titchmarsh--Kodaira $m$-function possesses the Nevanlinna--Herglotz property with particular emphasis on the case supporting Jacobi polynomials as eigenfunctions.

Appendix \ref{sA} on solutions of the hypergeometric and Jacobi differential equations, Appendix \ref{sB} on connection formulas, and Appendix \ref{sC} on the behavior of solutions near the endpoints $\pm1$, complement the bulk of this paper. Moreover, Appendix \ref{sD} addresses how to obtain Weyl--Titchmarsh--Kodaira $m$-functions for the Laguerre operator as a limit of the Weyl--Titchmarsh--Kodaira $m$-functions constructed in Sections \ref{s5}--\ref{s6} (see also Section \ref{s8}). Appendix \ref{sE} provides a brief discussion of special cases of the Jacobi differential expression, including the Gegenbauer (ultraspherical), Chebyshev, and radial part of the Zernike differential expressions. Some material in these appendices was also used in \cite{GPS23}. 

Finally, we briefly comment on some of the basic notation used throughout this paper.  If $T$ is a linear operator mapping (a subspace of) a Hilbert space into another, then $\dom(T)$ and $\ker(T)$ denote the domain and kernel (i.e., null space) of $T$. The spectrum and resolvent set of a closed linear operator in a Hilbert space will be denoted by $\sigma(\cdot)$ and $\rho(\cdot)$, respectively. We also use the shorthand notation $\bbN_0 = \bbN \cup \{0\}$ and  $\bbC_{\pm} = \{z \in \bbC \,|\, \pm \Im(z) > 0\}$.

\section{The Basics of Weyl--Titchmarsh--Kodaira Theory} \lb{s2}

In this section, following \cite{GLN20} and \cite[Ch.~13]{GNZ23}, we summarize the singular 
Weyl--Titchmarsh--Kodaira theory as needed to treat the Jacobi operator in the remainder of this paper. 

Throughout this section we make the following assumptions:

\begin{hypothesis} \lb{h2.1}
Let $(a,b) \subseteq \bbR$ and suppose that $p,q,r$ are $($Lebesgue\,$)$ measurable functions on $(a,b)$ 
such that the following items $(i)$--$(iii)$ hold: \\[1mm] 
$(i)$ \hspace*{1.1mm} $r>0$ a.e.~on $(a,b)$, $r\in\Ll$. \\[1mm] 
$(ii)$ \hspace*{.1mm} $p>0$ a.e.~on $(a,b)$, $1/p \in\Ll$. \\[1mm] 
$(iii)$ $q$ is real-valued a.e.~on $(a,b)$, $q\in\Ll$. 
\end{hypothesis}

Given Hypothesis \ref{h2.1}, we study Sturm--Liouville operators associated with the general, 
three-coefficient differential expression
\begin{equation}
\tau=\f{1}{r(x)}\left[-\f{d}{dx}p(x)\f{d}{dx} + q(x)\right] \, \text{ for a.e.~$x\in(a,b) \subseteq \bbR$,} 
   \lb{2.1}
\end{equation} 
and introduce maximal and minimal operators in $\Lr$ associated with $\tau$ in the usual manner as follows. 

\begin{definition} \lb{d2.2}
Assume Hypothesis \ref{h2.1}. Given $\tau$ as in \eqref{2.1}, the \textit{maximal operator} $T_{max}$ in $\Lr$ associated with $\tau$ is defined by
\begin{align}
&T_{max} f = \tau f,    \no
\\
& f \in \dom(T_{max})=\big\{g\in\Lr \, \big| \,g,g^{[1]}\in\ACl;   \lb{2.2} \\ 
& \hspace*{6.3cm}  \tau g\in\Lr\big\}.   \no
\end{align}
The \textit{preminimal operator} $\dot T_{min} $ in $\Lr$ associated with $\tau$ is defined by 
\begin{align}
&\dot T_{min}  f = \tau f,   \no
\\
&f \in \dom \big(\dot T_{min}\big)=\big\{g\in\Lr \, \big| \, g,g^{[1]}\in\ACl;   \lb{2.3}
\\
&\hspace*{3.25cm} \supp \, (g)\subset(a,b) \text{ is compact; } \tau g\in\Lr\big\}.   \no
\end{align}

One can prove that $\dot T_{min} $ is closable, and one then defines the \textit{minimal operator} $T_{min}$ as the closure of $\dot T_{min} $.
\end{definition}

The following facts then are well known:
\begin{equation} 
\big(\dot T_{min}\big)^* = T_{max}, 
\end{equation} 
and hence $T_{max}$ is closed and $T_{min}=\ol{\dot T_{min} }$ is given by
\begin{align}
&T_{min} f = \tau f, \no
\\
&f \in \dom(T_{min})=\big\{g\in\Lr  \, \big| \,  g,g^{[1]}\in\ACl;     \lb{2.5} \\
& \qquad \text{for all } h\in\dom(T_{max}), \, W(h,g)(a)=0=W(h,g)(b); \, \tau g\in\Lr\big\}   
\no \\
& \quad =\big\{g\in\dom(T_{max})  \, \big| \, W(h,g)(a)=0=W(h,g)(b) \, 
\text{for all } h\in\dom(T_{max}) \big\}.   \no 
\end{align}
Moreover, $\dot T_{min} $ is essentially self-adjoint if and only if\; $T_{max}$ is symmetric, and then 
$\ol{\dot T_{min} }=T_{min}=T_{max}$.

Here the Wronskian of $f$ and $g$, for $f,g\in\ACl$, is defined by
\begin{equation}
W(f,g)(x) = f(x)g^{[1]}(x) - f^{[1]}(x)g(x), \quad x \in (a,b),    \lb{23.2.3.1} \no \\
\end{equation}
with 
\begin{equation}
y^{[1]}(x) = p(x) y'(x), \quad x \in (a,b),
\end{equation}
denoting the first quasi-derivative of a function $y\in AC_{loc}((a,b))$.

The celebrated Weyl alternative then can be stated as follows:

\begin{theorem}[Weyl's Alternative] \lb{t2.3} ${}$ \\
Assume Hypothesis \ref{h2.1}. Then the following alternative holds: Either \\[1mm] 
$(i)$ for every $z\in\bbC$, all solutions $u$ of $(\tau-z)u=0$ are in $\Lr$ near $b$ 
$($resp., near $a$$)$, \\[1mm] 
or, \\[1mm]
$(ii)$  for every $z\in\bbC$, there exists at least one solution $u$ of $(\tau-z)u=0$ which is not in $\Lr$ near $b$ $($resp., near $a$$)$. In this case, for each $z\in\bbC\bs\bbR$, there exists precisely one solution $u_b$ $($resp., $u_a$$)$ of $(\tau-z)u=0$ $($up to constant multiples$)$ which lies in $\Lr$ near $b$ $($resp., near $a$$)$. 
\end{theorem}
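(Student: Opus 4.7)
The plan is to implement the classical Weyl nested-disk construction at the endpoint $b$ (the argument at $a$ is symmetric). Fix a reference point $c \in (a,b)$ and, for each $z \in \bbC$, select a fundamental system $\{\phi(z,\cdot), \theta(z,\cdot)\}$ of $(\tau - z)u = 0$ by imposing $z$-independent initial data at $c$ so that $W(\theta,\phi) \equiv 1$. For $z \in \bbC_+$, each $\beta \in (c,b)$, and every real parameter encoding a self-adjoint boundary condition at $\beta$, there is a unique $m = m_\beta(z) \in \bbC$ such that $\psi_{\beta}(z,\cdot) := \theta(z,\cdot) + m\phi(z,\cdot)$ satisfies the chosen real boundary condition at $\beta$; as this parameter varies, $m_\beta(z)$ traces a circle $C_\beta(z) \subset \bbC$.

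A direct application of Green's identity to $\psi_\beta$ on $(c,\beta)$ yields
\begin{equation*}
\int_c^\beta |\psi_\beta(z,x)|^2 r(x)\,dx = \frac{\Im(m)}{\Im(z)} \quad \text{for } m \in C_\beta(z),
\end{equation*}
with the closed disk $D_\beta(z)$ bounded by $C_\beta(z)$ characterized by replacing equality with $\le$. Elementary estimates show that $D_\beta(z)$ is a bounded set in $\bbC$ of radius $\bigl(2 |\Im(z)| \int_c^\beta |\phi(z,x)|^2 r(x)\,dx\bigr)^{-1}$, and a comparison of the defining inequalities forces $D_{\beta'}(z) \subseteq D_\beta(z)$ whenever $c < \beta < \beta'$. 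Thus $\{D_\beta(z)\}_{\beta \uparrow b}$ is a nested family of nonempty compact sets with nonempty intersection. Any $m_\infty \in \bigcap_\beta D_\beta(z)$ produces a solution $\theta(z,\cdot) + m_\infty \phi(z,\cdot) \in \Lr$ near $b$, by monotone passage to the limit in the $L^2$-bound; if the intersection is a single point, precisely one solution (up to scalar multiples) lies in $\Lr$ near $b$, while if the intersection is a genuine closed disk, two distinct choices of $m_\infty$ furnish two linearly independent $L^2$ solutions, whence all solutions are in $\Lr$ near $b$.

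To upgrade this to the full alternative I must show the dichotomy is independent of $z$. Using variation of parameters, any solution at spectral parameter $z_1$ may be written in terms of the fundamental system at a fixed $z_0$ through a Volterra integral equation whose inhomogeneity is proportional to $z_1 - z_0$. If all solutions at $z_0$ are $L^2$ near $b$, then combining Cauchy--Schwarz with a Gronwall-type estimate on a sufficiently small right-neighborhood $(\beta_0,b)$ propagates the $L^2$ property to all solutions at $z_1$, uniformly in $z_1$ on compact subsets of $\bbC$. Consequently, case $(i)$ holds either for all $z \in \bbC$ or for none, which together with the preceding paragraph establishes the alternative and the existence statement for $z \in \bbC \setminus \bbR$ in case $(ii)$.

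I expect the main obstacle to be the careful bookkeeping in the Weyl-circle computation: verifying that $D_\beta(z)$ is a genuine closed disk with the claimed radius and that the family is properly nested requires a nontrivial calculation tracking a fractional linear image of the real line under the boundary parameter. The $z$-invariance step is slightly delicate because the Volterra iteration must be arranged on a right-neighborhood of $b$ small enough that the kernel norm is under control, after which the $L^2$ conclusion is transferred back to $(a,b)$ by continuous dependence on a compact interval away from $b$. All of this is classical and carried out in detail in \cite{GLN20} and \cite[Ch.~13]{GNZ23}, which the section explicitly follows.
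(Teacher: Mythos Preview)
The paper does not actually prove Theorem \ref{t2.3}; it is stated as background material in Section \ref{s2}, and the paper explicitly defers all proofs in that section to \cite{GLN20} and \cite[Ch.~13]{GNZ23}. Your outline is the classical Weyl nested-disk argument, which is precisely the approach those references use, so your proposal is consistent with what the paper is citing.
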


This yields the limit circle/limit point classification of $\tau$ at an interval endpoint and links self-adjointness of $T_{min}$ (resp., $T_{max}$) and the limit point property of $\tau$ at both endpoints as follows. 

\begin{definition} \lb{d2.4} 
Assume Hypothesis \ref{h2.1}. \\[1mm]  
In case $(i)$ in Theorem \ref{t2.3}, $\tau$ is said to be in the \textit{limit circle case} at $b$ $($resp., at $a$$)$. $($Frequently, $\tau$ is then called \textit{quasi-regular} at $b$ $($resp., $a$$)$.$)$
\\[1mm] 
In case $(ii)$ in Theorem \ref{t2.3}, $\tau$ is said to be in the \textit{limit point case} at $b$ $($resp., at $a$$)$. \\[1mm]
If $\tau$ is in the limit circle case at $a$ and $b$ then $\tau$ is also called \textit{quasi-regular} on $(a,b)$. 
\end{definition}

\begin{theorem} \lb{t2.5}
Assume Hypothesis~\ref{h2.1}, then the following items $(i)$ and $(ii)$ hold: \\[1mm] 
$(i)$ If $\tau$ is in the limit point case at $a$ $($resp., $b$$)$, then 
\begin{equation} 
W(f,g)(a)=0 \, \text{$($resp., $W(f,g)(b)=0$$)$ for all $f,g\in\dom(T_{max})$.} 
\end{equation} 
$(ii)$ Let $T_{min}=\ol{\dot T_{min} }$. Then
\begin{align}
\begin{split}
n_\pm(T_{min}) &= \dim(\ker(T_{max} \mp i I))    \\
& = \begin{cases}
2 & \text{if $\tau$ is in the limit circle case at $a$ and $b$,}\\
1 & \text{if $\tau$ is in the limit circle case at $a$} \\
& \text{and in the limit point case at $b$, or vice versa,}\\
0 & \text{if $\tau$ is in the limit point case at $a$ and $b$}.
\end{cases}
\end{split} 
\end{align}
In particular, $T_{min} = T_{max}$ is self-adjoint $($i.e., $\dot T_{min,}$ is essentially self-adjoint\,$)$ if and only if $\tau$ is in the limit point case at $a$ and $b$. 
\end{theorem}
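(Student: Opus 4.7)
First, I would establish that the Wronskian limits at both endpoints exist for $f, g \in \dom(T_{max})$. A short product-rule calculation (using $f^{[1]} = p f'$) verifies the Lagrange identity $r\bigl[(\tau f) g - f (\tau g)\bigr] = W(f,g)'$. Integrating over $[c, d] \subset (a,b)$,
\[
W(f,g)(d) - W(f,g)(c) = \int_c^d r \bigl[(\tau f) g - f (\tau g)\bigr] dx,
\]
and since $f, g, \tau f, \tau g \in L^2((a,b); r \, dx)$, Cauchy--Schwarz puts the integrand in $L^1((a,b); r \, dx)$. Thus the one-sided limits as $c \to a^+$ and $d \to b^-$ exist in $\bbC$, defining the endpoint Wronskian values unambiguously.

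\textbf{Proving (i).} Assume $\tau$ is in the limit point case at $b$ (the $a$-case is symmetric). I would pick $c \in (a,b)$ and a smooth cutoff $\chi$ with $\chi \equiv 0$ on a right neighborhood of $c$ and $\chi \equiv 1$ on a left neighborhood of $b$. For any $f, g \in \dom(T_{max})$, the truncations $\wti f := \chi f$ and $\wti g := \chi g$ restricted to $(c,b)$ lie in $\dom(T_{max,(c,b)})$; the only non-trivial point is that $\tau(\chi f) \in L^2((c,b); r\, dx)$, which follows by expanding $\tau(\chi f) = \chi (\tau f)$ plus a commutator term whose support lies where $\chi' \neq 0$ (compact in $(c,b)$, hence bounded and $L^2$). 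On the half-interval $(c,b)$ the endpoint $c$ is regular and $b$ is limit point by hypothesis, so the associated minimal operator has deficiency indices $(1,1)$ (one contribution from the regular endpoint and none from the limit point endpoint, via Theorem \ref{t2.3} and the standard half-interval deficiency count), and any self-adjoint extension $A$ is prescribed by a separated boundary condition at $c$. Since $\wti f, \wti g$ vanish near $c$, they satisfy every such boundary condition and belong to $\dom(A)$. Self-adjointness of $A$ then forces (through the Lagrange identity) $W(\wti f, \ol{\wti g})(b) - W(\wti f, \ol{\wti g})(c) = 0$, the term at $c$ vanishes trivially, so $W(\wti f, \ol{\wti g})(b) = 0$, and since $\wti f = f, \wti g = g$ near $b$, we obtain $W(f, \ol g)(b) = 0$. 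Replacing $g$ by $\ol g \in \dom(T_{max})$ (the domain is closed under conjugation since $p, q, r$ are real) yields the stated $W(f, g)(b) = 0$.

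\textbf{Proving (ii).} By the von Neumann identification $n_\pm(T_{min}) = \dim \ker(T_{max} \mp iI)$ and the fact that $\ker(T_{max} \mp iI)$ consists exactly of $L^2((a,b); r\, dx)$-solutions of $(\tau \mp i) u = 0$, the deficiency index equals the number of solutions in the (two-dimensional) solution space that are $L^2$ near both endpoints simultaneously. Weyl's alternative (Theorem \ref{t2.3}) delivers the three cases: limit circle at both endpoints gives $n_\pm = 2$, limit circle at one endpoint and limit point at the other gives $n_\pm = 1$ (the unique $L^2$-near-LP-endpoint solution is automatically $L^2$ at the LC-endpoint). For limit point at both endpoints the naive count only gives $n_\pm \leq 1$, and I would upgrade this to $n_\pm = 0$ using (i): if $u \in \ker(T_{max} - iI)$ were non-zero, then $\tau \ol u = -i \ol u$ and the Lagrange identity gives
\[
2 i \norm{u}^2 = \int_a^b r \bigl[(\tau u) \ol u - u (\ol{\tau u})\bigr] dx = W(u, \ol u)(b) - W(u, \ol u)(a),
\]
with the right-hand side equal to zero by (i) applied at both endpoints, forcing $u = 0$. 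The final claim about essential self-adjointness of $\dot T_{min}$ is then the standard von Neumann criterion $n_\pm = 0$.

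\textbf{Main obstacle.} The delicate step is (i): the cutoff argument requires care to verify $\chi f \in \dom(T_{max,(c,b)})$ via the commutator computation, and one must ensure the invocation of self-adjoint extensions on the half-interval does not smuggle in what Theorem \ref{t2.5} itself asserts. This is safe because Weyl's alternative and the half-interval deficiency count it produces are established prior to, and independently of, the present theorem. Once (i) is in hand, (ii) is straightforward dimension counting in the two-dimensional solution space, with the limit-point-at-both-endpoints case being the only place where (i) is actually invoked.
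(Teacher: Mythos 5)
The paper states Theorem \ref{t2.5} without proof (it belongs to the surveyed material, deferred to \cite{GLN20} and \cite[Ch.~13]{GNZ23}), so there is no in-paper argument to compare against. Your overall strategy is the standard one: the Lagrange identity plus Cauchy--Schwarz for the existence of the endpoint Wronskian limits, reduction of $(i)$ to a half-interval with one regular endpoint, and Weyl's alternative combined with von Neumann's formula for the count in $(ii)$. Your part $(ii)$ is correct as written, including the upgrade from $n_\pm \leq 1$ to $n_\pm = 0$ in the doubly limit point case via $2i\|u\|^2 = W(u,\ol u)(b) - W(u,\ol u)(a) = 0$.

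There is, however, a genuine circularity in your proof of $(i)$ at the step ``$\wti f, \wti g$ vanish near $c$, hence satisfy every separated boundary condition at $c$ and belong to $\dom(A)$.'' To conclude membership in $\dom(A)$ you must already know that $\dom(A)$ is cut out from $\dom(T_{max,(c,b)})$ by a boundary condition at $c$ alone; but that description of the self-adjoint extensions when $b$ is in the limit point case (Theorem \ref{t2.6} together with Remark \ref{r2.7}\,$(i)$ in the paper's ordering) is itself derived from the vanishing of the boundary form at $b$, i.e., from Theorem \ref{t2.5}\,$(i)$. Your safety check certifies only the deficiency count $n_\pm(T_{min,(c,b)})=1$, which indeed follows from Weyl's alternative, but the count alone does not exclude a one-dimensional extension whose defining condition involves behavior at $b$. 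The standard repair avoids self-adjoint extensions altogether: since $\dim\big(\dom(T_{max,(c,b)})/\dom(T_{min,(c,b)})\big)=n_++n_-=2$, and since two functions $w_1,w_2\in\dom(T_{max,(c,b)})$ vanishing near $b$ with $W(w_1,w_2)(c)=1$ are linearly independent modulo $\dom(T_{min,(c,b)})$ (test with $W(w_j,\dott)(c)$, which annihilates the minimal domain by \eqref{2.5}), their classes span the quotient; writing $\wti f = f_0 + c_1 w_1 + c_2 w_2$ and evaluating Wronskians at $c$ forces $c_1=c_2=0$ because $\wti f$ vanishes near $c$, so $\wti f\in\dom(T_{min,(c,b)})$ and $W(\wti f,h)(b)=0$ for every $h\in\dom(T_{max,(c,b)})$ directly from \eqref{2.5}. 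A smaller technical point: your smooth cutoff produces a commutator term containing $(p\chi'f)'$, which is awkward when $p$ is merely measurable with $1/p\in L^1_{loc}$; the usual device is Naimark's patching lemma (prescribing $h$ and $h^{[1]}$ at the two ends of a compact subinterval) rather than multiplication by a smooth $\chi$.
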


One now recalls the following fundamental result characterizing all self-adjoint extensions of $T_{min}$:

\begin{theorem} \lb{t2.6}
Assume Hypothesis \ref{h2.1} and that $\tau$ is in the limit circle case at $a$ and $b$ $($i.e., $\tau$ is quasi-regular on $(a,b)$$)$. In addition, assume that 
$v_j \in \dom(T_{max})$, $j=1,2$, satisfy 
\begin{equation}
W(\ol{v_1}, v_2)(a) = W(\ol{v_1}, v_2)(b) = 1, \quad W(\ol{v_j}, v_j)(a) = W(\ol{v_j}, v_j)(b) = 0, \; j= 1,2.  
\end{equation}
$($E.g., real-valued solutions $v_j$, $j=1,2$, of $(\tau - \lambda) u = 0$ with $\lambda \in \bbR$, such that 
$W(v_1,v_2) = 1$.$)$ For $g\in\dom(T_{max})$ we introduce the generalized boundary values 
\begin{align}
\begin{split} 
\wti g_1(a) &= - W(v_2, g)(a), \quad \wti g_1(b) = - W(v_2, g)(b),    \\
\wti g_2(a) &= W(v_1, g)(a), \quad \;\,\,\, \wti g_2(b) = W(v_1, g)(b).   \lb{2.10}
\end{split} 
\end{align}
Then the following items $(i)$--$(iii)$ hold: \\[1mm]
$(i)$ All self-adjoint extensions $T_{\gamma,\delta}$ of $T_{min}$ with separated boundary conditions are of the form
\begin{align}
& T_{\gamma,\delta} f = \tau f, \quad \gamma,\delta \in[0,\pi),   \no \\
& f \in \dom(T_{\gamma,\delta})=\big\{g\in\dom(T_{max}) \, \big| \, \wti g_1(a)\cos(\gamma)+ \wti g_2(a)\sin(\gamma)=0;   \lb{2.11} \\ 
& \hspace*{5.5cm} \, \wti g_1(b)\cos(\delta)+ \wti g_2(b)\sin(\delta) = 0 \big\}.    \no 
\end{align}
$(ii)$ All self-adjoint extensions $T_{\eta,R}$ of $T_{min}$ with coupled boundary conditions are of the type
\begin{align}
\begin{split} 
& T_{\eta,R} f = \tau f,    \\
& f \in \dom(T_{\eta,R})=\bigg\{g\in\dom(T_{max}) \, \bigg| \begin{pmatrix} \wti g_1(b)\\ \wti g_2(b)\end{pmatrix} 
= e^{i\eta}R \begin{pmatrix}
\wti g_1(a)\\ \wti g_2(a)\end{pmatrix} \bigg\}, \lb{2.11A}
\end{split}
\end{align}
where $\eta\in[0,\pi)$, and $R$ is a real $2\times2$ matrix with $\det(R)=1$ 
$($i.e., $R \in SL(2,\bbR)$$)$.  \\[1mm] 
$(iii)$ Every self-adjoint extension of $T_{min}$ is either of type $(i)$ $($i.e., separated\,$)$ or of type 
$(ii)$ $($i.e., coupled\,$)$.
\end{theorem}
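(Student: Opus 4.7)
The plan is to apply the Glazman--Krein--Naimark (GKN) classification: since $T_{min}$ has deficiency indices $(2,2)$ by Theorem \ref{t2.5}, its self-adjoint extensions are in bijection with the Lagrangian (maximal isotropic) subspaces of the quotient $\dom(T_{max})/\dom(T_{min})$ equipped with the skew-Hermitian form coming from the Lagrange identity. The first task is to rewrite this form in the coordinates \eqref{2.10}. The Lagrange identity gives
$\langle T_{max} f, g\rangle - \langle f, T_{max} g\rangle = W(\bar f, g)(b) - W(\bar f, g)(a)$ for $f,g \in \dom(T_{max})$. Since $W(h, \cdot)(a) = W(h, \cdot)(b) = 0$ for $h \in \dom(T_{min})$ by \eqref{2.5}, the generalized boundary values $\wti f_j(c), \wti g_j(c)$ descend to $\dom(T_{max})/\dom(T_{min})$; representing $f, g$ near each endpoint $c$ modulo $\dom(T_{min})$ as linear combinations of $v_1, v_2$ and using the normalizations $W(\bar v_1, v_2)(c) = 1$ and $W(\bar v_j, v_j)(c) = 0$, a direct calculation gives
\begin{equation*}
W(\bar f, g)(c) = \overline{\wti f_1(c)}\, \wti g_2(c) - \overline{\wti f_2(c)}\, \wti g_1(c), \quad c \in \{a, b\}.
\end{equation*}

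Next I would verify that the boundary map $g \mapsto (\wti g_1(a), \wti g_2(a), \wti g_1(b), \wti g_2(b))$ is a surjection $\dom(T_{max}) \twoheadrightarrow \bbC^4$ with kernel exactly $\dom(T_{min})$. Surjectivity follows from a cutoff construction: for any prescribed boundary data, one builds an element of $\dom(T_{max})$ by truncating appropriate linear combinations of $v_1, v_2$ near each endpoint and joining them smoothly in the interior. The kernel claim follows from the Wronskian identity above and \eqref{2.5}: vanishing of all four boundary values of $g$ forces $W(\bar h, g)(c) = 0$ for all $h \in \dom(T_{max})$ and $c \in \{a, b\}$, whence (since $\dom(T_{max})$ is conjugation-invariant) $g \in \dom(T_{min})$. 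Because $\dim_\bbC \dom(T_{max})/\dom(T_{min}) = n_+ + n_- = 4$ by Theorem \ref{t2.5}, this identifies the quotient with $\bbC^4 = \bbC^2_a \oplus \bbC^2_b$. By GKN, the self-adjoint extensions of $T_{min}$ correspond bijectively to the Lagrangian (complex-dimension-$2$) subspaces $L \subset \bbC^4$ with respect to the form read off from Step 1.

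To finish, set $L_c = L \cap \bbC^2_c$; a rank-nullity argument using nondegeneracy of the form on each $\bbC^2_c$ forces either $L_a = L_b = \{0\}$, or else $\dim L_a = \dim L_b = 1$ with $L = L_a \oplus L_b$. In the latter case each $L_c$ is a one-dimensional isotropic subspace of $\bbC^2$; the isotropy condition $\alpha \bar\beta - \bar\alpha \beta = 0$ forces the spanning direction to be real up to a global phase, yielding $L_c = \{(\alpha, \beta) : \alpha \cos(\gamma_c) + \beta \sin(\gamma_c) = 0\}$ for a unique $\gamma_c \in [0, \pi)$; this is the separated case \eqref{2.11}. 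In the former case both projections $L \to \bbC^2_c$ are bijective, so $L$ is the graph of a linear isomorphism $M : \bbC^2_a \to \bbC^2_b$, and the Lagrangian condition becomes $M^* J M = J$ with $J = \begin{pmatrix} 0 & 1 \\ -1 & 0 \end{pmatrix}$. This is equivalent to $M^* = (\det M)^{-1} M^T$; taking moduli yields $|\det M| = 1$, and writing $\det M = e^{2i\eta}$ shows that $e^{-i\eta} M$ is a real matrix $R$ with $\det R = 1$, so $M = e^{i\eta} R$ with $R \in SL(2,\bbR)$ and $\eta \in [0,\pi)$ (after absorbing $-1$ into $R$). This proves (ii), and since the two cases are mutually exclusive and exhaustive, (iii) follows at once.

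The main obstacle is the reformulation of the Wronskian in Step 1 together with the surjectivity in Step 2: once the four generalized boundary values are shown to coordinatize $\dom(T_{max})/\dom(T_{min})$ in such a way that the boundary form takes the standard symplectic shape on $\bbC^2 \oplus \bbC^2$, the GKN correspondence plus the case split reduce the remainder of the theorem to elementary linear symplectic algebra over $\bbC$.
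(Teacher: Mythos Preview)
The paper does not prove Theorem \ref{t2.6}; Section \ref{s2} is explicitly a survey section (``One now recalls the following fundamental result\ldots'') and the authors state at its close that all results there are taken from \cite{GLN20} and \cite[Ch.~13]{GNZ23}. So there is no in-paper proof to compare against.

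Your GKN/symplectic argument is the standard route used in those references and is correct. The identification of the boundary form in the coordinates \eqref{2.10} is exactly the content of \eqref{2.15} in Remark \ref{r2.7}\,$(iii)$, and the hypotheses $W(\ol{v_1},v_2)(c)=1$, $W(\ol{v_j},v_j)(c)=0$ are precisely what is needed so that the map $g\mapsto(\wti g_1(a),\wti g_2(a),\wti g_1(b),\wti g_2(b))$ carries the sesquilinear Lagrange form to the standard one on $\bbC^2\oplus\bbC^2$. Your dichotomy on $L_c=L\cap\bbC^2_c$ and the reduction of the coupled case via $M^*JM=J\Rightarrow M=e^{i\eta}R$, $R\in SL(2,\bbR)$, are both accurate; the only cosmetic point is that the passage $\wti{(\ol f)}_j(c)=\ol{\wti f_j(c)}$ uses the Wronskian normalizations on the $v_j$ (not merely ``representing $f$ as a combination of $v_1,v_2$''), which you might make explicit.
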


\begin{remark} \lb{r2.7}
$(i)$ If $\tau$ is in the limit point case at one endpoint, say, at the endpoint $b$, one omits the corresponding boundary condition involving $\delta \in [0, \pi)$ at $b$ in \eqref{2.11} to obtain all self-adjoint extensions $T_{\gamma}$ of 
$T_{min}$, indexed by $\gamma \in [0, \pi)$. In the case where $\tau$ is in the limit point case at both endpoints, all boundary values and boundary conditions become superfluous as in this case $T_{min} = T_{max}$ is self-adjoint. \\[1mm] 
$(ii)$ Assume the special case where $\tau$ is regular on the finite interval $[a,b]$, that is, suppose that Hypothesis \ref{h2.1} is replaced by the more stringent set of assumptions: \\[1mm] 
{\bf Hypothesis} ($\tau$ regular on $[a,b]$.) \\[1mm] 
Let $(a,b) \subset \bbR$ be a finite interval and suppose that $p,q,r$ are $($Lebesgue\,$)$ measurable functions on $(a,b)$  
such that the following items $(i)$--$(iii)$ hold: \\[1mm] 
$(i')$ $r > 0$ a.e.~on $(a,b)$, $r \in L^1((a,b);dx)$. \\[1mm]
$(ii')$ $p > 0$ a.e.~on $(a,b)$, $1/p \in L^1((a,b);dx)$. \\[1mm]
$(iii')$ $q$ is real-valued a.e.~on $(a,b)$, $q \in L^1((a,b);dx)$.  \\[1mm] 
\indent 
In this case one chooses $v_j \in \dom(T_{max})$, $j=1,2$, 
such that 
\begin{align}
v_1(x) = \begin{cases} \theta_0(\lambda,x,a), & \text{for $x$ near a}, \\
\theta_0(\lambda,x,b), & \text{for $x$ near b},  \end{cases}   \quad 
v_2(x) = \begin{cases} \phi_0(\lambda,x,a), & \text{for $x$ near a}, \\
\phi_0(\lambda,x,b), & \text{for $x$ near b},  \end{cases}   \lb{2.12}
\end{align} 
where $\phi_0(\lambda,\, \cdot \,,d)$, $\theta_0(\lambda,\, \cdot \,,d)$, $d \in \{a,b\}$, are real-valued solutions of $(\tau - \lambda) u = 0$, $\lambda \in \bbR$, satisfying the boundary conditions 
\begin{align}
\begin{split} 
& \phi_0(\lambda,a,a) = \theta_0^{[1]}(\lambda,a,a) = 0, \quad \theta_0(\lambda,a,a) = \phi_0^{[1]}(\lambda,a,a) = 1, \\ 
& \phi_0(\lambda,b,b) = \theta_0^{[1]}(\lambda,b,b) = 0, \quad \; \theta_0(\lambda,b,b) = \phi_0^{[1]}(\lambda,b,b) = 1. 
\lb{2.13}
\end{split} 
\end{align} 
Then one verifies that
\begin{align}
\wti g_1 (a) = g(a), \quad \wti g_1 (b) = g(b), \quad \wti g_2 (a) = g^{[1]}(a), \quad \wti g_2 (b) = g^{[1]}(b),   \lb{2.14} 
\end{align}
and hence Theorem \ref{t2.6} in the special regular case recovers the well-known situation of separated self-adjoint boundary conditions for three-coefficient regular Sturm--Liouville operators in $\Lr$. 
\\[1mm]
$(iii)$ In connection with \eqref{2.10}, an explicit calculation demonstrates that for $g, h \in \dom(T_{max})$,
\begin{equation}
W(g,h)(d) = \wti g_1(d) \wti h_2(d) - \wti g_2(d) \wti h_1(d), \quad d \in \{a,b\},   \lb{2.15}
\end{equation} 
where either side in \eqref{2.15} is finite for $d \in \{a, b\}$.  
Of course, for \eqref{2.15} to hold at $d \in \{a,b\}$, it suffices that $g$ and $h$ lie locally in $\dom(T_{max})$ near $x=d$. \\[1mm]
$(iv)$ Clearly, $\wti g_1, \wti g_2$ depend on the choice of $v_j$, $j=1,2$, and a more precise notation would indicate this as $\wti g_{1,v_2}, \wti g_{2,v_1}$, etc. \hfill $\diamond$
\end{remark} 

In the special case where $T_{min}$ is bounded from below, one can further analyze the generalized boundary values  \eqref{2.10} in the singular context by invoking principal and nonprincipal solutions of $\tau u = \lambda u$ for appropriate $\lambda \in \bbR$. This leads to natural analogs of \eqref{2.14} also in the singular case, and we will turn to this topic next. 

We start by reviewing some oscillation theory with particular emphasis on principal and nonprincipal solutions, a notion originally due to Leighton and Morse \cite{LM36}, Rellich \cite{Re43}, \cite{Re51}, and Hartman and Wintner \cite[Appendix]{HW55} (see also \cite{CGN16}, \cite[Sects.~13.6, 13.9, 13.0]{DS88}, 
\cite[Ch.~XI]{Ha02}, \cite{NZ92}, \cite[Chs.~4, 6--8]{Ze05}). 

\begin{definition} \lb{d2.8}
Assume Hypothesis \ref{h2.1}. \\[1mm] 
$(i)$ Fix $c\in (a,b)$ and $\lambda\in\bbR$. Then $\tau - \lam$ is
called {\it nonoscillatory} at $a$ $($resp., $b$$)$, 
if every real-valued solution $u(\lambda,\dott)$ of 
$\tau u = \lambda u$ has finitely many
zeros in $(a,c)$ $($resp., $(c,b)$$)$. Otherwise, $\tau - \lam$ is called {\it oscillatory}
at $a$ $($resp., $b$$)$. \\[1mm] 
$(ii)$ Let $\lambda_0 \in \bbR$. Then $T_{min}$ is called bounded from below by $\lambda_0$, 
and one writes $T_{min} \geq \lambda_0 I$, if 
\begin{equation} 
(u, [T_{min} - \lambda_0 I]u)_{L^2((a,b);rdx)}\geq 0, \quad u \in \dom(T_{min}).
\end{equation}
\end{definition}

The following is a key result. 

\begin{theorem} \lb{t2.9} 
Assume Hypothesis \ref{h2.1}. Then the following items $(i)$--$(iii)$ are
equivalent: \\[1mm] 
$(i)$ $T_{min}$ $($and hence any symmetric extension of $T_{min})$
is bounded from below. \\[1mm] 
$(ii)$ There exists a $\nu_0\in\bbR$ such that for all $\lambda < \nu_0$, $\tau - \lam$ is
nonoscillatory at $a$ and $b$. \\[1mm]
$(iii)$ For fixed $c, d \in (a,b)$, $c \leq d$, there exists a $\nu_0\in\bbR$ such that for all
$\lambda<\nu_0$, $\tau u = \lambda u$ has $($real-valued\,$)$ nonvanishing solutions
$u_a(\lambda,\dott) \neq 0$,
$\hatt u_a(\lambda,\dott) \neq 0$ in the neighborhood $(a,c]$ of $a$, and $($real-valued\,$)$ nonvanishing solutions
$u_b(\lambda,\dott) \neq 0$, $\hatt u_b(\lambda,\dott) \neq 0$ in the neighborhood $[d,b)$ of
$b$, such that 
\begin{align}
&W(\hatt u_a (\lambda,\dott),u_a (\lambda,\dott)) = 1,
\quad u_a (\lambda,x)=\oh(\hatt u_a (\lambda,x))
\text{ as $x\downarrow a$,} \lb{2.17} \\
&W(\hatt u_b (\lambda,\dott),u_b (\lambda,\dott))\, = 1,
\quad u_b (\lambda,x)\,=\oh(\hatt u_b (\lambda,x))
\text{ as $x\uparrow b$,} \lb{2.18} \\
&\int_a^c dx \, p(x)^{-1}u_a(\lambda,x)^{-2}=\int_d^b dx \, 
p(x)^{-1}u_b(\lambda,x)^{-2}=\infty,  \lb{2.19} \\
&\int_a^c dx \, p(x)^{-1}{\hatt u_a(\lambda,x)}^{-2}<\infty, \quad 
\int_d^b dx \, p(x)^{-1}{\hatt u_b(\lambda,x)}^{-2}<\infty. \lb{2.20}
\end{align}
\end{theorem}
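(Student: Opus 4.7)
\smallskip
\noindent
\emph{Proof proposal.}
The plan is to establish the cyclic chain $(i)\Rightarrow(ii)\Rightarrow(iii)\Rightarrow(i)$, relying on Sturm comparison, reduction of order, and a Picone-type identity. For $(i)\Rightarrow(ii)$: assume $T_{min}\geq\lambda_0 I$ and fix any $\lambda<\lambda_0$. If $\tau-\lambda$ were oscillatory at, say, $a$, a real-valued solution $u$ of $(\tau-\lambda)u=0$ would possess two consecutive zeros $a<x_1<x_2$, and extending $u$ by zero outside $[x_1,x_2]$ would yield a nontrivial element of $\dom(\dot T_{min})$; integration by parts then produces $(u,(T_{min}-\lambda_0)u)_{L^2((a,b);r\,dx)}=(\lambda-\lambda_0)\|u\|^2<0$, contradicting the lower bound. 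The argument at $b$ is identical, so we may take $\nu_0=\lambda_0$.

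For $(ii)\Rightarrow(iii)$: fix $\lambda<\nu_0$. Nonoscillation at $a$ yields, after shrinking $c$, a real-valued solution nonvanishing on $(a,c]$, and reduction of order expresses every other solution as $v(x)[A+F(x)]$ with $F'=p^{-1}v^{-2}$. The classical Leighton--Morse/Hartman--Wintner dichotomy then singles out a principal solution $u_a(\lambda,\,\cdot\,)$, unique up to positive multiples, characterized by $\int_a^c p^{-1}u_a^{-2}\,dx=\infty$; it is obtained as the locally uniform limit (controlled by Sturm's separation theorem) of the normalized solutions vanishing at points $x_n\downarrow a$. Choosing $A$ so that $\hat u_a(x):=u_a(x)\big[A+\int_c^x p(t)^{-1}u_a(\lambda,t)^{-2}\,dt\big]$ stays nonvanishing on a left neighborhood of $a$ and that $W(\hat u_a,u_a)=1$, the identity
\begin{equation*}
p(x)^{-1}\hat u_a(\lambda,x)^{-2}=-\frac{d}{dx}\bigg(\frac{1}{G(x)}\bigg), \qquad G:=\hat u_a/u_a,
\end{equation*}
simultaneously yields $\int_a^c p^{-1}\hat u_a^{-2}\,dx<\infty$ and, via $G(x)\to\pm\infty$ as $x\downarrow a$, the relation $u_a(\lambda,x)=\oh(\hat u_a(\lambda,x))$. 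The construction at $b$ is symmetric.

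For $(iii)\Rightarrow(i)$: using Sturm comparison, one pastes the principal/nonprincipal pairs near $a$ and $b$ into a strictly positive solution $v_\lambda>0$ of $(\tau-\lambda)v=0$ on all of $(a,b)$. Picone's identity applied to $v_\lambda$ and any $u\in\dom(\dot T_{min})$ then gives $(u,(T_{min}-\lambda I)u)_{L^2((a,b);r\,dx)}\geq 0$, i.e., $T_{min}\geq\lambda I$. The main obstacle is the step $(ii)\Rightarrow(iii)$: one must simultaneously prove the integral divergence \eqref{2.19} and the asymptotic \eqref{2.17} for the limit of the $x_n$-vanishing solutions, establish uniqueness of $u_a$ up to positive scalars, and verify that every linearly independent solution is automatically nonprincipal with the complementary integrability \eqref{2.20}. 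The Sturm-based limit $\phi_n\to u_a$ combined with the reduction-of-order identity above is the key computation underpinning all four assertions.
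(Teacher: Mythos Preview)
The paper does not actually prove Theorem~\ref{t2.9}; it is presented as part of the survey in Section~\ref{s2} with the blanket attribution ``All results surveyed in this section can be found in \cite{GLN20} and \cite[Ch.~13]{GNZ23}.'' So there is no in-paper proof to compare against. Your overall architecture---the cyclic chain $(i)\Rightarrow(ii)\Rightarrow(iii)\Rightarrow(i)$ via Sturm comparison, reduction of order, and a Picone-type identity---is indeed the classical route used in those references.

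That said, two steps in your sketch need tightening. In $(i)\Rightarrow(ii)$, the function obtained by extending a solution $u$ with $u(x_1)=u(x_2)=0$ by zero is \emph{not} an element of $\dom(\dot T_{min})$: the quasi-derivative $u^{[1]}$ has jump discontinuities at $x_1,x_2$ (it cannot vanish there, else $u\equiv 0$), so $u^{[1]}\notin AC_{loc}((a,b))$. The fix is routine: work in the form domain of $T_{min}$ and compute the quadratic form $\int_{x_1}^{x_2}(p|u'|^2+q|u|^2)\,dx=\lambda\int_{x_1}^{x_2} r|u|^2\,dx$ directly.

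More seriously, the step $(iii)\Rightarrow(i)$ has a genuine gap. You assert that ``using Sturm comparison, one pastes the principal/nonprincipal pairs near $a$ and $b$ into a strictly positive solution $v_\lambda>0$ of $(\tau-\lambda)v=0$ on all of $(a,b)$.'' But $u_a(\lambda,\cdot)$ and $u_b(\lambda,\cdot)$ are in general \emph{different} solutions, and the extension of $u_a$ to all of $(a,b)$ may well acquire zeros in $(c,b)$; nothing in $(iii)$ for a single $\lambda$ prevents this, and ``pasting'' two solutions of a second-order ODE does not produce a solution. The standard route is rather: $(iii)$ trivially implies $(ii)$ (a nonvanishing solution near an endpoint forces nonoscillation there via Sturm separation), and then $(ii)\Rightarrow(i)$ is obtained by fixing any real solution $u$ of $(\tau-\lambda)u=0$, noting it has only finitely many zeros $x_1<\dots<x_N$ in $(a,b)$, and applying Picone's identity on each nodal interval to get $\mathfrak{t}_\lambda[\phi]\geq 0$ for every $\phi$ in the form domain with $\phi(x_j)=0$, $j=1,\dots,N$. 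Since this subspace has finite codimension, the form---and hence $T_{min}$---is bounded below. If you wish instead to produce a globally positive solution, you must supply an additional Sturm-comparison argument showing that for $\lambda$ sufficiently negative the extended $u_a(\lambda,\cdot)$ loses all its zeros; this is true but is not the one-line ``pasting'' you describe.
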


\begin{definition} \lb{d2.10}
Assume Hypothesis \ref{h2.1}, suppose that $T_{min}$ is bounded from below, and let 
$\lambda\in\bbR$. Then $u_a(\lambda,\dott)$ $($resp., $u_b(\lambda,\dott)$$)$ in Theorem
\ref{t2.9}\,$(iii)$ is called a {\it principal} $($or {\it minimal}\,$)$
solution of $\tau u=\lambda u$ at $a$ $($resp., $b$$)$. A real-valued solution 
$\wti{\wti u}_a(\lambda,\dott)$ $($resp., $\wti{\wti u}_b(\lambda,\dott)$$)$ of $\tau
u=\lambda u$ linearly independent of $u_a(\lambda,\dott)$ $($resp.,
$u_b(\lambda,\dott)$$)$ is called {\it nonprincipal} at $a$ $($resp., $b$$)$.
\end{definition}

Principal and nonprincipal solutions are well-defined due to Lemma \ref{l2.11} below. 

\begin{lemma} \lb{l2.11} Assume Hypothesis \ref{h2.1} and suppose that $T_{min}$ is bounded 
from below. Then $u_a(\lambda,\dott)$ and $u_b(\lambda,\dott)$ in Theorem
\ref{t2.9}\,$(iii)$ are unique up to $($nonvanish- ing\,$)$ real constant multiples. Moreover,
$u_a(\lambda,\dott)$ and $u_b(\lambda,\dott)$ are minimal solutions of
$\tau u=\lambda u$ in the sense that 
\begin{align}
u(\lambda,x)^{-1} u_a(\lambda,x)&=\oh(1) \text{ as $x\downarrow a$,} 
\lb{2.21} \\ 
u(\lambda,x)^{-1} u_b(\lambda,x)&=\oh(1) \text{ as $x\uparrow b$,} \lb{2.22}
\end{align}
for any other solution $u(\lambda,\dott)$ of $\tau u=\lambda u$
$($which is nonvanishing near $a$, resp., $b$$)$ with
$W(u_a(\lambda,\dott),u(\lambda,\dott))\neq 0$, respectively, 
$W(u_b(\lambda,\dott),u(\lambda,\dott))\neq 0$. 
\end{lemma}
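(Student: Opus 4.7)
The plan is to derive both assertions from a single reduction-of-order analysis combined with the divergence property in \eqref{2.19}; I treat the endpoint $a$, the argument at $b$ being symmetric using \eqref{2.18} and \eqref{2.20}. Choose $c\in(a,b)$ so that the principal solution $u_a(\lambda,\dott)$ provided by Theorem \ref{t2.9}\,$(iii)$ is nonvanishing on $(a,c]$, and introduce the reduction-of-order companion
\begin{equation*}
v(\lambda,x) := u_a(\lambda,x) \int_x^c \f{dt}{p(t)\, u_a(\lambda,t)^2}, \quad x\in(a,c].
\end{equation*}
A direct computation shows that $\tau v=\lam v$ and $W(u_a(\lambda,\dott),v(\lambda,\dott))=-1$, so $\{u_a(\lambda,\dott),v(\lambda,\dott)\}$ is a fundamental system of $\tau u=\lam u$ on $(a,c]$.

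For the minimality claim \eqref{2.21}, let $u(\lambda,\dott)$ be any solution of $\tau u=\lam u$ with $W(u_a(\lambda,\dott),u(\lambda,\dott))\neq 0$. Expanding in the fundamental system just constructed,
\begin{equation*}
u(\lambda,x) = \al\, u_a(\lambda,x) + \be\, v(\lambda,x), \quad \be = -W(u_a(\lambda,\dott),u(\lambda,\dott)) \neq 0,
\end{equation*}
gives
\begin{equation*}
\f{u_a(\lambda,x)}{u(\lambda,x)} = \left( \al + \be \int_x^c \f{dt}{p(t)\, u_a(\lambda,t)^2} \right)^{-1}, \quad x\in(a,c].
\end{equation*}
The divergence asserted in \eqref{2.19} forces the bracketed expression to tend to $\pm\infty$ as $x\downarrow a$, which both proves \eqref{2.21} and confirms that $u(\lambda,\dott)$ is automatically nonvanishing near $a$, so the ratio is well defined there.

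For uniqueness, suppose $u_a(\lambda,\dott)$ and a second candidate $\wti u_a(\lambda,\dott)$ both satisfy the defining conditions of Theorem \ref{t2.9}\,$(iii)$ at the endpoint $a$. If their Wronskian were nonzero, the minimality statement just proved, applied symmetrically to each of them as the distinguished principal solution in turn, would give
\begin{equation*}
\f{u_a(\lambda,x)}{\wti u_a(\lambda,x)} = \oh(1) \ \text{ and } \ \f{\wti u_a(\lambda,x)}{u_a(\lambda,x)} = \oh(1) \ \text{ as } x\downarrow a,
\end{equation*}
which is impossible since these two ratios are reciprocals of each other. Hence $W(u_a(\lambda,\dott),\wti u_a(\lambda,\dott))=0$, so $\wti u_a(\lambda,\dott)=\k\, u_a(\lambda,\dott)$ on the common neighborhood of $a$ where both are nonvanishing; reality and nontriviality of both solutions force $\k\in\bbR\bs\{0\}$.

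The main bookkeeping obstacle is verifying the reduction-of-order Wronskian identity for the three-coefficient expression $\tau$ with quasi-derivative $y^{[1]}=py'$, and confirming that the divergent integral in \eqref{2.19} renders the bracketed denominator of eventually constant sign (hence unbounded in absolute value) as $x\downarrow a$; otherwise the argument is mechanical.
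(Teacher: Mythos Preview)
The paper does not actually include a proof of Lemma \ref{l2.11}; it is stated as part of the background survey in Section \ref{s2}, with the blanket reference ``All results surveyed in this section can be found in \cite{GLN20} and \cite[Ch.~13]{GNZ23}.'' Your argument is correct and is precisely the standard one found in those sources (and going back to \cite{Ha02}, \cite{NZ92}): the reduction-of-order companion $v(\lambda,x)=u_a(\lambda,x)\int_x^c p(t)^{-1}u_a(\lambda,t)^{-2}\,dt$ together with the divergence hypothesis \eqref{2.19} forces $u_a/u\to 0$ for any linearly independent $u$, and the uniqueness then follows from the reciprocal-ratio contradiction you give.
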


Given these oscillation theoretic preparations, one can now revisit and complement Theorem \ref{t2.6} as follows: 

\begin{theorem} \lb{t2.12}
Assume Hypothesis \ref{h2.1} and that $\tau$ is in the limit circle case at $a$ and $b$ $($i.e., $\tau$ is quasi-regular 
on $(a,b)$$)$. In addition, assume that $T_{min} \geq \lambda_0 I$ for some $\lambda_0 \in \bbR$, and denote by 
$u_a(\lambda_0, \dott)$ and $\hatt u_a(\lambda_0, \dott)$ $($resp., $u_b(\lambda_0, \dott)$ and 
$\hatt u_b(\lambda_0, \dott)$$)$ principal and nonprincipal solutions of $\tau u = \lambda_0 u$ at $a$ 
$($resp., $b$$)$, satisfying
\begin{equation}
W(\hatt u_a(\lambda_0,\dott), u_a(\lambda_0,\dott)) = W(\hatt u_b(\lambda_0,\dott), u_b(\lambda_0,\dott)) = 1.  
\lb{2.23} 
\end{equation}
Introducing $v_j \in \dom(T_{max})$, $j=1,2$, via 
\begin{align}
v_1(x) = \begin{cases} \hatt u_a(\lambda_0,x), & \text{for $x$ near a}, \\
\hatt u_b(\lambda_0,x), & \text{for $x$ near b},  \end{cases}   \quad 
v_2(x) = \begin{cases} u_a(\lambda_0,x), & \text{for $x$ near a}, \\
u_b(\lambda_0,x), & \text{for $x$ near b},  \end{cases}   \lb{2.24}
\end{align} 
one obtains for all $g \in \dom(T_{max})$, 
\begin{align}
\begin{split} 
\wti g(a) &= - W(v_2, g)(a) = \wti g_1(a) =  - W(u_a(\lambda_0,\dott), g)(a)    \\
&= \lim_{x \downarrow a} \f{g(x)}{\hatt u_a(\lambda_0,x)},    \\
\wti g(b) &= - W(v_2, g)(b) = \wti g_1(b) =  - W(u_b(\lambda_0,\dott), g)(b)   \\
&= \lim_{x \uparrow b} \f{g(x)}{\hatt u_b(\lambda_0,x)},    
\lb{2.25} 
\end{split} \\
\begin{split} 
{\wti g}^{\, \prime}(a) &= W(v_1, g)(a) = \wti g_2(a) = W(\hatt u_a(\lambda_0,\dott), g)(a)   \\
&= \lim_{x \downarrow a} \f{g(x) - \wti g(a) \hatt u_a(\lambda_0,x)}{u_a(\lambda_0,x)},    \\ 
{\wti g}^{\, \prime}(b) &= W(v_1, g)(b) = \wti g_2(b) = W(\hatt u_b(\lambda_0,\dott), g)(b)   \\ 
&= \lim_{x \uparrow b} \f{g(x) - \wti g(b) \hatt u_b(\lambda_0,x)}{u_b(\lambda_0,x)}.    \lb{2.26}
\end{split} 
\end{align}
In particular, the limits on the right-hand sides in \eqref{2.25}, \eqref{2.26} exist and all self-adjoint extensions $T_{\gamma,\delta}$ of $T_{min}$ with separated boundary conditions are of the form
\begin{align}
& T_{\gamma,\delta} f = \tau f, \quad \gamma,\delta \in[0,\pi),   \no \\
& f \in \dom(T_{\gamma,\delta})=\big\{g\in\dom(T_{max}) \, \big| \, \sin(\gamma) {\wti g}^{\, \prime}(a) + \cos(\gamma) \wti g(a) = 0;   \lb{2.27} \\ 
& \hspace*{5.5cm} \,  \sin(\delta) {\wti g}^{\, \prime}(b) + \cos(\delta) \wti g(b) = 0 \big\},    \no 
\end{align}
whereas self-adjoint extensions $T_{\eta,R}$ of $T_{min}$ with coupled boundary conditions are of the form
\begin{align}
\begin{split} 
& T_{\eta,R} f = \tau f,  \quad  \eta\in[0,\pi),\;  R \in SL(2,\bbR),\\
& f \in \dom(T_{\eta,R})=\bigg\{g\in\dom(T_{max}) \, \bigg| \begin{pmatrix} \wti g(b)\\ \wti g^{\, \prime}(b)\end{pmatrix} 
= e^{i\eta}R \begin{pmatrix}
\wti g(a)\\ \wti g^{\, \prime}(a)\end{pmatrix} \bigg\}. \lb{2.27A}
\end{split}
\end{align}
Moreover, $\sigma(T_{\gamma,\delta})$ is simple. 
\end{theorem}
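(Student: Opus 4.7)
The plan is to reduce Theorem \ref{t2.12} to Theorem \ref{t2.6} by specializing the choice of $v_1, v_2$, to establish the limit representations in \eqref{2.25}, \eqref{2.26} via variation of parameters and a direct Cauchy--Schwarz estimate, and then to read off the self-adjoint extension descriptions and spectral simplicity.

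First I would verify that \eqref{2.24} satisfies the hypotheses of Theorem \ref{t2.6}. Since $u_a(\lam_0, \dott)$ and $\hatt u_a(\lam_0, \dott)$ are real-valued solutions of $(\tau - \lam_0) u = 0$, their Wronskian is $x$-independent, so \eqref{2.23} forces $W(\hatt u_a(\lam_0,\dott), u_a(\lam_0,\dott))(x) \equiv 1$ on a neighborhood of $a$, and likewise at $b$. Real-valuedness also gives $W(\ol{v_j}, v_j)(d) = 0$ for $d \in \{a,b\}$. Theorem \ref{t2.6} therefore applies, and substituting \eqref{2.24} into \eqref{2.10} identifies $\wti g_1(a) = -W(u_a(\lam_0,\dott), g)(a)$ and $\wti g_2(a) = W(\hatt u_a(\lam_0,\dott), g)(a)$, with analogous identifications at $b$. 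These yield the first Wronskian equalities in \eqref{2.25}, \eqref{2.26}. The extension descriptions \eqref{2.27}, \eqref{2.27A} then follow from \eqref{2.11}, \eqref{2.11A} by the renaming $\wti g_1(d) = \wti g(d)$, $\wti g_2(d) = \wti g^{\, \prime}(d)$.

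The main technical work consists in proving the limit formulas. For $g \in \dom(T_{max})$, set $F = (\tau - \lam_0) g \in \Lr$. The limit circle property at $a$ places both $u_a(\lam_0, \dott)$ and $\hatt u_a(\lam_0, \dott)$ in $L^2((a,c); rdx)$, so Cauchy--Schwarz renders $I_1(x) := \int_a^x u_a(\lam_0, s) F(s) r(s)\, ds$ and $I_2(x) := \int_a^x \hatt u_a(\lam_0, s) F(s) r(s)\, ds$ absolutely convergent and vanishing as $x \downarrow a$. Variation of parameters with $W(\hatt u_a, u_a) = 1$ produces constants $A, B \in \bbC$ such that
\begin{equation*}
g(x) = A \hatt u_a(\lam_0, x) + B u_a(\lam_0, x) + \hatt u_a(\lam_0, x) I_1(x) - u_a(\lam_0, x) I_2(x).
\end{equation*}
Dividing by $\hatt u_a(\lam_0, x)$ and applying $u_a(\lam_0, x) = \oh(\hatt u_a(\lam_0, x))$ from \eqref{2.17} together with $I_1, I_2 \to 0$ gives $\lim_{x \downarrow a} g(x)/\hatt u_a(\lam_0, x) = A$. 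A short Wronskian bookkeeping (in which the particular solution contributes $W(u_a, \hatt u_a I_1 - u_a I_2)(x) = -I_1(x) \to 0$, using $W(u_a, \hatt u_a) = -1$) then identifies $A = -W(u_a(\lam_0, \dott), g)(a)$, confirming \eqref{2.25}.

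For \eqref{2.26}, subtract $\wti g(a) \hatt u_a(\lam_0, x)$ and divide by $u_a(\lam_0, x)$, producing $B$, the easy remainder $-I_2(x) \to 0$, and the indeterminate term $(\hatt u_a/u_a)(x) I_1(x)$. Controlling this indeterminate form is the main obstacle. I would write $u_a(s) = J(s) \hatt u_a(s)$ with the nondecreasing function $J(s) = u_a(s)/\hatt u_a(s) = \int_a^s 1/(p(t) \hatt u_a(t)^2)\, dt$, whose finiteness near $a$ is the nonprincipal integrability \eqref{2.20}, and then estimate $|I_1(x)| \leq J(x) \int_a^x |\hatt u_a F|\, r\, ds \leq J(x) \|\hatt u_a\|_{L^2((a,x); rdx)} \|F\|_{L^2((a,x); rdx)}$, so $|(\hatt u_a/u_a)(x) I_1(x)| = |I_1(x)|/J(x)$ is bounded by a product of two $L^2$ norms that both vanish as $x \downarrow a$. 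A symmetric Wronskian bookkeeping yields $B = W(\hatt u_a(\lam_0, \dott), g)(a) = \wti g^{\, \prime}(a)$, and the argument at $b$ is identical. Finally, simplicity of $\sigma(T_{\gamma,\delta})$ under separated boundary conditions follows from the standard fact that a single separated boundary condition determines solutions of $(\tau - z) u = 0$ up to a scalar, forcing eigenspaces to be one-dimensional and the associated spectral measure to be scalar, as built into the Weyl--Titchmarsh--Kodaira framework of \cite{GLN20} and \cite[Ch.~13]{GNZ23}.
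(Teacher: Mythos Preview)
The paper does not actually prove Theorem \ref{t2.12}; it is stated in Section \ref{s2} as part of a survey of background material, with all proofs deferred to \cite{GLN20} and \cite[Ch.~13]{GNZ23}. Your argument is correct and follows the standard route one finds in those references: specialize the $v_j$ in Theorem \ref{t2.6} to principal/nonprincipal solutions, then establish the limit formulas \eqref{2.25}, \eqref{2.26} by variation of parameters combined with the identity $u_a(\lambda_0,x)/\hatt u_a(\lambda_0,x) = \int_a^x [p(t)\hatt u_a(\lambda_0,t)^2]^{-1}\,dt$ (which follows from \eqref{2.20} and the Wronskian normalization) and a Cauchy--Schwarz estimate exploiting the limit circle $L^2$-property. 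The identification of $A$ and $B$ with the boundary Wronskians, and the handling of the indeterminate term $(\hatt u_a/u_a)I_1$ via the monotonicity of $J$, are all carried out correctly. The only point you leave implicit is that the piecewise definitions \eqref{2.24} extend to genuine elements of $\dom(T_{max})$, but this is routine (smooth interpolation on a compact subinterval) and is already asserted in the theorem statement itself.
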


The Friedrichs extension $T_F$ of $T_{min}$ now permits a particularly simple characterization in terms of the generalized boundary values $\wti g(a), \wti g(b)$ as derived by Kalf  \cite{Ka78} and subsequently by Niessen and Zettl \cite{NZ92} (see also \cite{Re51}, \cite{Ro85} and the extensive literature cited in \cite{GLN20}, 
\cite[Ch.~13]{GNZ23}):

\begin{theorem} \lb{t2.13}
Assume Hypothesis \ref{h2.1} and that $\tau$ is in the limit circle case at $a$ and $b$ $($i.e., $\tau$ 
is quasi-regular on $(a,b)$$)$. In addition, assume that $T_{min} \geq \lambda_0 I$ for some $\lambda_0 \in \bbR$. Then the Friedrichs extension $T_F$ of $T_{min}$ is characterized by
\begin{align}
T_F f = \tau f, \quad f \in \dom(T_F)= \big\{g\in\dom(T_{max})  \, \big| \, \wti g(a) = \wti g(b) = 0\big\}.    \lb{2.28}
\end{align}
In particular, $T_F = T_{0,0}$.
\end{theorem}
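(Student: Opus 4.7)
The plan is to identify the Friedrichs extension $T_F$ with the extension $T_{0,0}$ corresponding to $\gamma=\delta=0$ in \eqref{2.27} by combining the variational characterization of $T_F$ with the principal/nonprincipal dichotomy supplied by Theorem \ref{t2.12}. First, Theorem \ref{t2.12} already guarantees that the operator $T_{0,0}$ with domain $\{g \in \dom(T_{max}) : \wti g(a) = \wti g(b) = 0\}$ is a self-adjoint extension of $T_{min}$, and by Theorem \ref{t2.6}\,$(iii)$ every self-adjoint extension of $T_{min}$ appears in the enumerated list. Since $T_F$ is such an extension, it suffices to verify the inclusion $\dom(T_F) \subseteq \dom(T_{0,0})$; two self-adjoint operators with one extending the other must coincide.

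Next, recall the standard variational characterization: $T_F$ is the unique self-adjoint extension of $T_{min}$ whose operator domain is contained in the form domain
\begin{equation*}
Q \, := \, \dom\bigl((T_F - \lambda_0 I)^{1/2}\bigr),
\end{equation*}
obtained as the completion of $\dom(T_{min})$ in the norm $\|f\|_Q^2 = ((T_{min} - \lambda_0 I) f, f)_{L^2((a,b);rdx)} + \|f\|^2_{L^2((a,b);rdx)}$. Since elements of $\dom(T_{min})$ have compact support in $(a,b)$, every $g \in Q$ inherits a Dirichlet-type boundary behavior at the (possibly singular) endpoints, and the task reduces to translating this behavior into the vanishing of the generalized boundary values $\wti g(a), \wti g(b)$ from \eqref{2.25}.

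To implement this translation, combine the asymptotic identities \eqref{2.25}, \eqref{2.26}, which give $g(x) = \wti g(a) \hatt u_a(\lambda_0,x) + \wti g^{\,\prime}(a) u_a(\lambda_0,x) + \oh(u_a(\lambda_0,x))$ as $x \downarrow a$ (and the analogous expansion at $b$), with the contrasting integrabilities of Theorem \ref{t2.9}\,$(iii)$, namely $\int_a^c p(x)^{-1} u_a(\lambda_0,x)^{-2}\,dx = \infty$ while $\int_a^c p(x)^{-1} \hatt u_a(\lambda_0,x)^{-2}\,dx < \infty$. A Hardy/Hartman--Wintner-type inequality of the kind employed by Kalf \cite{Ka78} and Niessen--Zettl \cite{NZ92} shows that a nonvanishing $\hatt u_a(\lambda_0,\dott)$-component near $a$ would force $\|g\|_Q = \infty$; hence $\wti g(a) = 0$ for any $g \in \dom(T_F) \subseteq Q$, and symmetrically $\wti g(b) = 0$. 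This yields $\dom(T_F) \subseteq \dom(T_{0,0})$, and the proof closes.

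The main obstacle is the Hardy-type estimate. A concrete route is to introduce a cutoff sequence $\chi_n \in C^\infty((a,b))$ with $\chi_n = 0$ in neighborhoods of $a, b$ and $\chi_n \uparrow 1$ monotonically on $(a,b)$, to write $\chi_n g \in \dom(T_{min})$, and to track the boundary contributions after integration by parts using the explicit two-term asymptotics above. The divergence of $\int_a^c p^{-1} u_a^{-2}\,dx$ forces the coefficient $\wti g(a)$ of $\hatt u_a$ in the decomposition of $g$ near $a$ to vanish in order for $(\chi_n g)_{n\in\bbN}$ to be Cauchy in the $Q$-norm; the same applies at $b$. Once $\dom(T_F) \subseteq \dom(T_{0,0})$ is established, the equality $T_F = T_{0,0}$ follows from self-adjointness of both operators.
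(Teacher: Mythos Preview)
The paper does not prove Theorem \ref{t2.13}; it is stated as a known result and attributed to Kalf \cite{Ka78} and Niessen--Zettl \cite{NZ92} (with further references to Rellich \cite{Re51}, Rosenberger \cite{Ro85}, and \cite{GLN20}, \cite[Ch.~13]{GNZ23}). Your outline follows precisely the strategy of those references: identify $T_F$ among the self-adjoint extensions of Theorem \ref{t2.6}/\ref{t2.12} by showing that membership in the form domain forces the nonprincipal component at each endpoint to vanish, via the divergence $\int_a^c p^{-1}u_a(\lambda_0,\dott)^{-2}\,dx=\infty$ versus the convergence of the corresponding integral for $\hatt u_a$. So your approach is the standard one, and there is nothing to contrast on the level of method.

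Two small corrections to your write-up. First, elements of $\dom(T_{min})$ do \emph{not} have compact support; that is true only for $\dom(\dot T_{min})$. This does not damage the argument, since the form closure of $\dot T_{min}$ and of $T_{min}$ coincide, but the sentence as written is false. Second, $\chi_n g$ will generally not lie in $\dom(T_{min})$ (the boundary Wronskian conditions in \eqref{2.5} are automatic once the support is compact, but you also need $\tau(\chi_n g)\in L^2$, which requires some care with the cutoff); in the cited proofs one works instead directly with the quadratic form $\int p|g'|^2 + \int q|g|^2$ on compactly supported functions and passes to the limit, or one uses the characterization of the form domain in terms of $p^{1/2}g'\in L^2$ and $g/\hatt u_d(\lambda_0,\dott)\to 0$ at $d\in\{a,b\}$. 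With those adjustments your sketch matches the literature proofs the paper invokes.
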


\begin{remark} \lb{r2.14}
$(i)$ As in \eqref{2.15}, one readily verifies for $g, h \in \dom(T_{max})$,
\begin{equation}
W(g,h)(d) = \wti g(d) {\wti h}^{\, \prime}(d) - {\wti g}^{\, \prime}(d) \wti h(d), \quad d \in \{a,b\},    \lb{2.29} 
\end{equation} 
where either side in \eqref{2.29} has a finite limit as $d \in \{a, b\}$. Moreover, if $\tau$ is regular at an endpoint, then the 
generalized boundary values in \eqref{2.25}, \eqref{2.26} reduce to the canonical ones in \eqref{2.14} as long as principal and nonprincipal solutions chosen are appropriately normalized such that \eqref{2.12} and \eqref{2.24} coincide. In addition (and now independently of the chosen normalization of principal and nonprincipal solutions), in the regular context \eqref{2.29} becomes for $g, h \in \dom(T_{max})$,
\begin{equation}
W(g,h)(d) = \wti g(d) {\wti h}^{\, \prime}(d) - {\wti g}^{\, \prime}(d) \wti h(d) = g(d) h^{[1]}(d) - g^{[1]}(d) h(d), \quad d \in \{a,b\}.    \lb{2.32} 
\end{equation} 
\\[1mm] 
$(ii)$ While the generalized boundary values at the endpoint $d \in \{a,b\}$ clearly depend on the choice of nonprincipal solution $\hatt u_{d}(\lambda_0, \dott)$ of $\tau u = \lambda_0 u$ at $d$, the Friedrichs boundary conditions $\wti g(a) = \wti g(b) = 0$ are independent of the choice of this nonprincipal solution. \\[1mm]
$(iii)$ As always in this context, if $\tau$ is in the limit point case at one (or both) interval endpoints, the corresponding boundary conditions at that endpoint are dropped and only a separated boundary condition at the other end point (if the latter is a limit circle endpoint for $\tau$), has to be imposed in Theorems \ref{t2.12} and \ref{t2.13}. In the case where $\tau$ is in the limit point case at both endpoints, all boundary values and boundary conditions become superfluous as $T_{min} = T_{max}$ is self-adjoint. \hfill $\diamond$
\end{remark}

All results surveyed in this section can be found in \cite{GLN20} and \cite[Ch.~13]{GNZ23} which contain very detailed lists of references to the basics of Weyl--Titchmarsh theory. Here we just mention a few additional and classical sources such as \cite[Sect.~129]{AG81}, \cite[Ch.~6]{BHS20}, \cite{BFL20}, 
\cite[Chs.~8, 9]{CL85}, \cite[Sects.~13.6, 13.9, 13.0]{DS88}, \cite{Fr20}, 
\cite[Ch.~III]{JR76}, \cite[Ch.~V]{Na68}, \cite{NZ92}, \cite[Ch.~6]{Pe88}, \cite[Ch.~9]{Te14}, \cite[Sect.~8.3]{We80}, \cite[Ch.~13]{We03}, \cite[Chs.~4, 6--8]{Ze05}.

\section{The (Singular) Weyl--Titchmarsh--Kodaira $m$-Function} \lb{s3}

In this section we recall the construction of the (singular) Weyl--Titchmarsh--Kodaira $m$ and $M$-functions for separated and coupled boundary conditions, respectively, primarily following the detailed treatment in \cite[Ch.~13]{GNZ23} (see also \cite{GLN20}) and demonstrate that the generalized boundary values in \eqref{2.25}, \eqref{2.26} naturally fit into this framework. For simplicity we single out the left endpoint $a$ in the following as the endpoint $b$ can be treated in precisely the same manner.  

\begin{hypothesis} \lb{h3.1}
In addition to Hypothesis \ref{h2.1}, let $T_{\gamma, \delta}$, $\gamma, \delta \in [0,\pi)$, be any self-adjoint extension of $T_{min}$ in $L^2((a,b); r dx)$ with separated boundary conditions as in \eqref{2.11} 
$($if $\tau$ is in the limit circle case at $a$ and/or $b$$)$, and suppose that for some $($and hence for all\,$)$ $x_0 \in (a,b)$, the self-adjoint operator 
$T_{\gamma, 0, a,x_0}$ in $L^2((a,x_0); r dx)$, associated with $\tau|_{(a,x_0)}$ and a Dirichlet boundary condition at $x_0$ $($i.e., $g(x_0)=0$, $g \in \dom(T_{max,a,x_0})$, with $T_{max,a,x_0}$ the maximal operator associated with $\tau|_{(a,x_0)}$ in $L^2((a,x_0); rdx)$$)$, has purely discrete spectrum.
\end{hypothesis}

It is known (see, e.g., \cite{EGNT13}, \cite[Sect.~13.2]{GNZ23}, \cite{KST12}) that Hypothesis \ref{h3.1} is equivalent to the existence of an entire solution $\varphi_{\gamma}(z, \dott)$ of $\tau u = z u$, $z \in \bbC$, that is real-valued for $z \in \bbR$, and lies in 
$\dom(T_{\gamma, \delta})$ near the point $a$. In particular, $\varphi_{\gamma}(z, \dott)$ satisfies the boundary condition indexed by $\gamma$ at the left endpoint $a$ if $\tau$ is in the limit circle case at $a$, and 
$\varphi_{\gamma}(z, \dott) \in L^2((a,x_0); r dx)$ if $\tau$ is in the limit point case at $a$. In addition, it is known that a second, linearly independent entire solution $\vartheta_{\gamma}(z, \dott)$ of $\tau u = z u$ exists, with $\vartheta_{\gamma}(z, \dott)$ real-valued for $z \in \bbR$, satisfying 
\begin{equation}
W(\vartheta_{\gamma}(z, \dott),\varphi_{\gamma}(z, \dott))(x) =1, \quad \gamma \in [0,\pi), \; z \in \bbC, \; x \in (a,b).   \lb{3.1} 
\end{equation}

We note that $\varphi_{\gamma}(z, \dott)$ is unique up to a nonvanishing entire factor (real on the real line) with respect to $z \in \bbC$. Hence, if $\tau$ is in the limit circle case at $a$ we will always normalize 
$\varphi_{\gamma}(z, \dott)$ and $\vartheta_{\gamma} (z,\dott)$ such that
\begin{align}
\begin{split}
\wti \varphi_{\gamma} (z,a) &= - \sin(\gamma), \quad \wti \varphi_{\gamma}^{\, \prime} (z,a) = \cos(\gamma),    \lb{3.2} \\
\wti \vartheta_{\gamma} (z,a) &= \cos(\gamma), \quad \wti \vartheta_{\gamma}^{\, \prime} (z,a) = \sin(\gamma); 
\quad  \gamma \in [0,\pi), \; z \in \bbC.    
\end{split} 
\end{align}

In addition to the entire fundamental system $\varphi_{\gamma}(z, \dott), \vartheta_{\gamma}(z, \dott)$ 
of $\tau u = z u$, we also mention the standard entire fundamental system 
$\theta_0(z, \dott,x_0), \phi_0(z, \dott,x_0)$ of $\tau u = z u$ normalized at $x_0 \in (a,b)$ in the 
usual manner, 
\begin{align}
& \theta_0(z,x_0,x_0) =1, \quad \theta_0^{[1]}(z,x_0,x_0) =0,  \quad  
\phi_0(z,x_0,x_0) =0, \quad \phi_0^{[1]}(z,x_0,x_0) =1,      \no \\
& \hspace*{10cm} z \in \bbC,   \lb{3.3} 
\end{align}
and the Weyl--Titchmarsh solutions $\psi_{\gamma,0,-} (z,\dott,x_0)$ and $\psi_{0,\delta,+} (z,\dott,x_0)$ 
of $\tau u = z u$  given by 
\begin{align}
\begin{split}
\psi_{\gamma,0,-} (z,x,x_0) = \theta_0(z,x,x_0) + m_{\gamma,0,-} (z,x_0) \phi_0(z,x,x_0),&     \\
\gamma \in [0,\pi), \; z \in \rho(T_{\gamma, 0, a,x_0}), \; x \in (a,b),& 
\end{split} \\
\begin{split}
\psi_{0,\delta,+} (z,x,x_0) = \theta_0(z,x,x_0) + m_{0,\delta,+} (z,x_0)  \phi_0(z,x,x_0),&    \\
\gamma \in [0,\pi), \; z \in \rho(T_{0,\delta,x_0,b}), \; x \in (a,b),& 
\end{split} 
\end{align}
where $T_{0,\delta,x_0,b}$ in $L^2((x_0,b); r dx)$ is the self-adjoint operator associated with 
$\tau|_{(x_0,b)}$ and a Dirichlet boundary condition at $x_0$ (i.e., $g(x_0)=0$, $g \in \dom(T_{max,x_0,b})$, 
with $T_{max,x_0,b}$ the maximal operator associated with $\tau|_{(x_0,b)}$ in $L^2((x_0,b); rdx)$). In particular, $\psi_{\gamma,0,-} (z, \dott,x_0)$ satisfies the boundary condition indexed by $\gamma$ at the left endpoint $a$; in addition, $\psi_{\gamma,0,-} (z, \dott,x_0) \in L^2((a,x_0); r dx)$ if $\tau$ is in the limit circle case at $a$. Similarly, $\psi_{\gamma,0,-} (z, \dott,x_0)$ satisfies $\psi_{\gamma,0,-} (z, \dott,x_0) \in L^2((a,x_0); r dx)$ if $\tau$ is in the limit point case at $a$. Analogously, $\psi_{0,\delta,+} (z,\dott,x_0)$ satisfies the boundary condition indexed by $\delta$ at the right endpoint $b$ and the condition $\psi_{0,\delta,+} (z,\dott,x_0) \in L^2((x_0,b); r dx)$ if $\tau$ is in the limit circle case at $b$. Moreover, $\psi_{0,\delta,+} (z,\dott,x_0) \in L^2((x_0,b); r dx)$ if $\tau$ is in the limit point case at $b$.
Thus, $m_{\gamma,0,-} (\dott,x_0)$ is analytic on $\bbC \backslash \bbR$, meromorphic on $\bbC$, with simple poles on the real axis precisely at the simple eigenvalues of $T_{\gamma, 0, a,x_0}$, and 
$\varphi_{\gamma} (z,\dott)$ is a $z$-dependent multiple of $\psi_{\gamma,0,-} (z,\dott,x_0)$, 
\begin{equation}
\varphi_{\gamma} (z,\dott) = C_{\gamma,-}(z,x_0) \psi_{\gamma,0,-} (z,\dott,x_0) 
\end{equation}
(with $C_{\gamma,-}(z,x_0)$ entire with respect to $z$), real-valued for $z \in \bbR$, and having simple zeros at the simple poles of $m_{\gamma,0,-} (\dott,x_0)$. In addition, one readily confirms that
\begin{equation}
m_{\gamma,0,-} (z,x_0) = \psi_{\gamma,0,-}^{[1]} (z,x_0,x_0) = \varphi_{\gamma}^{[1]} (z,x_0)/\varphi_{\gamma} (z,x_0), \quad z \in \rho(T_{\gamma, 0, a,x_0}).     \lb{3.7}
\end{equation}

Next, we rewrite $\psi_{0,\delta,+}(z,\dott,x_0)$ in terms of the entire fundamental system 
$\varphi_{\gamma}(z, \dott), \vartheta_{\gamma}(z, \dott)$ and thereby introduce the singular 
Weyl--Titchmarsh--Kodaira solution, $\psi_{\gamma,\delta} (z,\dott)$,
and the  Weyl--Titchmarsh--Kodaira $m$-function, $m_{\gamma,\delta}(\dott)$, via 
\begin{align} 
\begin{split}
\psi_{\gamma,\delta} (z,x) &= C_{\gamma,\delta}(z,x_0) \psi_{0,\delta,+} (z,x,x_0)   \\
&= \vartheta_{\gamma}(z,x) + m_{\gamma,\delta} (z) \varphi_{\gamma}(z,x), \quad 
z \in \rho(T_{\gamma,\delta}),     \lb{3.8}
\end{split} 
\end{align}
where $C_{\gamma,\delta}(\dott,x_0)$ is analytic and nonvanishing on $\bbC \backslash \bbR$. In analogy to 
\eqref{3.7} one obtains
\begin{equation}
m_{0,\delta,+} (z,x_0) = \psi_{0,\delta,+}^{[1]} (z,x_0,x_0) 
= \psi_{\gamma,\delta}^{[1]} (z,x_0)/\psi_{\gamma,\delta} (z,x_0), \quad 
z \in \rho(T_{0,\delta,x_0,b}).
\end{equation}
In particular,
\begin{equation}
m_{0,\delta,+} (z,x_0) = \f{\vartheta_{\gamma}^{[1]} (z,x_0) 
+ m_{\gamma,\delta} (z) \varphi_{\gamma}^{[1]} (z,x_0)}{\vartheta_{\gamma} (z,x_0) 
+ m_{\gamma,\delta} (z) \varphi_{\gamma} (z,x_0)}, \quad 
z \in \bbC \rho(T_{0,\delta,x_0,b}), 
\end{equation}
and thus,
\begin{align}
 m_{\gamma,\delta} (z) &= \f{- \vartheta_{\gamma}^{[1]} (z,x_0)  
 + m_{0,\delta,+} (z,x_0) \vartheta_{\gamma} (z,x_0)}{\varphi_{\gamma}^{[1]} (z,x_0) 
 - m_{0,\delta,+} (z,x_0) \varphi_{\gamma} (z,x_0)}     \no \\
 &= \f{1}{\varphi_{\gamma} (z,x_0)} \f{- \vartheta_{\gamma}^{[1]} (z,x_0)  
 + m_{0,\delta,+} (z,x_0) \vartheta_{\gamma} (z,x_0)}{m_{\gamma,0,-} (z,x_0) - m_{0,\delta,+} (z,x_0)}   \no \\
 &= \f{1}{[\varphi_{\gamma} (z,x_0)]^2} \f{1}{m_{\gamma,0,-} (z,x_0) - m_{0,\delta,+} (z,x_0)} 
 - \f{\vartheta_{\gamma} (z,x_0)}{\varphi_{\gamma} (z,x_0)},    \lb{3.11} \\
 & \hspace*{4.95cm} \gamma, \delta  \in [0,\pi), \; z \in \rho(T_{\gamma,\delta}),      \no
\end{align}
employing \eqref{3.1}.

\begin{remark} \lb{r3.2}
In the case where $\tau_{\a,\b}$ is in the limit circle case at $a$, \eqref{3.2} uniquely determines 
$\varphi_{\gamma}(z,\dott)$ and $\vartheta_{\g}(z,\dott)$. If $\tau_{\a,\b}$ is in the limit point case at $a$, equation \eqref{3.1} alone cannot determine $\varphi_{\gamma}(z,\dott)$ and $\vartheta_{\g}(z,\dott)$ uniquely. In particular, 
$\varphi_{\gamma}$ is only determined up to a nonvanishing, entire, multiplicative factor and $\vartheta_{\g}(z,\dott)$ can contain an additive term of the type $c_{\g}(\dott) \varphi_{\g}(z,\dott)$, with $c_{\g}(\dott)$ entire. This will influence the actual form of $m_{\g,\d}(\dott)$ in \eqref{3.8}. However, since $c_{\g}(\dott)$ and $\varphi_{\g}(\dott,x)$, $x \in (a,b)$, are entire, the multiplicative ambiguity in $\varphi_{\gamma}(z,\dott)$, as well as the additive term 
$c_{\g}(\dott) \varphi_{\g}(z,\dott)$, cannot affect the measure $d \rho_{\g,\d}$ obtained via the (generalized) Stieltjes inversion formula from $m_{\g,\d}$. Thus, from a spectral theoretic point of view, the ambiguities in the choices of 
$\varphi_{\gamma}(z,\dott)$ and $\vartheta_{\g}(z,\dott)$ can safely be ignored and hence in a concrete example, the actual choice of $\varphi_{\gamma}(z,\dott)$ and $\vartheta_{\g}(z,\dott)$ is largely a matter of convenience. 
\hfill $\diamond$ 
\end{remark}

Since $- m_{\gamma,0,-} (\dott,x_0)$ and $m_{0,\delta,+} (\dott,x_0)$ are Nevanlinna--Herglotz functions and 
$\vartheta_{\gamma} (z,\dott)$ and $\varphi_{\gamma} (z,\dott)$ are entire with respect to $z$ and real-valued for 
$z \in \bbR$, it follows that $m_{\gamma,\delta} (\dott)$ is analytic on $\bbC \backslash \bbR$ and that 
\begin{equation}
m_{\gamma,\delta} (z) = \ol{m_{\gamma,\delta} (\ol{z})}, \quad z \in \bbC_+.
\end{equation}

Since the Green's function $G_{\gamma,\delta}(z,\dott,\dott)$ of $T_{\gamma,\delta}$ (i.e., the integral kernel of its resolvent) is of the form
\begin{equation}
G_{\gamma,\delta}(z,x,x') = \begin{cases} \varphi_{\gamma} (z,x) \psi_{\gamma,\delta} (z,x'), 
& a < x \leq x' < b, \\
\varphi_{\gamma} (z,x') \psi_{\gamma,\delta} (z,x), & a < x' \leq x < b,
\end{cases} \quad z \in \rho(T_{\gamma,\delta}), 
\end{equation}
one obtains
\begin{align}
\begin{split} 
G_{\gamma,\delta}(z,x_0,x_0) &= \varphi_{\gamma} (z,x_0) [\vartheta_{\gamma}(z,x_0) + m_{\gamma,\delta} (z) \varphi_{\gamma}(z,x_0)]    \\
&= [m_{\gamma,0,-} (z,x_0) - m_{0,\delta,+} (z,x_0)]^{-1}, \quad z \in \rho(T_{\gamma,\delta}),
\end{split} 
\end{align}
confirming the generally known fact that all diagonal Green's functions such as $G_{\gamma,\delta}(z,x_0,x_0)$ possess the Nevanlinna--Herglotz property. Equivalently, one obtains
\begin{equation}
m_{\gamma,\delta} (z) = [\varphi_{\gamma}(z,x_0)]^{-2} G_{\gamma,\delta}(z,x_0,x_0) 
- [\vartheta_{\gamma}(z,x_0)/\varphi_{\gamma}(z,x_0)], \quad z \in \rho(T_{\gamma,\delta}),
\end{equation} 
illustrating that generally, $m_{\gamma,\delta}$ is a generalized Nevanlinna--Herglotz function. One notes that 
$\varphi_{\gamma}(\dott,x_0)$ has a discrete set of real zeros (accumulating at $+ \infty$) precisely at the eigenvalues of $T_{\gamma,0,a,x_0}$, but since by definition $m_{\gamma,\delta} (\dott)$ is independent  of $x_0$, a small change in $x_0$ removes a possible ambiguity when applying an 
analog of the Stieltjes inversion formula to $m_{\gamma,\delta}$ to derive the spectral function 
$\rho_{\gamma,\delta}$ associated with $T_{\gamma, \delta}$. Even though $m_{\gamma,\delta}$, 
and hence, $\rho_{\gamma,\delta}$, is nonunique, the measure equivalence class generated by the spectral function $\rho_{\gamma,\delta}$ is unique and hence the spectrum (and it's subdivisions) are related to the singularity structure of $\Im(m_{\gamma,\delta} (\dott))$ on the real line. 

One also notices that if $\tau$ is in the limit circle case at $a$, the normalization chosen in \eqref{3.2}, combined with \eqref{3.8} readily implies
\begin{equation}
m_{\gamma,\delta} (z) = 
\cos(\gamma) \wti \psi_{\gamma,\delta}^{\, \prime} (z,a)  - \sin(\gamma) \wti \psi_{\gamma,\delta} (z,a), 
\quad z \in \rho(T_{\gamma, \delta}),
\end{equation}
a result familiar from the special case where $a$ is a regular endpoint (employing the fact \eqref{2.14}),  illustrating once more that the generalized boundary values \eqref{2.25}, \eqref{2.26}, in the context of a singular endpoint $a$, are natural extensions of the familiar boundary values in the case of regular endpoints.

Moreover, if $\tau$ is in the limit circle case at $b$, then $\psi_{\gamma,\delta}(z,\dott)$, like 
$\psi_{0,\delta,+}(z,\dott,x_0)$, satisfies the boundary condition indexed by $\delta \in [0,\pi)$ at $b$,
\begin{equation}
\sin(\delta) \wti\psi_{\gamma,\delta}' (z,b) + \cos(\delta) \wti\psi_{\gamma,\delta} (z,b) = 0, \quad z \in \bbC\backslash\bbR, 
\end{equation}
and hence 
\begin{align} \lb{3.18}
m_{\gamma,\delta}(z)= 
- \f{\cos(\delta) \wti\vartheta_{\gamma} (z,b) + \sin(\delta) \wti\vartheta_{\gamma}^{\, \prime} (z,b)}{\cos(\delta) \wti\varphi_{\gamma} (z,b) + \sin(\delta) \wti\varphi_{\gamma}^{\, \prime} (z,b)}, \quad z \in \bbC\backslash\bbR.   
\end{align}

In addition, given that $m_{\gamma,0,-} (z,x_0)$ is meromorphic on $\bbC$ by hypothesis, one infers that 
\begin{equation}
\text{$ m_{\gamma,\delta} (\dott)$ is meromorphic on $\bbC$ if and only if $m_{0,\delta,+} (\dott,x_0)$ is.} 
\end{equation}
In this case (cf.\ \eqref{3.11}) all poles of $m_{\gamma,\delta} (\dott)$ are simple and hence 
\begin{equation}
\text{$\lambda_0 \in \sigma_p(T_{\gamma,\delta})$ if and only if 
$\varphi_{\gamma} (\lambda_0,\dott) \in L^2((a,b);rdx)$} 
\end{equation}
and then
\begin{equation}
- {\rm Res}_{z=\lambda_0} (m_{\gamma,\delta}(\dott)) = \|\varphi_{\gamma} (\lambda_0,\dott)\|_{L^2((a,b);rdx)}^{-2},
\end{equation}
in accordance with the first order pole behavior of $G_{\gamma,\delta}(z,x_0,x_0)$ at $z=\lambda_0$, 
\begin{align}
G_{\gamma,\delta}(z,x,x') \underset{z \to \lambda_0}{=} 
- \f{{\rm Res}_{z=\lambda_0} (m_{\gamma,\delta}(\dott))}{z-\lambda_0} \varphi_{\gamma} (\lambda_0,x) 
\varphi_{\gamma} (\lambda_0,x') + \Oh(1). 
\end{align}

Fixing the boundary condition at $b$ indexed by $\delta \in [0, \pi)$ (if any), and varying the boundary condition at the left endpoint $a$ then yields the standard linear fractional transformation 
\begin{align}
\begin{split} 
m_{\gamma_{2}, \delta}(z) =\frac{-\sin(\gamma_{2}-\gamma_1) +
\cos(\gamma_{2}-\gamma_1) m_{\gamma_1,\delta}(z)}
{\cos(\gamma_{2}-\gamma_1) +\sin(\gamma_{2}-\gamma_1)
m_{\gamma_1, \delta}(z)},&      \lb{3.23} \\ 
\gamma_1, \gamma_2, \delta \in [0,\pi), \; z \in \bbC \backslash \bbR.&
\end{split}  
\end{align}

We also note a well-known observation (see, \cite{EGNT13}, \cite{KT11}) when $\tau$ is in the limit circle case at $a$: In this situation  Hypothesis \ref{h3.1} is always satisfied and one infers that 
\begin{equation}
\f{\Im(m_{\gamma,\delta} (z))}{\Im(z)} = \int_a^b r(x)dx \, |\psi_{\gamma,\delta} (z,x)|^2 > 0, \quad 
z \in \rho(T_{\gamma, \delta}),     \lb{3.24}
\end{equation}
in particular, in this special case $m_{\gamma,\delta} (\dott)$ is actually a Nevanlinna--Herglotz function.

In case $\tau$ is in the limit circle case at $a$ and in the limit point case at $b$, one simply drops the $\delta$-dependence of all quantities. Similarly, the $\gamma$-dependence of all quantities is dropped if $\tau$ is in the limit point case at $a$. 

Next, we briefly recall the case of coupled boundary conditions following \cite[Ch.~13]{GNZ23}. Recalling \eqref{3.1}, \eqref{3.2}, one obtains for the Green's function of $T_{\eta,R}$, 
\begin{align}
& G_{\eta,R}(z,x,x') = \f{- e^{i \eta}}{\wti F_{\eta,R}(z)}     \no \\
& \qquad \times \Big\{\big[ - R_{1,2} {\wti \varphi_0}^{\, \prime}(z,b) + R_{2,2} \wti \varphi_0(z,b)\big] \vartheta_0(z,x) \vartheta_0(z,x')     \no \\
& \hspace*{1.3cm} + \big[- R_{1,1} {\wti \vartheta_0}^{\, \prime}(z,b) + R_{2,1} \wti \vartheta_0(z,b)\big] \varphi_0(z,x) \varphi_0(z,x')   \no \\
& \hspace*{1.3cm} + \big[e^{-i \eta} + R_{1,2} {\wti \vartheta_0}^{\, \prime}(z,b) - R_{2,2} \wti \vartheta_0(z,b)\big]
\vartheta_0(z,x) \varphi_0(z,x')    \no \\
& \hspace*{1.3cm} + \big[- e^{-i \eta} + R_{1,1} {\wti \varphi_0}^{\, \prime}(z,b) - R_{2,1} \wti \varphi_0(z,b)\big] \varphi_0(z,x) \vartheta_0(z,x')\Big\}    \no \\
& \quad + \begin{cases} 0, & a < x \leq x' < b, \\
[\vartheta_0(z,x) \varphi_0(z,x') - \varphi_0(z,x) \vartheta_0(z,x')], & a < x' \leq x < b,
\end{cases}     \lb{3.25} \\
& \hspace*{7.5cm} z \in \rho(T_{\eta,R}),    \no
\end{align}
where 
\begin{align} 
& \wti F_{\eta,R}(z) = - e^{i \eta} \big[R_{1,1} {\wti \varphi_0}^{\, \prime}(z,b) + R_{2,2} \wti \vartheta_0(z,b) - R_{2,1} 
\wti \varphi_0(z,b) - R_{1,2} {\wti \vartheta_0}^{\, \prime}(z,b)    \no \\
& \hspace*{2.45cm} - 2 \cos(\eta)\big].      \lb{3.26}
\end{align}
The corresponding Green's function induced $M$-function is then given by
\begin{align}
& M_{\eta,R}(z,a) = \begin{pmatrix} \wti G_{\eta,R}(z,a,a)
& 2^{-1} \big(\big[\wti \partial_1 + \wti \partial_2 \big] G_{\eta,R}\big)^{\wti{}}(z,a,a) \\[1mm]
2^{-1} \big(\big[\wti \partial_1 + \wti \partial_2 \big] G_{\eta,R}\big)^{\wti{}}(z,a,a)
& \big(\wti \partial_1 \wti \partial_2 G_{\eta,R}\big)(z,a,a)
\end{pmatrix}     \lb{3.27} \\[1mm]
& \quad = \f{- e^{i \eta}}{\wti F_{\eta,R}(z)}
\left(\begin{smallmatrix}
R_{2,2} \wti \varphi_0(z,b) - R_{1,2} {\wti \varphi_0}^{\, \prime}(z,b)
& 2^{-1} \big[R_{1,1} {\wti \varphi_0}^{\, \prime}(z,b) - R_{2,1} \wti \varphi_0(z,b)  \\[1mm]
{} & \qquad \, - R_{2,2} \wti \vartheta_0(z,b) + R_{1,2}  {\wti \vartheta_0}^{\, \prime}(z,b)\big]  \\[2mm]
2^{-1} \big[R_{1,1} {\wti \varphi_0}^{\, \prime}(z,b) - R_{2,1} \wti \varphi_0(z,b)
& R_{2,1} \wti \vartheta_0(z,b) - R_{1,1} {\wti \vartheta_0}^{\, \prime}(z,b) \\[1mm] \\[1mm]
\qquad \, - R_{2,2} \wti \vartheta_0(z,b) + R_{1,2} {\wti \vartheta_0}^{\, \prime}(z,b)\big]
& {}
\end{smallmatrix}\right),     \no \\[1mm]
& \hspace*{8.4cm} z \in \bbC \backslash \sigma(T_{\eta,R}),     \lb{3.28}
\end{align}
with
\begin{equation}
\big(\wti \partial_1 \wti \partial_2 G_{\eta,R}\big)(z,a,a) = \big(\wti \partial_2 \wti \partial_1 G_{\eta,R}\big)(z,a,a), 
\quad z \in \bbC \backslash \sigma(T_{\eta,R}).     \lb{3.29}
\end{equation} 
Here we employed the notation, 
\begin{align}
& \wti G(z,a,a) = \lim_{x_1 \downarrow a} \lim_{x_2 \downarrow a} \f{G(z,x_1,x_2)}{\hatt u_a(\lambda_0,x_1) \hatt u_a(\lambda_0,x_2) },    \no \\
&\big(\big[\wti \partial_1 + \wti \partial_2 \big] G\big)^{\wti{}} (z,a,a) = \lim_{x_2 \downarrow a} \lim_{x_1 \downarrow a} 
\bigg[\f{G(z,x_1,x_2) - \wti G(z,a,x_2) \hatt u_a(\lambda_0,x_1)}{u_a(\lambda_0,x_1) \hatt u_a(\lambda_0,x_2)}\bigg]    \no \\
& \hspace*{3.55cm} + \lim_{x_1 \downarrow a} \lim_{x_2 \downarrow a} 
\bigg[\f{G(z,x_1,x_2) - \wti G(z,x_1,a) \hatt u_a(\lambda_0,x_2)}{\hatt u_a(\lambda_0,x_1) u_a(\lambda_0,x_2)}\bigg]    \no \\
& \wti G(z,a,x_2) = \lim_{x_1 \downarrow a} \f{G(z,x_1,x_2)}{\hatt u_a(\lambda_0,x_1)}, \quad 
\wti G(z,x_1,a) = \lim_{x_2 \downarrow a} \f{G(z,x_1,x_2)}{\hatt u_a(\lambda_0,x_2)},     \lb{3.30} \\
& \big(\wti \partial_1 \wti \partial_2 G\big)(z,a,a)     \no \\
& \quad = \lim_{x_1 \downarrow a} \Bigg\{\f{\lim_{x_2 \downarrow a} 
\Big(\f{G(z,x_1,x_2) - \wti G(z,x_1,a) \hatt u_a(\lambda_0,x_2)}{u_a(\lambda_0,x_2)}\Big)}{u_a(\lambda_0,x_1)}     \no \\
& \hspace*{1.7cm} - \f{\lim_{x'_1 \downarrow a} \lim_{x_2 \downarrow a} \Big(\f{G(z,x'_1,x_2) 
- \wti G(z,x'_1,a) \hatt u_a(\lambda_0,x_2)}{\hatt u_a(\lambda_0,x'_1) u_a(\lambda_0,x_2)}\Big) 
\hatt u_a(\lambda_0,x_1)}{u_a(\lambda_0,x_1)}\Bigg\}     \no \\
& \quad = \lim_{x_2 \downarrow a} \Bigg\{\f{\lim_{x_1 \downarrow a} 
\Big(\f{G(z,x_1,x_2) - \wti G(z,a,x_2) \hatt u_a(\lambda_0,x_1)}{u_a(\lambda_0,x_1)}\Big)}{u_a(\lambda_0,x_2)}     \no \\
& \hspace*{1.7cm} - \f{\lim_{x'_2 \downarrow a} \lim_{x_1 \downarrow a} \Big(\f{G(z,x_1,x'_2) 
- \wti G(z,a,x'_2) \hatt u_a(\lambda_0,x_1)}{u_a(\lambda_0,x_1) \hatt u_a(\lambda_0,x'_2)}\Big) 
\hatt u_a(\lambda_0,x_2)}{u_a(\lambda_0,x_2)}\Bigg\}     \no \\
& \quad = \big(\wti \partial_2 \wti \partial_1 G\big)(z,a,a),     \no 
\end{align}
where $G$ represents $G_{\eta,R}$. The actual computations are greatly simplified since by \eqref{3.25}, $G_{\eta,R}$ is a finite sum of products of solutions of $\tau u = z u$ and hence one can employ the facts,  
\begin{align}
& [f(x_1) g(x_2)]^{\; \wti{}} \, \big|_{x_1=a, \, x_2=a} = \wti f(a) \wti g(a),    \no \\
& \wti \partial_1 [f(x_1) g(x_2)]^{\; \wti{}} \, \big|_{x_1=a, \, x_2=a} = {\wti f}^{\, \prime}(a) \wti g(a),    \no \\
& \wti \partial_2 [f(x_1) g(x_2)]^{\; \wti{}} \, \big|_{x_1=a, \, x_2=a} = \wti f(a) {\wti g}^{\, \prime}(a),   \lb{3.31} \\
& \wti \partial_1 \wti \partial_2 [f(x_1) g(x_2)] \big|_{x_1=a, \, x_2=a} = {\wti f}^{\, \prime}(a) {\wti g}^{\, \prime}(a) 
= \wti \partial_2 \wti \partial_1 [f(x_1) g(x_2)] \big|_{x_1=a, \, x_2=a},   \no \\
& \hspace*{8.15cm} f, g \in \dom(T_{max}).     \no 
\end{align}

For basic literature on the notion of Weyl--Titchmarsh--Kodaira $m$-functions see \cite{EGNT13}, \cite{FL23}, \cite{Fu08}, \cite{FL10}, \cite{GLN20}, \cite{GZ06}, \cite[Ch.~13]{GNZ23}, \cite{Ko49}, \cite{KST12}, \cite{KT11}, and the references cited therein.

\section{The Jacobi Operator and its Boundary Values} \lb{s4}

We now turn to the principal topic of this paper, the Jacobi differential expression 
\begin{align} \lb{4.1}
\begin{split} 
\tau_{\a,\b} = - (1-x)^{-\a} (1+x)^{-\b}(d/dx) \big((1-x)^{\a+1}(1+x)^{\b+1}\big) (d/dx),&     \\ 
\a, \b \in \bbR, \; x \in (-1,1), 
\end{split} 
\end{align}
that is, in connection with Sections \ref{s2}, \ref{s3} one now has  
\begin{align}
& a=-1, \quad b = 1,    \no \\
& p(x) = p_{\a,\b}(x) = (1-x)^{\a+1}(1+x)^{\b+1}, \quad q (x) = q_{\a,\b}(x) = 0,  \lb{4.2} \\ 
& r(x) = r_{\a,\b}(x) = (1-x)^\a (1+x)^\b, \quad \a,\b\in\bbR, \; x\in(-1,1)     \no 
\end{align}
(see, e.g. \cite[Ch.~22]{AS72}, \cite{BEZ01}, \cite{Bo29}, \cite[Sect.~23]{Ev05}, \cite[Ch.~4]{Is05}, \cite[Sects.~VII.6.1, XIV.2]{Kr02}, \cite[Ch.~18]{OLBC10}, \cite[Ch.~IV]{Sz75}). 

$L^2$-realizations of $\tau_{\a,\b}$ are thus most naturally associated with the Hilbert space 
$L^2((-1,1); r_{\a,\b} dx)$. However, occasionally the weight function is absorbed into the differential expression, as in \eqref{4.4a} below, leading to the unweighted Hilbert space $L^2((-1,1);dx)$ (cf. \cite[p.~1510--1520]{DS88}, \cite[Sect.~24]{Ev05}, \cite{Gr74}). Indeed, utilizing the unitary map
\begin{equation}
U_{\al,\be} \colon \begin{cases}
L^2((-1,1); dx) \to L^2\big((-1,1); r_{\al,\be}dx\big),     \lb{4.3a} \\
f \mapsto (1 - \dott)^{-\al/2} (1 + \dott)^{-\be/2} f
\end{cases}
\end{equation}
one confirms that 
\begin{align}
& (1-x)^{\al/2} (1+x)^{\be/2} \tau_{\al,\be} (1-x)^{- \al/2} (1+x)^{- \be/2}     \no \\
& \quad = - \f{d}{dx} \big(1-x^2\big) \f{d}{dx} + \f{\al^2}{4} \f{1+x}{1-x} 
+ \f{\be^2}{4} \f{1-x}{1+x} - \f{\al + \be + \al \be}{2},       \lb {4.4a} \\
& \hspace*{5.93cm} \al, \be \in \bbR, \; x \in (-1,1).    \no
\end{align}
For more recent developments see, for instance, \cite{EKLWY07}, \cite{Fr20}, \cite{FL20}, \cite{KKT18}, \cite{KMO05}.

For later convenience we also mention the following relations that hold for all $\a, \b \in \bbR$,
\begin{align} 
& (1+x)^{-\b} \tau_{\a,-\b} (1+x)^\b = \tau_{\a,\b} + (1+\a)\b,    \no\\
& (1-x)^{-\a} \tau_{-\a,\b} (1-x)^\a = \tau_{\a,\b} + (1+\b)\a,   \lb{4.3} \\
& (1-x)^{-\a}(1+x)^{-\b} \tau_{-\a,-\b} (1-x)^\a(1+x)^\b = \tau_{\a,\b} + \a+\b,  \no 
    \end{align}
where $(1+x)^{\pm \b}$ and $(1-x)^{\pm \a}$ are regarded as operators of multiplication.

To decide the limit point/limit circle classification of $\tau_{\a,\b}$ at the interval endpoints $\pm 1$, it suffices 
to note that if $y_1$ is a given solution of $\tau y = 0$, then a 2nd linearly independent solution $y_2$ of 
$\tau y = 0$ is obtained via the standard formula
\begin{equation}
y_2(x) = y_1(x) \int_c^x dx' \, p(x')^{-1} y_1(x')^{-2}, \quad c, x \in (a,b).    \lb{4.4} 
\end{equation}
Returning to the concrete Jacobi case at hand, one notices that 
\begin{align} 
& y_1(x) = 1, \quad x \in (-1,1), \no \\
& y_2(x) = \int_0^x dx' \, (1 - x')^{-1-\a} (1+x')^{-1-\b}      \lb{4.5} \\
&\quad = \begin{cases}
2^{-1-\a} \b^{-1} (1+ x)^{-\b} [1+\Oh(1+ x)] + \Oh(1), & \a \in \bbR, \, \b \in \bbR\backslash\{0\}, \; \text{as $x \downarrow -1$}, \\
- 2^{-1-\a} \ln(1+ x) + \Oh(1), & \a \in \bbR, \, \b = 0, \; \text{as $x \downarrow -1$},  \\
2^{-1-\b} \a^{-1} (1 - x)^{-\a} [1+\Oh(1- x)] + \Oh(1), & \a \in \bbR\backslash\{0\}, \, \b \in \bbR, \; \text{as $x \uparrow 1$}, \\
- 2^{-1-\b} \ln(1 - x) + \Oh(1), & \a =0, \, \b \in \bbR, \; \text{as $x \uparrow 1$}.   
\end{cases}    \no 
\end{align} 
Thus, an application of Theorem \ref{t2.3}, Definition \ref{d2.4}, and Remark \ref{r2.7}\,$(ii)$ implies the classification,
\begin{equation}
\tau_{\a,\b} \, \text{ is } \begin{cases} \text{regular at $-1$ if and only if $\a \in \bbR$, $\b \in (-1,0)$,} \\
\text{in the limit circle case (and not regular) at $-1$ if and only if} \\
\quad \text{$\a \in \bbR$, $\b \in [0,1)$,} \\
\text{in the limit point case at $-1$ if and only if $\a \in \bbR$, $\b \in \bbR \backslash (-1,1)$,} \\
\text{regular at $1$ if and only if $\a \in (-1,0)$, $\b \in \bbR$,} \\
\text{in the limit circle case (and not regular) at $1$ if and only if} \\ 
\quad \text{$\a \in [0,1)$, $\b \in \bbR$,} \\
\text{in the limit point case at $1$ if and only if $\a \in \bbR \backslash (-1,1)$, $\b \in \bbR$.}
\end{cases}     \lb{4.6}
\end{equation}
The maximal and preminimal operators, $T_{max,\a,\b}$ and $\dot T_{min,\a,\b}$, associated to $\tau_{\a,\b}$ 
in $L^2((-1,1); r_{\a,\b} dx)$ are then given by
\begin{align}
&T_{max,\a,\b} f = \tau_{\a,\b} f,     \no
\\
& f \in \dom(T_{max,\a,\b})=\big\{g\in L^2((-1,1); r_{\a,\b} dx) \, \big| \,  g,g^{[1]}\in AC_{loc}((-1,1)); \no \\
& \hspace*{6.8cm} \tau_{\a,\b} g\in L^2((-1,1); r_{\a,\b} dx)\big\},
\end{align}
and
\begin{align}
& \dot T_{min,\a,\b} f = \tau_{\a,\b} f, \no
\\
& f \in \dom\big(\dot T_{min,\a,\b}\big)=\big\{g\in L^2((-1,1); r_{\a,\b} dx)  \, \big| \,  g,g^{[1]}\in AC_{loc}((-1,1));    \\ 
&\hspace*{2.05cm} \supp \, (g)\subset(-1,1) \text{ is compact; } 
\tau_{\a,\b} g\in L^2((-1,1); r_{\a,\b} dx)\big\}.   \no 
\end{align}

The fact \eqref{4.5} naturally leads to principal and nonprincipal solutions 
$u_{\pm1,\a,\b}(0,x)$ and $\hatt u_{\pm1,\a,\b}(0,x)$ of $\tau_{\a,\b}y=0$ near $\pm1$ as follows: 
\begin{align} 
\begin{split} 
u_{-1,\a,\b}(0,x)&=\begin{cases}
-2^{-\a-1}\b^{-1} (1+x)^{-\b}[1+\Oh(1+x)], & \b\in(-\infty,0),\\
1, & \b\in[0,\infty),
\end{cases}     \\
\hatt u_{-1,\a,\b}(0,x)&=\begin{cases}
1, & \b\in(-\infty,0),\\
-2^{-\a-1}\ln((1+x)/2), & \b=0,\\
2^{-\a-1} \b^{-1} (1+x)^{-\b} [1+\Oh(1+x)], & \b\in(0,\infty);
\end{cases}
\end{split} 
\quad \a\in\bbR,     \lb{4.9} 
\end{align}
and
\begin{align}
\begin{split} 
u_{1,\a,\b}(0,x)&=\begin{cases}
2^{-\b-1} \a^{-1} (1-x)^{-\a} [1+\Oh(1-x)], & \a\in(-\infty,0),\\
1, & \a\in[0,\infty),
\end{cases}      \\
\hatt u_{1,\a,\b}(0,x)&=\begin{cases}
1, & \a\in(-\infty,0),\\
2^{-\b-1}\ln((1-x)/2), & \a=0,\\
-2^{-\b-1} \a^{-1} (1-x)^{-\a} [1+\Oh(1-x)], & \a\in(0,\infty);
\end{cases}
\end{split} 
\quad \b\in\bbR.     \lb{4.10} 
\end{align}

Combining the fact \eqref{4.6} with Theorem \ref{t2.5}, $\dot T_{min,\a,\b}$ is essentially self-adjoint in 
$L^2((-1,1); r_{\a,\b} dx)$ if and only if 
$\a, \b \in \bbR \backslash (-1,1)$. Thus, boundary values for $T_{max,\a,\b}$ at $-1$ exist if and only if 
$\a \in \bbR$, $\b \in (-1,1)$, and similarly, boundary values for $T_{max,\a,\b}$ at $1$ exist if and only if 
$\a \in (-1,1)$, $\b \in \bbR$. 

Employing the principal and nonprincipal solutions \eqref{4.9}, \eqref{4.10} at $\pm 1$, according to 
\eqref{2.25}, \eqref{2.26}, generalized boundary values for $g\in\dom(T_{max,\a,\b})$ are of the form
\begin{align}
\begin{split} 
\wti g(-1)&=\begin{cases}
g(-1), & \b\in(-1,0),\\
-2^{\a+1}\lim_{x\downarrow-1}g(x)/\ln((1+x)/2), & \b=0,\\
\b 2^{\a+1}\lim_{x\downarrow-1}(1+x)^\b g(x), & \b\in(0,1),
\end{cases}    \\
{\wti g}^{\, \prime}(-1)&=\begin{cases}
g^{[1]}(-1), &\b\in(-1,0),\\
\lim_{x\downarrow-1}\big[g(x)+\wti g(-1)2^{-\a-1}\ln((1+x)/2)\big], & \b=0,\\
\lim_{x\downarrow-1}\big[g(x)-\wti g(-1)2^{-\a-1}\b^{-1}(1+x)^{-\b}\big], & \b\in(0,1);
\end{cases}
\end{split} 
\quad \a\in\bbR,     \lb{4.11} \\
\begin{split}
\wti g(1)&=\begin{cases}
g(1), & \a\in(-1,0),\\
2^{\b+1}\lim_{x\uparrow1}g(x)/\ln((1-x)/2), & \a=0,\\
-\a2^{\b+1}\lim_{x\uparrow1}(1-x)^\a g(x), & \a\in(0,1),
\end{cases}\\
{\wti g}^{\, \prime}(1)&=\begin{cases}
g^{[1]}(1), & \a\in(-1,0),\\
\lim_{x\uparrow1}\big[g(x)-\wti g(1)2^{-\b-1}\ln((1-x)/2)\big], & \a=0,\\
\lim_{x\uparrow1}\big[g(x)+\wti g(1)2^{-\b-1}\a^{-1}(1-x)^{-\a}\big], & \a\in(0,1);
\end{cases}
\end{split}
\quad \b\in\bbR.
\end{align}

As a result (cf., \eqref{2.5}, \eqref{2.10}, \eqref{2.25}, \eqref{2.26}), the minimal operator 
$T_{min,\a,\b}$ associated to $\tau_{\a,\b}$, that is, $T_{min,\a,\b} = \ol{\dot T_{min,\a,\b}}$, is thus given by 
\begin{align}
&T_{min,\a,\b} f = \tau_{\a,\b} f, \no \\
& f \in \dom(T_{min,\a,\b})=\big\{g\in L^2((-1,1); r_{\a,\b} dx) \, \big| \, g,g^{[1]}\in AC_{loc}((-1,1));    \\ 
&\hspace*{1.1cm} \wti g(-1) = {\wti g}^{\, \prime}(-1) = \wti g(1) = {\wti g}^{\, \prime}(1) = 0; \, 
\tau_{\a,\b} g\in L^2((-1,1); r_{\a,\b} dx)\big\}.  \no
\end{align}

All self-adjoint extensions of $T_{min,\a,\b}$ are thus either of the type \eqref{2.27} in the case of separated boundary conditions, or of the type \eqref{2.27A} in the case of coupled boundary conditions. In particular, the Friedrichs extension $T_{F,\a,\b}$ of $T_{min,\a,\b}$ is given as in \eqref{2.28}. 

We note that self-adjoint extensions of $T_{min,\a,\b}$ in the limit circle case at both endpoints $x=\pm1$, that is, 
$\a, \b \in [0,1)$, were studied in \cite{Fr20} and \cite{FL20}, in particular, the self-adjoint extension containing Jacobi polynomials in its domain was characterized. In addition, \cite{Fr20} and \cite{FL20} discussed the domains of powers of self-adjoint Jacobi operators. Self-adjoint extensions in the case $\a,\b \in (0,1)$ are also studied in \cite{BFL20}; the case $\a, \b \in (-1,\infty)$ was also treated by Gr\"unewald \cite{Gr74}. 

For a detailed treatment of solutions of the Jacobi differential equation and the associated hypergeometric differential equations we refer to Appendices \ref{sA}--\ref{sC}.

\section{The Regular and Limit Circle Case $\a, \b \in (-1,1)$}\lb{s5}

In this section we compute the Weyl--Titchmarsh--Kodaira and Green's function induced $m$ (resp., $M$) functions when the Jacobi differential expression $\tau_{\al, \be}$ is either in the regular or limit circle case at $\pm 1$. 

We begin by determining the solutions $\varphi_{\g,\a,\b}(z,\dott)$ and $\vartheta_{\g,\a,\b}(z,\dott)$ of $\tau_{\a,\b}u=zu,z\in\bbC$, that are subject to the conditions
\begin{align}
\begin{split}
\wti \varphi_{\g,\a,\b} (z,-1) &= - \sin(\g), \quad \wti \varphi_{\g,\a,\b}^{\, \prime} (z,-1) = \cos(\g),\\
\wti \vartheta_{\g,\a,\b} (z,-1) &= \cos(\g), \quad \;\;\,\, \wti \vartheta_{\g,\a,\b}^{\, \prime} (z,-1) = \sin(\g); 
\quad \g\in[0,\pi).    \lb{5.1} 
\end{split}
\end{align}
In particular, one can write (see Appendix \ref{sA} for the solutions $y_{j,\alpha,\beta,-1}(z,\dott)$, $j=1,2$)
\begin{align}
\varphi_{\g,\a,\b} (z,x) &= c_{\g,\varphi,1}(z)y_{1,\a,\b,-1}(z,x)+c_{\g,\varphi,2}(z)y_{2,\a,\b,-1}(z,x),   \lb{5.2}\\
\vartheta_{\g,\a,\b} (z,x) &=c_{\g,\vartheta,1}(z)y_{1,\a,\b,-1}(z,x)+c_{\g,\vartheta,2}(z)y_{2,\a,\b,-1}(z,x);\ \ z\in\bbC,\ x\in(-1,1),   
\no 
\end{align}
where the coefficients $c_{\g,\varphi,1}(z),c_{\g,\varphi,2}(z),c_{\g,\vartheta,1}(z),c_{\g,\vartheta,2}(z)\in\bbC$ are given by 
\begin{align}
\begin{split}
c_{\g,\varphi,1}(z)&=\f{-\sin(\g)\wti y_{2,\a,\b,-1}^{\, \prime}(z,-1)-\cos(\g)\wti y_{2,\a,\b,-1}(z,-1)}{\wti y_{1,\a,\b,-1}(z,-1)\wti y_{2,\a,\b,-1}^{\, \prime}(z,-1)-\wti y_{1,\a,\b,-1}^{\, \prime}(z,-1)\wti y_{2,\a,\b,-1}(z,-1)},\\[1mm]
c_{\g,\varphi,2}(z)&=\f{\cos(\g)\wti y_{1,\a,\b,-1}(z,-1)+\sin(\g)\wti y_{1,\a,\b,-1}^{\, \prime}(z,-1)}{\wti y_{1,\a,\b,-1}(z,-1)\wti y_{2,\a,\b,-1}^{\, \prime}(z,-1)-\wti y_{1,\a,\b,-1}^{\, \prime}(z,-1)\wti y_{2,\a,\b,-1}(z,-1)},\\[1mm]
c_{\g,\vartheta,1}(z)&=\f{\cos(\g)\wti y_{2,\a,\b,-1}^{\, \prime}(z,-1)-\sin(\g)\wti y_{2,\a,\b,-1}(z,-1)}{\wti y_{1,\a,\b,-1}(z,-1)\wti y_{2,\a,\b,-1}^{\, \prime}(z,-1)-\wti y_{1,\a,\b,-1}^{\, \prime}(z,-1)\wti y_{2,\a,\b,-1}(z,-1)},\\[1mm]
c_{\g,\vartheta,2}(z)&=\f{\sin(\g)\wti y_{1,\a,\b,-1}(z,-1)-\cos(\g)\wti y_{1,\a,\b,-1}^{\, \prime}(z,-1)}{\wti y_{1,\a,\b,-1}(z,-1)\wti y_{2,\a,\b,-1}^{\, \prime}(z,-1)-\wti y_{1,\a,\b,-1}^{\, \prime}(z,-1)\wti y_{2,\a,\b,-1}(z,-1)}.     \lb{5.3}
\end{split}
\end{align}

\subsection{The Regular and Limit Circle Case for Separated Boundary Conditions}\lb{s5a}
\hfill

Given these preparations, one obtains the principal result on $m$-functions corresponding to separated boundary conditions of this section:

\begin{theorem} \lb{t5.1}
Let $T_{\g,\d,\a,\b}$, $\g, \d \in [0,\pi)$, be the self-adjoint operator associated with $\tau_{\a,\b}$ applying 
$\g$ -boundary conditions at $x=-1$ and $\d$-boundary conditions at $x=1$ in the regular or limit circle case 
$\a, \b \in (-1,1)$. Then the associated Weyl--Titchmarsh--Kodaira $m$-function is of the form 
\begin{align}\no
m_{\g,\d,\a,\b}(z)=&\begin{cases}
\big[\b 2^{\a+1}\cos(\g)(\cos(\d)\wti y_{1,\a,\b,-1}(z,1)+\sin(\d)\wti y_{1,\a,\b,-1}^{\, \prime}(z,1))\\
\quad -\sin(\g)(\cos(\d)\wti y_{2,\a,\b,-1}(z,1)+\sin(\d)\wti y_{2,\a,\b,-1}^{\, \prime}(z,1))\big]\\
\times\big[\b 2^{\a+1}\sin(\g)(\cos(\d)\wti y_{1,\a,\b,-1}(z,1)+\sin(\d)\wti y_{1,\a,\b,-1}^{\, \prime}(z,1))\\
\quad +\cos(\g)(\cos(\d)\wti y_{2,\a,\b,-1}(z,1)+\sin(\d)\wti y_{2,\a,\b,-1}^{\, \prime}(z,1))\big]^{-1},\\
\hspace{7cm} \b\in(-1,0),\\[1mm]
\big[ -2^{\a+1}\sin(\g)(\cos(\d)\wti y_{1,\a,\b,-1}(z,1)+\sin(\d)\wti y_{1,\a,\b,-1}^{\, \prime}(z,1))\\
\quad +\cos(\g)(\cos(\d)\wti y_{2,\a,\b,-1}(z,1)+\sin(\d)\wti y_{2,\a,\b,-1}^{\, \prime}(z,1))\big]\\
\times\big[2^{\a+1}\cos(\g)(\cos(\d)\wti y_{1,\a,\b,-1}(z,1)+\sin(\d)\wti y_{1,\a,\b,-1}^{\, \prime}(z,1))\\
\quad +\sin(\g)(\cos(\d)\wti y_{2,\a,\b,-1}(z,1)+\sin(\d)\wti y_{2,\a,\b,-1}^{\, \prime}(z,1))\big]^{-1},\\[1mm]
\hspace{7.8cm} \b=0,\\[1mm]
\big[\b 2^{\a+1}\sin(\g)(\cos(\d)\wti y_{1,\a,\b,-1}(z,1)+\sin(\d)\wti y_{1,\a,\b,-1}^{\, \prime}(z,1))\\
\quad +\cos(\g)(\cos(\d)\wti y_{2,\a,\b,-1}(z,1)+\sin(\d)\wti y_{2,\a,\b,-1}^{\, \prime}(z,1))\big]\\
\times\big[-\b 2^{\a+1}\cos(\g)(\cos(\d)\wti y_{1,\a,\b,-1}(z,1)+\sin(\d)\wti y_{1,\a,\b,-1}^{\, \prime}(z,1))\\
\quad +\sin(\g)(\cos(\d)\wti y_{2,\a,\b,-1}(z,1)+\sin(\d)\wti y_{2,\a,\b,-1}^{\, \prime}(z,1))\big]^{-1},\\[1mm]
\hspace{7.35cm} \b\in(0,1); 
\end{cases}\\
&\hspace{3cm} \a\in(-1,1),\ \g,\d\in[0,\pi),\ z\in\rho(T_{\g,\d,\a,\b}). \lb{5.4}
\end{align}
The $($necessarily simple\,$)$ poles of $m_{\g,\d,\a,\b}$ occur precisely at the $($necessarily simple\,$)$ eigenvalues 
of $T_{\g,\d,\a,\b}$. 
\end{theorem}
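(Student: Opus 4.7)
\textbf{Proof plan for Theorem \ref{t5.1}.} The natural starting point is the closed form \eqref{3.18}, which in the present setting---where $\tau_{\a,\b}$ is in the limit circle case at $x=1$ for every $\a\in(-1,1)$---reads
\begin{equation*}
m_{\g,\d,\a,\b}(z) = -\frac{\cos(\d)\wti\vartheta_{\g,\a,\b}(z,1) + \sin(\d)\wti\vartheta_{\g,\a,\b}^{\,\prime}(z,1)}{\cos(\d)\wti\varphi_{\g,\a,\b}(z,1) + \sin(\d)\wti\varphi_{\g,\a,\b}^{\,\prime}(z,1)}.
\end{equation*}
Because the generalized boundary-value maps $g\mapsto\wti g(1)$ and $g\mapsto\wti g^{\,\prime}(1)$ are $\bbC$-linear, expanding $\varphi_{\g,\a,\b}(z,\dott)$ and $\vartheta_{\g,\a,\b}(z,\dott)$ in the basis $\{y_{1,\a,\b,-1}(z,\dott),y_{2,\a,\b,-1}(z,\dott)\}$ as in \eqref{5.2} immediately yields
\begin{equation*}
\wti\varphi_{\g,\a,\b}(z,1) = c_{\g,\varphi,1}(z)\wti y_{1,\a,\b,-1}(z,1) + c_{\g,\varphi,2}(z)\wti y_{2,\a,\b,-1}(z,1),
\end{equation*}
together with analogous expressions for $\wti\varphi_{\g,\a,\b}^{\,\prime}(z,1)$, $\wti\vartheta_{\g,\a,\b}(z,1)$, and $\wti\vartheta_{\g,\a,\b}^{\,\prime}(z,1)$. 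Substituting these into the displayed formula produces a ratio of two linear combinations of $\cos(\d)\wti y_{j,\a,\b,-1}(z,1)+\sin(\d)\wti y_{j,\a,\b,-1}^{\,\prime}(z,1)$, $j=1,2$, whose shape already matches \eqref{5.4}.

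The remaining task is to evaluate the coefficients $c_{\g,\varphi,j}(z),\,c_{\g,\vartheta,j}(z)$ of \eqref{5.3}, i.e., to determine the values of $\wti y_{j,\a,\b,-1}(z,-1)$ and $\wti y_{j,\a,\b,-1}^{\,\prime}(z,-1)$. For this one invokes the leading asymptotics of $y_{1,\a,\b,-1}$ and $y_{2,\a,\b,-1}$ at $x=-1$ recorded in Appendices \ref{sA} and \ref{sC}, which by construction are compatible with the principal/nonprincipal normalizations $u_{-1,\a,\b}(0,\dott),\hatt u_{-1,\a,\b}(0,\dott)$ of \eqref{4.9}, and then substitutes them into the endpoint formulas \eqref{4.11}. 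The latter take three structurally distinct forms for $\b\in(-1,0)$, $\b=0$, and $\b\in(0,1)$; this is exactly the origin of the three cases in \eqref{5.4}, and the prefactors $\b\,2^{\a+1}$ (respectively $2^{\a+1}$ when $\b=0$) are simply the normalization constants inherited from \eqref{4.9}. A convenient observation is that the common Wronskian denominator in \eqref{5.3} is $z$-independent, so it cancels between the numerator and denominator of $m_{\g,\d,\a,\b}(z)$ and no nontrivial Wronskian identity has to be invoked.

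The assertion about poles and eigenvalues follows from the general machinery of Section \ref{s3}: in the limit circle case at both endpoints Hypothesis \ref{h3.1} is automatic, $\sigma(T_{\g,\d,\a,\b})$ is purely discrete and simple (Theorem \ref{t2.12}), $m_{\g,\d,\a,\b}$ is a Nevanlinna--Herglotz function by \eqref{3.24}, and the pole/residue equivalences recalled after \eqref{3.18} identify its simple poles bijectively with the simple eigenvalues of $T_{\g,\d,\a,\b}$.

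The only real difficulty I anticipate is bookkeeping: in each of the three $\b$-regimes one must carefully match the Appendix \ref{sA} normalizations of $y_{1,\a,\b,-1},y_{2,\a,\b,-1}$ with the principal/nonprincipal conventions \eqref{4.9}, and keep track of the accompanying factors of $2^{\a+1}$ and $\b$. Once this correspondence is nailed down, the derivation of \eqref{5.4} reduces to a direct $2\times 2$ linear-algebra calculation with no deeper structural input required.
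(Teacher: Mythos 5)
Your proposal is correct and follows essentially the same route as the paper: the paper's proof likewise substitutes the generalized boundary values of $y_{j,\a,\b,-1}(z,\dott)$ at $x=-1$ (recorded in \eqref{C.13}, which is precisely the output of the normalization bookkeeping you describe via \eqref{4.9} and \eqref{4.11}) into \eqref{5.3}, combines with \eqref{5.2}, and then applies \eqref{3.18}. Your observation that the Wronskian denominator in \eqref{5.3} is $z$-independent (equal to $\mp\b 2^{\a+1}$ or $2^{\a+1}$ in the three regimes) is accurate and harmless, since it cancels in the ratio in any case.
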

\begin{proof}
Substituting the boundary values \eqref{C.13} into \eqref{5.3} and combining this with equation \eqref{5.2} yields $\varphi_{\g,\a,\b} (z,x)$ and $\vartheta_{\g,\a,\b} (z,x)$, $\g\in[0,\pi)$. Thus, \eqref{3.18} implies the form of the general $m$-function as displayed in \eqref{5.4}.
\end{proof}

Theorem \ref{t5.1} permits one to efficiently compute any $m$-function for $\a,\b\in(-1,1)$ by substituting the appropriate boundary values \eqref{C.14}--\eqref{C.16} into \eqref{5.4}. In particular, the special Legendre case, 
$\a=\b=0$, to be reconsidered in Example \ref{e5.3} {\boldmath $\mathbf{(II)}$} below, has frequently been discussed in the literature, 
see, for instance, \cite{GLN20} and the extensive list of references cited therein.

\begin{remark}
We mention a few other special cases of interest other than Legendre. The Gegenbauer, or ultraspherical, equation (see, e.g., \cite[Ch. 22]{AS72}, \cite[Ch.~18]{OLBC10}, \cite[Ch. IV]{Sz75}) can be realized by choosing the parameters $\a=\b=\mu-1/2$, noting  at the endpoints $x=\pm1$, $\tau_\mu$ is regular for $\mu\in(-1/2,1/2)$, in the limit circle case for $\mu\in[1/2,3/2)$, and in the limit point case for $\mu\in\bbR\backslash(-1/2,3/2)$. In particular, choosing $\mu=1/2$ we arrive at the Legendre equation once again. After the appropriate change in parameter, \eqref{5.4} describes the form of the $m$-function for the new parameter $\mu\in(-1/2,3/2)$. For $\mu\in\bbR\backslash(-1/2,3/2)$, see Theorem \ref{t7.1}. 

The Chebyshev equations of the first and second kinds are two more important special cases covered in Example \ref{e5.3} $\mathbf{(I)}$ and $\mathbf{(III)}$ respectively. The Chebyshev equation of the first kind is realized by choosing $\mu=0$ in the Gegenbauer equation, or $\a=\b=-1/2$ in the Jacobi equation (see, e.g., \cite[Ch. 22]{AS72}, \cite[Ch.~18]{OLBC10}, \cite[Ch. IV]{Sz75}), whereas the Chebyshev equation of the second kind is realized by choosing $\mu=1$ in the Gegenbauer equation, or $\a=\b=1/2$ in the Jacobi equation (see, e.g., \cite[Ch. 22]{AS72}, \cite[Ch.~18]{OLBC10}, \cite[Ch. IV]{Sz75}).

See Appendix \ref{sE} for more details.
\hfill$\diamond$
\end{remark}

In the following we employ the abbreviation
\begin{equation}
\sigma_{\a,\b}(z) = \big[(1+\a+\b)^2 + 4z\big]^{1/2}, \quad \a, \b \in \bbR, \; z \in \bbC.   \lb{5.5} 
\end{equation}

The following example provides an illustration of Theorem \ref{t5.1} while applying \eqref{C.14}--\eqref{C.16}, \eqref{5.1}--\eqref{5.4}. Throughout, we recall that the $($necessarily simple\,$)$ poles of $m_{\g,\d,\a,\b}$ occur precisely at the $($necessarily simple\,$)$ eigenvalues of $T_{\g,\d,\a,\b}$.

\begin{example}\lb{e5.3}
When boundary conditions are chosen such that the Jacobi polynomials, $P_n^{\a,\b}(\dott)$ $($see \eqref{A.31}$)$, satisfy them for the given choice of parameters $\a,\b$, one verifies the spectrum is given by $\{n(n+1+\a+\b)\}_{n\in\bbN_0}$ in accordance with Remark \ref{rA1} $($see $\mathbf{(I)}$--$\mathbf{(III)}$, $\mathbf{(VII)}$, $\mathbf{(VIII)}$ below$)$.\\[1mm]
\noindent 
{\boldmath $\mathbf{(I)}$ {\bf The case} $\a,\b\in(-1,0)$$:$}  \\[1mm] 
\noindent The Neumann extension $\g=\d=\pi/2$, $T_{N,\a,\b} = T_{\f{\pi}{2},\f{\pi}{2},\a,\b}$, yields for $z\in\rho(T_{N,\a,\b})$,
\begin{align}
&m_{\frac{\pi}{2},\frac{\pi}{2},\a,\b}(z)=\f{-\wti y_{2,\a,\b,-1}^{\, \prime}(z,1)}{\b2^{\a+1}\wti y_{1,\a,\b,-1}^{\, \prime}(z,1)}  \no \\
& \quad =\f{-\Gamma(1-\b)\Gamma([1+\a+\b+\sigma_{\a,\b}(z)]/2)
\Gamma([1+\a+\b-\sigma_{\a,\b}(z)]/2)}{\b 2^{1+\a+\b}\Gamma(1+\b) 
\Gamma([1+\a-\b+\sigma_{\a,\b}(z)]/2)\Gamma([1+\a-\b-\sigma_{\a,\b}(z)]/2)},     \no \\
&\quad \sigma(T_{N,\a,\b}) = \{n(n+1+\a+\b)\}_{n\in\bbN_0}; \quad \a,\b\in(-1,0).
\end{align}
\noindent 
{\boldmath $\mathbf{(II)}$ {\bf The case} $\a=\b=0$ $(${\bf Legendre}$)$$:$}\\[1mm] 
The Friedrichs extension, $\g=\d=0$, $T_{F,Leg} = T_{0,0,0,0}$, yields for $z\in\rho(T_{F,Leg})$,
\begin{align}
m_{0,0,Leg}(z)&=\f{\wti y_{2,0,0,-1}(z,1)}{2\wti y_{1,0,0,-1}(z,1)}    \no \\
&=-\gamma_E- \{\psi([1+\sigma_{0,0}(z)]/2)+\psi([1-\sigma_{0,0}(z)]/2)\}/2   \no \\
&=-\gamma_E-\psi([1+\sigma_{0,0}(z)]/2) + (\pi/2)\cot(\pi [1- \sigma_{0,0}(z)]/2),  \no \\
\sigma(T_{F,Leg}) &= \{n(n+1)\}_{n\in\bbN_0}; \quad \al = \b = 0,  \lb{5.6}
\end{align}
in agreement with \cite[eq. (6.46)]{GLN20}. Here we used the reflection formula 
$($cf. \cite[eq. 6.3.6]{AS72}$)$
\begin{align}
\psi(1-z)-\psi(z)=\pi\cot(\pi z). 
\end{align}
\noindent 
{\boldmath $\mathbf{(III)}$ {\bf The case} $\a,\b\in(0,1)$$:$}\\[1mm] 
The Friedrichs extension, $\g=\d=0$, $T_{F,\a,\b} = T_{0,0,\a,\b}$, yields for $z\in\rho(T_{F,\a,\b})$,
\begin{align}
& m_{0,0,\a,\b}(z)=\f{\wti y_{2,\a,\b,-1}(z,1)}{-\b2^{\a+1}\wti y_{1,\a,\b,-1}(z,1)}   \no \\
& \quad = \f{-\Gamma(1-\b)\Gamma([1+\a+\b+\sigma_{\a,\b}(z)]/2)
\Gamma([1+\a+\b-\sigma_{\a,\b}(z)]/2)}{\b 2^{1+\a+\b}\Gamma(1+\b)
\Gamma([1+\a-\b+\sigma_{\a,\b}(z)]/2)\Gamma([1+\a-\b-\sigma_{\a,\b}(z)]/2)},    \no\\
&\ \ \sigma(T_{F,\a,\b}) = \{n(n+1+\a+\b)\}_{n\in\bbN_0};  \quad \a,\b \in(0,1).
\end{align}
\noindent 
{\boldmath $\mathbf{(IV)}$ {\bf The case} $\a \in (0,1), \; \b \in (-1,0)$$:$}\\[1mm]
The Friedrichs extension, $\g=\d=0$, $T_{F,\a,\b} = T_{0,0,\a,\b}$, yields for $z\in\rho(T_{F,\a,\b})$,
\begin{align}
& m_{0,0,\a,\b}(z)=\f{\b2^{\a+1}\wti y_{1,\a,\b,-1}(z,1)}{\wti y_{2,\a,\b,-1}(z,1)}   \no \\
& \quad = \f{\b 2^{1+\a+\b}\Gamma(1+\b)\Gamma([1+\a-\b+\sigma_{\a,\b}(z)]/2)
\Gamma([1+\a-\b-\sigma_{\a,\b}(z)]/2)}{\Gamma(1-\b)
\Gamma([1+\a+\b+\sigma_{\a,\b}(z)]/2)\Gamma([1+\a+\b-\sigma_{\a,\b}(z)]/2)},    \no\\
&\ \ \sigma(T_{F,\a,\b}) = \{(n-\b)(n+1+\a)\}_{n\in\bbN_0};  \quad \a \in (0,1), \; \b \in (-1,0).
\end{align}
\noindent 
{\boldmath $\mathbf{(V)}$ {\bf The case} $\a \in (-1,0), \; \b \in (0,1)$$:$}\\[1mm]
The Friedrichs extension, $\g=\d=0$, $T_{F,\a,\b} = T_{0,0,\a,\b}$, yields for $z\in\rho(T_{F,\a,\b})$,
\begin{align}
& m_{0,0,\a,\b}(z)=\f{\wti y_{2,\a,\b,-1}(z,1)}{-\b2^{\a+1}\wti y_{1,\a,\b,-1}(z,1)}   \no \\
& \quad = \f{-\Gamma(1-\b)\Gamma([1-\a+\b+\sigma_{\a,\b}(z)]/2)
\Gamma([1-\a+\b-\sigma_{\a,\b}(z)]/2)}{\b 2^{1+\a+\b}\Gamma(1+\b)
\Gamma([1-\a-\b+\sigma_{\a,\b}(z)]/2)\Gamma([1-\a-\b-\sigma_{\a,\b}(z)]/2)},    \no\\
&\ \ \sigma(T_{F,\a,\b}) = \{(n-\a)(n+1+\b)\}_{n\in\bbN_0};  \quad \a \in (-1,0); \; \b \in (0,1).
\end{align}
\noindent 
{\boldmath $\mathbf{(VI)}$ {\bf The case} $\a = 0, \; \b \in (-1,0)$$:$}\\[1mm]
The Friedrichs extension, $\g=\d=0$, $T_{F,0,\b} = T_{0,0,0,\b}$, yields for $z\in\rho(T_{F,0,\b})$,
\begin{align}
m_{0,0,0,\b}(z)&=\f{\b2\wti y_{1,0,\b,-1}(z,1)}{\wti y_{2,0,\b,-1}(z,1)}   \no \\
&= \f{\b 2^{1+\b}\Gamma(1+\b)\Gamma([1-\b+\sigma_{0,\b}(z)]/2)
\Gamma([1-\b-\sigma_{0,\b}(z)]/2)}{\Gamma(1-\b)
\Gamma([1+\b+\sigma_{0,\b}(z)]/2)\Gamma([1+\b-\sigma_{0,\b}(z)]/2)},    \no\\
\sigma(T_{F,0,\b}) &= \{(n-\b)(n+1)\}_{n\in\bbN_0}; \quad \a = 0, \; \b \in (-1,0).
\end{align}
\noindent 
{\boldmath $\mathbf{(VII)}$ {\bf The case} $\a =0, \; \b \in (0,1)$$:$}\\[1mm]
The Friedrichs extension, $\g=\d=0$, $T_{F,0,\b} = T_{0,0,0,\b}$, yields for $z\in\rho(T_{F,0,\b})$,
\begin{align}
m_{0,0,0,\b}(z)&=\f{\wti y_{2,0,\b,-1}(z,1)}{-\b2\wti y_{1,0,\b,-1}(z,1)}   \no \\
& = \f{-\Gamma(1-\b)\Gamma([1+\b+\sigma_{0,\b}(z)]/2)
\Gamma([1+\b-\sigma_{0,\b}(z)]/2)}{\b 2^{1+\b}\Gamma(1+\b)
\Gamma([1-\b+\sigma_{0,\b}(z)]/2)\Gamma([1-\b-\sigma_{0,\b}(z)]/2)},    \no\\
\sigma(T_{F,0,\b}) &= \{n(n+1+\b)\}_{n\in\bbN_0};  \quad \a =0, \; \b \in (0,1).
\end{align}
\noindent 
{\boldmath $\mathbf{(VIII)}$ {\bf The case $\a \in (0,1), \; \b = 0$\,:}}\\[1mm]
The Friedrichs extension, $\g=\d=0$, $T_{F,\a,0} = T_{0,0,\a,0}$, yields for $z\in\rho(T_{F,\a,0})$,
\begin{align}
m_{0,0,\a,0}(z)&=\f{\wti y_{2,\a,0,-1}(z,1)}{2^{\a+1}\wti y_{1,\a,0,-1}(z,1)}   \no \\
&= -2^{-1-\a}[2\gamma_E+\psi([1+\a+\sigma_{\a,0}(z)]/2)+\psi([1+\a-\sigma_{\a,0}(z)]/2)],    \no\\
\sigma(T_{F,\a,0}) &= \{n(n+1+\a)\}_{n\in\bbN_0};  \quad \a \in (0,1); \; \b = 0.
\end{align}
\noindent 
{\boldmath $\mathbf{(IX)}$ {\bf The case $\a \in (-1,0), \; \b = 0$\,:}}\\[1mm]
The Friedrichs extension, $\g=\d=0$, $T_{F,\a,0} = T_{0,0,\a,0}$, yields for $z\in\rho(T_{F,\a,0})$,
\begin{align}
m_{0,0,\a,0}(z)&=\f{\wti y_{2,\a,\b,-1}(z,1)}{2^{\a+1}\wti y_{1,\a,\b,-1}(z,1)}   \no \\
&= -2^{-1-\a}[2\gamma_E+\psi([1-\a+\sigma_{\a,0}(z)]/2)+\psi([1-\a-\sigma_{\a,0}(z)]/2)], \no \\
\sigma(T_{F,\a,0}) &= \{(n-\a)(n+1)\}_{n\in\bbN_0};  \quad \a \in (-1,0), \; \b = 0.
\end{align}
\end{example}
We note that the relations in \eqref{4.3} can be extended to the Hilbert space setting using the unitary operators $U_{\pm \b}$ and $V_{\pm \a}$:
\begin{align} \lb{5.15}
\begin{split}
& U_{\pm \b}\colon \begin{cases} L^2((-1,1);r_{\a,\pm\b}dx) \rightarrow L^2((-1,1);r_{\a,\mp \b}dx),   \\
 f \mapsto (1+ \,\cdot\,)^{\pm \b}f,
 \end{cases}     \\[1mm]
& V_{\pm \a}\colon \begin{cases} L^2((-1,1);r_{\pm\a,\b}dx) \rightarrow L^2((-1,1);r_{\mp\a,\b}dx),     \\
  f \mapsto (1+ \,\cdot\,)^{\pm \a}f,
\end{cases} 
\end{split}
\end{align}
resulting in 
\begin{align}
\begin{split} 
& (1+x)^{-\b} T_{\g,\d,\a,-\b} (1+x)^\b = T_{\g,\d,\a,\b} + (1+\a)\b,   \\
&  (1-x)^{-\a} T_{\g,\d,-\a,\b} (1-x)^\a = T_{\g,\d,\a,\b} + (1+\b)\a,    \\
&  (1-x)^{-\a}(1+x)^{-\b} T_{\g,\d,-\a,-\b} (1-x)^\a(1+x)^\b = T_{\g,\d,\a,\b} + \a+\b;   \\
& \hspace*{7.8cm} \g, \d \in \{ 0, \pi/2 \}.
\end{split} 
\end{align}
These relations can be used to explain the patterns found in Example \ref{e5.3}. 

\subsection{The Regular and Limit Circle Case for Coupled Boundary Conditions} \lb{s5b}
\hfill

We now turn to the case of coupled boundary conditions as described in \eqref{2.27A}. Following \cite[Ch.~13]{GNZ23} (see also \cite{GN22}, \cite{GN22a}) and utilizing generalized boundary values, we introduce the $2\times2$ matrix-valued Green's function induced $M$-function corresponding to $T_{\eta,R,\a,\b}$ via 
\begin{align}
& M_{\eta,R,\a,\b}(z,-1)      \no \\
& \quad =\Small\begin{pmatrix}
\wti G_{\eta,R,\a,\b}(z,-1,-1)   &    2^{-1}\big(\big[\wti \partial_1+\wti \partial_2\big] G_{\eta,R,\a,\b}\big)^{\wti{}} (z,-1,-1) \\[1mm]
2^{-1}\big(\big[\wti \partial_1+\wti \partial_2\big] G_{\eta,R,\a,\b}\big)^{\wti{}} (z,-1,-1)   &    \big(\wti \partial_1 \wti \partial_2 G_{\eta,R,\a,\b}\big) (z,-1,-1)
\end{pmatrix}   \no\\
&\quad =
\Small\begin{pmatrix}
\begin{matrix}
R_{2,2}\wti\varphi_{0,\a,\b}(z,1)-R_{1,2} \wti\varphi^{\, \prime}_{0,\a,\b}(z,1)\\
\quad
\end{matrix}   &
\begin{matrix}
2^{-1}\big[R_{1,1} \wti\varphi^{\, \prime}_{0,\a,\b}(z,1)-R_{2,1} \wti\varphi_{0,\a,\b}(z,1) \\
\quad-R_{2,2} \wti\vartheta_{0,\a,\b}(z,1)+R_{1,2} \wti\vartheta^{\, \prime}_{0,\a,\b}(z,1)\big]
\end{matrix}\\[4mm]
\begin{matrix}
2^{-1}\big[R_{1,1} \wti\varphi^{\, \prime}_{0,\a,\b}(z,1)-R_{2,1} \wti\varphi_{0,\a,\b}(z,1) \\
\quad-R_{2,2} \wti\vartheta_{0,\a,\b}(z,1) + R_{1,2} \wti\vartheta^{\, \prime}_{0,\a,\b}(z,1)\big]
\end{matrix}  & 
\begin{matrix}
R_{2,1}\wti\vartheta_{0,\a,\b}(z,1)-R_{1,1} \wti\vartheta^{\, \prime}_{0,\a,\b}(z,1)\\
\quad
\end{matrix} 
\end{pmatrix}   \no\\
&\qquad\times \big[-e^{i\eta}\big] \big[\wti F_{\eta,R,\a,\b}(z)\big]^{-1}, \quad  \a,\b\in(-1,1),\ z\in\rho(T_{\eta,R,\a,\b}),    \lb{8.1}
\end{align}
where $\eta\in[0,\pi)$, $R\in SL(2,\bbR)$, and the Green's function, $G_{\eta,R,\a,\b}(z,\dott,\dott)$, of $T_{\eta,R,\a,\b}$ is given by
\begin{align}
& G_{\eta,R,\a,\b}(z,x,x')=\dfrac{-e^{i\eta}}{\wti F_{\eta,R,\a,\b}(z)}  \no \\
&\qquad\times\Big\{[-R_{1,2}\wti\varphi^{\, \prime}_{0,\a,\b}(z,1)+R_{2,2} \wti\varphi_{0,\a,\b}(z,1)]\vartheta_{0,\a,\b}(z,x)\vartheta_{0,\a,\b}(z,x')  \no \\
&\hspace*{1.2cm} +[-R_{1,1}\wti\vartheta^{\, \prime}_{0,\a,\b}(z,1)+R_{2,1} \wti\vartheta_{0,\a,\b}(z,1)]\varphi_{0,\a,\b}(z,x)\varphi_{0,\a,\b}(z,x')  \no \\
&\hspace*{1.2cm} + [e^{-i\eta}+R_{1,2}\wti\vartheta^{\, \prime}_{0,\a,\b}(z,1)-R_{2,2} \wti\vartheta_{0,\a,\b}(z,1)]\vartheta_{0,\a,\b}(z,x)\varphi_{0,\a,\b}(z,x')  \no \\
&\hspace*{1.2cm} +[-e^{-i\eta}+R_{1,1}\wti\varphi^{\, \prime}_{0,\a,\b}(z,1)-R_{2,1} \wti\varphi_{0,\a,\b}(z,1)]\varphi_{0,\a,\b}(z,x)\vartheta_{0,\a,\b}(z,x')\Big\}  \no \\
&\quad +\begin{cases}
0, & a < x\leq x' < b,\\
[\vartheta_{0,\a,\b}(z,x)\varphi_{0,\a,\b}(z,x')-\varphi_{0,\a,\b}(z,x)\vartheta_{0,\a,\b}(z,x')], & a < x' \leq x < b,
\end{cases}     \no \\
&\hspace*{9cm} z\in\rho(T_{\eta,R}),  
\end{align}
with
\begin{align}
\begin{split}
\wti F_{\eta,R,\a,\b}(z)&=-e^{i\eta}[ R_{1,1}\wti\varphi^{\, \prime}_{0,\a,\b}(z,1)+R_{2,2} \wti\vartheta_{0,\a,\b}(z,1)-R_{2,1}\wti\varphi_{0,\a,\b}(z,1)    \\
&\hspace*{1.3cm}-R_{1,2}\wti\vartheta^{\, \prime}_{0,\a,\b}(z,1)-2\cos(\eta)],     \lb{8.3}\\
\varphi_{0,\a,\b}(z,x)&=\begin{cases}
-\b^{-1} 2^{-\a-1} y_{2,\a,\b,-1}(z,x), & \b\in(-1,0),\\
y_{1,\a,0,-1}(z,x), &  \b=0,\\
y_{1,\a,\b,-1}(z,x), &  \b\in(0,1),
\end{cases}      \\
\vartheta_{0,\a,\b}(z,x)&=\begin{cases}
y_{1,\a,\b,-1}(z,x), & \b\in(-1,0),\\
-2^{-\a-1} y_{2,\a,0,-1}(z,x), &  \b=0,\\
\b^{-1} 2^{-\a-1} y_{2,\a,\b,-1}(z,x), &  \b\in(0,1),
\end{cases}
\end{split}  \qquad \a\in\bbR.
\end{align}
We note that $M_{\eta,R,\a,\b}(\dott,-1)$ is a $2\times2$ matrix-valued Nevanlinna--Herglotz function as shown in \cite{GN22}.

Next, we discuss in detail two important cases of coupled boundary conditions: the general $\eta$-periodic and Krein--von Neumann extensions.

In the $\eta$-periodic case where $R=I_2$, $\eta\in[0,\pi),$ (see \cite{GN22a}) one has from \eqref{8.1} and \eqref{8.3},
\begin{align}
M_{\eta,I_2,\a,\b}(z,-1) & =\Small\begin{pmatrix}
\wti\varphi_{0,\a,\b}(z,1)  &
2^{-1}\big[ \wti\varphi^{\, \prime}_{0,\a,\b}(z,1) -\wti\vartheta_{0,\a,\b}(z,1)\big]\\
2^{-1}\big[\wti\varphi^{\, \prime}_{0,\a,\b}(z,1)- \wti\vartheta_{0,\a,\b}(z,1)\big]
  & 
-\wti\vartheta^{\, \prime}_{0,\a,\b}(z,1)
\end{pmatrix}   \no  \\
&\quad\; \times [\wti\varphi^{\, \prime}_{0,\a,\b}(z,1)+ \wti\vartheta_{0,\a,\b}(z,1)-2\cos(\eta)]^{-1},  \lb{8.4} \\
& \hspace*{6.5mm} \a,\b\in(-1,1),\, \eta\in[0,\pi),\, z\in\rho(T_{\eta,I_2,\a,\b}).  \no
\end{align}
The principal result on $\eta$-periodic Green's function induced $M$-functions of this section can then be written as follows:

\begin{theorem} \lb{t8.1}
Let $T_{\eta,I_2,\a,\b}$, be the self-adjoint operator associated with $\tau_{\a,\b}$ with $\eta$-periodic coupled boundary conditions, $\eta\in[0,\pi)$, 
$\a, \b \in (-1,1)$. Then the associated Green's function induced $M$-function is of the form 
\begin{align}\no
& M_{\eta,I_2,\a,\b}(z,-1)\\
& =\begin{cases}
\Small\begin{pmatrix}
-\b^{-1} 2^{-\a-1} \wti y_{2,\a,\b,-1}(z,1)  &
\begin{matrix}
2^{-1}\big[ -\b^{-1} 2^{-\a-1} \wti y^{\, \prime}_{2,\a,\b,-1}(z,1) \\
\hfill-\wti y_{1,\a,\b,-1}(z,1)\big]
\end{matrix}\\
\begin{matrix}
2^{-1}\big[ -\b^{-1} 2^{-\a-1} \wti y^{\, \prime}_{2,\a,\b,-1}(z,1) \\
\hfill-\wti y_{1,\a,\b,-1}(z,1)\big]
\end{matrix}
  & 
-\wti y^{\, \prime}_{1,\a,\b,-1}(z,1)
\end{pmatrix} \\
\ \ \times [-\b^{-1} 2^{-\a-1} \wti y^{\, \prime}_{2,\a,\b,-1}(z,1)+ \wti y_{1,\a,\b,-1}(z,1)-2\cos(\eta)]^{-1}, & \b\in(-1,0),\\[3mm]
\Small\begin{pmatrix}
\wti y_{1,\a,0,-1}(z,1) &
\begin{matrix}
2^{-1}\big[2^{-\a-1} \wti y_{2,\a,0,-1}(z,1) \\
\hfill +\wti y^{\, \prime}_{1,\a,0,-1}(z,1)\big]
\end{matrix}\\
\begin{matrix}
2^{-1}\big[2^{-\a-1} \wti y_{2,\a,0,-1}(z,1) \\
\hfill +\wti y^{\, \prime}_{1,\a,0,-1}(z,1)\big]
\end{matrix}
  & 
2^{-\a-1} \wti y^{\, \prime}_{2,\a,0,-1}(z,1)
\end{pmatrix} \\
\quad\ \times [\wti y^{\, \prime}_{1,\a,0,-1}(z,1) -2^{-\a-1} \wti y_{2,\a,0,-1}(z,1)-2\cos(\eta)]^{-1}, & \b=0,\\[3mm]
\Small\begin{pmatrix}
\wti y_{1,\a,\b,-1}(z,1)  &
\begin{matrix}
2^{-1}\big[-\b^{-1} 2^{-\a-1} \wti y_{2,\a,\b,-1}(z,1) \\
\hfill +\wti y^{\, \prime}_{1,\a,\b,-1}(z,1)\big]
\end{matrix}\\
\begin{matrix}
2^{-1}\big[-\b^{-1} 2^{-\a-1} \wti y_{2,\a,\b,-1}(z,1) \\
\hfill +\wti y^{\, \prime}_{1,\a,\b,-1}(z,1)\big]
\end{matrix}
  & 
-\b^{-1} 2^{-\a-1} \wti y^{\, \prime}_{2,\a,\b,-1}(z,1)
\end{pmatrix} \\
\quad\ \times [\wti y^{\, \prime}_{1,\a,\b,-1}(z,1)+ \b^{-1} 2^{-\a-1} \wti y_{2,\a,\b,-1}(z,1)-2\cos(\eta)]^{-1}, & \b\in(0,1);
\end{cases}   \no   \\
& \hspace*{5cm} \a\in(-1,1),\, \eta\in[0,\pi),\, z\in\rho(T_{\eta,I_2,\a,\b}).  \lb{8.5}
\end{align}
The poles of $M_{\eta,I_2,\a,\b}(\dott,-1)$ occur precisely at the eigenvalues 
of $T_{\eta,I_2,\a,\b}$. 
\end{theorem}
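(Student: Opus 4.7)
The plan is to read \eqref{8.5} off of the general $\eta$-periodic formula \eqref{8.4} by substituting the explicit $\varphi_{0,\a,\b}(z,\dott)$ and $\vartheta_{0,\a,\b}(z,\dott)$ listed in \eqref{8.3} and invoking $\bbR$-linearity of the maps $g\mapsto\wti g(1)$ and $g\mapsto\wti g^{\,\prime}(1)$ on $\dom(T_{max,\a,\b})$ to pull out the constant prefactors $\pm\b^{-1}2^{-\a-1}$ and $-2^{-\a-1}$. First I would verify that the choices in \eqref{8.3} really are the pair $(\varphi_{0,\a,\b},\vartheta_{0,\a,\b})$ satisfying \eqref{5.1} with $\g=0$ at $x=-1$, namely
\begin{align*}
\wti\varphi_{0,\a,\b}(z,-1)=0, \quad \wti\varphi^{\,\prime}_{0,\a,\b}(z,-1)=1, \quad \wti\vartheta_{0,\a,\b}(z,-1)=1, \quad \wti\vartheta^{\,\prime}_{0,\a,\b}(z,-1)=0,
\end{align*}
together with the Wronskian condition \eqref{3.1}. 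In view of the boundary-value formulas \eqref{4.11}, this reduces to the known leading-order asymptotics of $y_{1,\a,\b,-1}$ and $y_{2,\a,\b,-1}$ at $x=-1$ recorded in Appendices \ref{sA} and \ref{sC} (the $y_1$ solution is bounded with $\wti y_{1,\a,\b,-1}(z,-1)=1$, $\wti y^{\,\prime}_{1,\a,\b,-1}(z,-1)=0$, while $y_2$ carries the singular behavior and contributes to $\wti y^{\,\prime}_{2,\a,\b,-1}(z,-1)$); the $\b$-dependent prefactors in \eqref{8.3} are precisely those needed to make this normalization work uniformly across the three $\b$-regimes.

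With this verification in place, plugging into \eqref{8.4} is purely mechanical: in each of the three branches, $\wti\varphi_{0,\a,\b}(z,1)$, $\wti\vartheta_{0,\a,\b}(z,1)$, and their quasi-derivatives become scalar multiples of $\wti y_{j,\a,\b,-1}(z,1)$ and $\wti y^{\,\prime}_{j,\a,\b,-1}(z,1)$, $j=1,2$, with the very constants displayed in \eqref{8.3}, so that the three matrices in \eqref{8.5}, as well as their denominators $\wti\varphi^{\,\prime}_{0,\a,\b}(z,1)+\wti\vartheta_{0,\a,\b}(z,1)-2\cos(\eta)$, are obtained directly. For the pole/eigenvalue claim, one notes that $\wti F_{\eta,I_2,\a,\b}(z)$ (cf.\ \eqref{8.3}) is, up to a nonvanishing factor, the determinant detecting those $z$ for which a solution of $\tau_{\a,\b}u=zu$ satisfies the $\eta$-periodic boundary condition; its zeros are therefore exactly the eigenvalues of $T_{\eta,I_2,\a,\b}$, and combined with the Nevanlinna--Herglotz character of $M_{\eta,I_2,\a,\b}(\dott,-1)$ recorded after \eqref{8.3} this identifies the poles of $M_{\eta,I_2,\a,\b}(\dott,-1)$ with $\sigma(T_{\eta,I_2,\a,\b})$.

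The only substantive obstacle is bookkeeping: one must correctly match the $x=-1$ normalizations of $y_{1,\a,\b,-1}$ and $y_{2,\a,\b,-1}$ across the three $\b$-regimes in \eqref{4.9} so that the pairs $(\varphi_{0,\a,\b},\vartheta_{0,\a,\b})$ of \eqref{8.3} genuinely realize the $\g=0$ separated boundary condition at $-1$ with unit Wronskian. In the borderline case $\b=0$, where the nonprincipal solution at $-1$ acquires a logarithmic behavior, the roles of $\varphi_{0,\a,0}$ and $\vartheta_{0,\a,0}$ essentially interchange relative to the $\b\neq 0$ cases, which accounts for the different placement of the $-2^{-\a-1}$ factor and the sign of $2\cos(\eta)$ in that branch of \eqref{8.5}; a sign error here would propagate into all entries of $M_{\eta,I_2,\a,\b}(z,-1)$ and destroy the symmetry of the off-diagonal entries.
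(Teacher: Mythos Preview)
Your approach is correct and mirrors the paper's: the theorem is obtained by direct substitution of the case-dependent definitions of $\varphi_{0,\a,\b}$ and $\vartheta_{0,\a,\b}$ from \eqref{8.3} (whose normalization at $x=-1$ is fixed by \eqref{C.13}) into the general $\eta$-periodic formula \eqref{8.4}. One small correction to your parenthetical: the identities $\wti y_{1,\a,\b,-1}(z,-1)=1$, $\wti y^{\,\prime}_{1,\a,\b,-1}(z,-1)=0$ hold only for $\b\in(-1,0)$; for $\b\in[0,1)$ these values swap (cf.\ \eqref{C.13}), so the role interchange you describe is between $\b\in(-1,0)$ and $\b\in[0,1)$, not between $\b=0$ and $\b\neq 0$.
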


Hence, utilizing the boundary values given by \eqref{C.14}--\eqref{C.16} in \eqref{8.5} yields the $2\times2$ matrix-valued $M$-function $M_{\eta,I_2,\a,\b}(\dott,-1)$.

The following example provides an explicit illustration of Theorem \ref{t8.1} in the singular case $\a,\b\in(0,1)$:

\begin{example}[$\eta$-periodic extension]
Let $\a,\b\in(0,1)$. Substituting the boundary values \eqref{C.14} and \eqref{C.15} into \eqref{8.5} yields the $\eta$-periodic $M$-function
\begin{align}\no
& M_{\eta,I_2,\a,\b}(z,-1)\\
&\quad=
\Small\begin{pmatrix}
\dfrac{- 2^{1+\a+\b}\Gamma(1+\a)\Gamma(1+\b)}{\Gamma(a_{\a,\b,\sigma_{\a,\b}(z)})\Gamma(a_{\a,\b,-\sigma_{\a,\b}(z)})}  &
\begin{matrix}
2^{-1}\Bigg[ \dfrac{-\Gamma(1+\a)\Gamma(-\b)}{\Gamma(a_{\a,-\b,\sigma_{\a,\b}(z)})\Gamma(a_{\a,-\b,-\sigma_{\a,\b}(z)})} \\
\hfill+\dfrac{\Gamma(1+\b)\Gamma(-\a)}{\Gamma(a_{-\a,\b,\sigma_{\a,\b}(z)})\Gamma(a_{-\a,\b,-\sigma_{\a,\b}(z)})}\Bigg]
\end{matrix}\\
\begin{matrix}
2^{-1}\Bigg[ \dfrac{-\Gamma(1+\a)\Gamma(-\b)}{\Gamma(a_{\a,-\b,\sigma_{\a,\b}(z)})\Gamma(a_{\a,-\b,-\sigma_{\a,\b}(z)})} \\
\hfill+\dfrac{\Gamma(1+\b)\Gamma(-\a)}{\Gamma(a_{-\a,\b,\sigma_{\a,\b}(z)})\Gamma(a_{-\a,\b,-\sigma_{\a,\b}(z)})}\Bigg]
\end{matrix}
  & 
\dfrac{2^{-1-\a-\b}\Gamma(-\b)\Gamma(-\a)}{\Gamma(a_{-\a,-\b,\sigma_{\a,\b}(z)})\Gamma(a_{-\a,-\b,-\sigma_{\a,\b}(z)})}
\end{pmatrix}  \no \\
&\hspace{1cm} \times \Bigg[-2\cos(\eta)+\dfrac{\Gamma(1+\b)\Gamma(-\a)}{\Gamma(a_{-\a,\b,\sigma_{\a,\b}(z)})\Gamma(a_{-\a,\b,-\sigma_{\a,\b}(z)})}    \\
&\hspace{3.4cm}+ \dfrac{\Gamma(1+\a)\Gamma(-\b)}{\Gamma(a_{\a,-\b,\sigma_{\a,\b}(z)})\Gamma(a_{\a,-\b,-\sigma_{\a,\b}(z)})}\Bigg]^{-1},  \no\\
& \hspace*{3.45cm} \a,\b\in(0,1),\ \eta\in[0,\pi),\, z\in\rho(T_{\eta,I_2,\a,\b}), \no
\end{align}
where we abbreviated 
\begin{align}
a_{\mu,\nu,\pm\sigma} = [1 + \mu+\nu \pm \sigma]/2,\quad \mu, \nu, \sigma \in \bbC.    \lb{5.24}
\end{align}
\end{example}

As a final concrete example of coupled boundary conditions, we now consider the Krein--von Neumann extension following Example 4.3 in \cite{FGKLNS20}: 

\begin{example}[Krein--von Neumann extension]
For $\a,\b\in(-1,1)$, the following five cases are associated with a strictly positive minimal operator 
$T_{min,\a,\b}$ and we now provide the corresponding $R_{K,\a,\b}$ that yield the Krein--von Neumann extension $T_{0,R_K,\a,\b}$ of $T_{min,\a,\b}$:
\begin{align}
& T_{0,R_K,\a,\b} f = \tau_{\a,\b} f,    \\
& f \in \dom(T_{0,R_K,\a,\b})=\Bigg\{g\in\dom(T_{max,\a,\b}) \, \Bigg| \Bigg(\begin{matrix} \wti g(1) 
\\ {\wti g}^{\, \prime}(1) \end{matrix}\Bigg) = R_{K,\a,\b} \Bigg(\begin{matrix}
\wti g(-1) \\ {\wti g}^{\, \prime}(-1) \end{matrix}\Bigg) \Bigg\}, \no \\
& R_{K,\a,\b}=\begin{cases}
\begin{pmatrix}  1 & 2^{-\a-\b-1}\dfrac{\Gamma(-\a)\Gamma(-\b)}{\Gamma(-\a-\b)} \\
0 & 1
\end{pmatrix}, & \a,\b\in(-1,0),\\[7mm]
\begin{pmatrix}  -2^{-\a-\b-1}\dfrac{\Gamma(-\a)\Gamma(-\b)}{\Gamma(-\a-\b)} & 1 \\
-1 & 0
\end{pmatrix}, & \a\in(-1,0),\; \b\in(0,1), \\[7mm]
\begin{pmatrix}  0 & -1 \\
1 & 2^{-\a-\b-1}\dfrac{\Gamma(-\a)\Gamma(-\b)}{\Gamma(-\a-\b)}
\end{pmatrix}, & \a\in(0,1),\; \b\in(-1,0), \\[7mm]
\begin{pmatrix}  0 & -1 \\
1 & -2^{-\b-1}[\gamma_{E}+\psi(-\b)]
\end{pmatrix}, & \a=0,\; \b\in(-1,0) \\[7mm]
\begin{pmatrix}  2^{-\a-1}[\gamma_{E}+\psi(-\a)] & 1 \\
-1 & 0
\end{pmatrix}, & \a\in(-1,0),\; \b=0,
\end{cases}\lb{8.8}
\end{align}
where we interpret $1/\Gamma(0) = 0$.

For the remaining four cases given by all combinations of $\a=0,\ \b=0,\ \a\in(0,1),$ and $\b\in(0,1)$, we note that \cite[Theorem 3.5]{FGKLNS20} is not applicable as the minimal operator, $T_{\a,\b,min}$, is not strictly positive. This is easily seen by considering the spectrum explicitly given in Example \ref{e5.3} $(ii),\ (iii),\ (vii)$ and $(viii)$.
In particular, $0\in\sigma(T_{F,\a,\b}),\ \a,\b\in[0,1),$ and hence $T_{min, \a,\b} \geq 0$ is nonnegative, but not strictly positive when $\a,\b \in [0,1)$. We omit further details at this point. 

Finally, from \eqref{8.1} and \eqref{8.8}, one can compute the Green's function induced $M$-function for the Krein extension. We provide the explicit form for $\a,\b\in(-1,0)$ next, noting that the remaining cases can be treated analogously using the appropriate generalized boundary values at each endpoint corresponding to the ranges chosen for $\alpha$ and $\beta$. Abbreviating $a_{\mu,\nu,\pm\sigma} = [1 + \mu+\nu \pm \sigma]/2$, $\mu, \nu, \sigma \in \bbC$ as in \eqref{5.24}, one finds
\begin{align}
& M_{0,R_K,\a,\b}(z,-1)  \no\\
&\quad =
\Small \begin{pmatrix}
\begin{matrix}
-\b^{-1} 2^{-\a-1} \wti y_{2,\a,\b,-1}(z,1)\\
\quad
\end{matrix}   &
\begin{matrix}
2^{-1}\Bigg[ -\b^{-1} 2^{-\a-1} \wti y_{2,\a,\b,-1}^{\, \prime}(z,1) \hfill \\
+2^{-2\a-\b-2}\dfrac{\Gamma(-\a)\Gamma(-\b)}{\b\Gamma(-\a-\b)}  \wti y_{2,\a,\b,-1}(z,1) \\
\hfill- \wti y_{1,\a,\b,-1}(z,1)\Bigg]
\end{matrix}\\[4mm]
\begin{matrix}
2^{-1}\Bigg[-\b^{-1} 2^{-\a-1} \wti y_{2,\a,\b,-1}^{\, \prime}(z,1) \hfill \\
+2^{-2\a-\b-2}\dfrac{\Gamma(-\a)\Gamma(-\b)}{\b\Gamma(-\a-\b)} \wti y_{2,\a,\b,-1}(z,1) \\
\hfill- \wti y_{1,\a,\b,-1}(z,1)\Bigg]
\end{matrix}  & 
\begin{matrix}
2^{-\a-\b-1}\dfrac{\Gamma(-\a)\Gamma(-\b)}{\Gamma(-\a-\b)}\wti y_{1,\a,\b,-1}(z,1)\\
\hfill- \wti y_{1,\a,\b,-1}^{\, \prime}(z,1)\\
\quad
\end{matrix} 
\end{pmatrix}   \no\\
&\qquad\quad\times \Bigg[\b^{-1} 2^{-\a-1} \wti y_{2,\a,\b,-1}^{\, \prime}(z,1)- \wti y_{1,\a,\b,-1}(z,1) \no \\
&\hspace*{3cm}+2^{-\a-\b-1}\dfrac{\Gamma(-\a)\Gamma(-\b)}{\Gamma(-\a-\b)}\wti y_{1,\a,\b,-1}^{\, \prime}(z,1)+2\Bigg]^{-1} \no \\
&\quad =
\begin{pmatrix}
M_{1,1}   &
M_{1,2}\\
M_{2,1}  & 
M_{2,2}
\end{pmatrix}   \no\\
&\qquad\quad\times \Bigg[ \dfrac{\Gamma(1+\a)\Gamma(1-\b)}{\b\Gamma(a_{\a,-\b,\sigma_{\a,\b}(z)})\Gamma(a_{\a,-\b,-\sigma_{\a,\b}(z)})}- \dfrac{\Gamma(1+\b)\Gamma(-\a)}{\Gamma(a_{-\a,\b,\sigma_{\a,\b}(z)})\Gamma(a_{-\a,\b,-\sigma_{\a,\b}(z)})} \no \\
&\hspace*{4.5cm}+\dfrac{\Gamma(-\a)\Gamma(-\b)\Gamma(1+\a)\Gamma(1+\b)}{\Gamma(-\a-\b)\Gamma(a_{\a,\b,\sigma_{\a,\b}(z)})\Gamma(a_{\a,\b,-\sigma_{\a,\b}(z)})}+2\Bigg]^{-1}, \no \\
&\hspace*{6cm}\a,\b\in(-1,0),\ z\in\rho(T_{0,R_K,\a,\b}),
\end{align}
where
\begin{align}
\begin{split}
M_{1,1}&=\dfrac{2^{-\a-\b-1}\Gamma(-\b)\Gamma(-\a)}{\Gamma(a_{-\a,-\b,\sigma_{\a,\b}(z)})\Gamma(a_{-\a,-\b,-\sigma_{\a,\b}(z)})},\\[2mm]
M_{1,2}&=M_{2,1}=2^{-1}\Bigg[ \dfrac{\Gamma(1+\a)\Gamma(-\b)}{\Gamma(a_{\a,-\b,\sigma_{\a,\b}(z)})\Gamma(a_{\a,-\b,-\sigma_{\a,\b}(z)})} \\[1mm]
&\hspace*{2.3cm} -\dfrac{2^{-2\a-2\b-2}(\Gamma(-\a)\Gamma(-\b))^2}{\Gamma(-\a-\b)\Gamma(a_{-\a,-\b,\sigma_{\a,\b}(z)})\Gamma(a_{-\a,-\b,-\sigma_{\a,\b}(z)})} \\[1mm]
&\hspace*{2.3cm} - \dfrac{\Gamma(1+\b)\Gamma(-\a)}{\Gamma(a_{-\a,\b,\sigma_{\a,\b}(z)})\Gamma(a_{-\a,\b,-\sigma_{\a,\b}(z)})}\Bigg],\\[2mm]
M_{2,2}&=\dfrac{2^{-\a-\b-1}\Gamma(-\a)^2\Gamma(-\b)\Gamma(1+\b)}{\Gamma(-\a-\b)\Gamma(a_{-\a,\b,\sigma_{\a,\b}(z)})\Gamma(a_{-\a,\b,-\sigma_{\a,\b}(z)})}
\\[1mm]
&\quad\, - \dfrac{2^{1+\a+\b}\Gamma(1+\a)\Gamma(1+\b)}{\Gamma(a_{\a,\b,\sigma_{\a,\b}(z)})\Gamma(a_{\a,\b,-\sigma_{\a,\b}(z)})},  \\
& \hspace*{6.5mm} \a,\b\in(-1,0),\ z\in\rho(T_{0,R_K,\a,\b}). 
\end{split}
\end{align}
\end{example}

\section{Precisely One Interval Endpoint in the Limit Point Case} \lb{s6}

In this section we determine the Weyl--Titchmarsh--Kodaira $m$-function in all situations where precisely one interval endpoint is in the limit point case. We will focus on the case when $\a \in (-\infty, -1]$ or 
$\a \in [1,\infty)$, so that the right endpoint $x=1$ represents the limit point case. The converse situation can be obtained by reflection with respect to the origin (i.e., considering the transform $(-1,1) \ni x \mapsto -x \in (-1,1)$).  

We start with the case $\a \in [1,\infty)$, $\b\in(-1,1)$ so that the left endpoint $-1$ is regular or limit circle and the right endpoint $1$ is in the limit point case. In accordance with Section \ref{s3}, we single out the left endpoint $x=-1$ and determine the Weyl--Titchmarsh--Kodaira solution and $m$-function via
\begin{align}
\begin{split} 
\psi_{\g,\a,\b} (z,\dott) = \vartheta_{\g,\a,\b}(z,\dott) + m_{\g,\a,\b} (z) \varphi_{\g,\a,\b}(z,\dott) 
\in L^2((c,1); r_{\a,\b} dx),&  \\
z \in \rho(T_{\g,\a,\b}), \;  \a \in [1,\infty), \; \b\in(-1,1), \; c \in (-1,1).&
\end{split}
\end{align}
To simplify matters we next focus on the Friedrichs extension, $\g=0$, noting that for $\g\in[0,\pi)$, the general $m_{\g,\a,\b}$-function can be found by employing the linear fractional transformation \eqref{3.23}, choosing 
$\gamma_1 =0$ and $\gamma_2 = \gamma$, fixing the limit point endpoint $x=1$ (dropping the parameter 
$\delta \in [0,\pi)$ as there is no boundary condition to be imposed at the limit point endpoint $x=1$). We provide a summary at the end of this section in Theorem \ref{t6.1}.\\[1mm]
\noindent 
{\boldmath $\mathbf{(I)}$ {\bf The Case $\a \in [1,\infty)$ and $\b\in(-1,0)$:}} \\[1mm] 
Consider the Friedrichs extension $\gamma = 0$, $T_{F,\a,\b}= T_{0,\a,\b}$, $\a \in [1,\infty)$, $\b\in(-1,0)$. Then 
\begin{align}
\varphi_{0,\a,\b}(z,x)=-2^{-\a-1}\b^{-1}y_{2,\a,\b,-1}(z,x),\quad \vartheta_{0,\a,\b}(z,x)=y_{1,\a,\b,-1}(z,x), 
\end{align}
and the requirement $\psi_{0,\a,\b} (z,\dott)\in L^2\big((c,1); r_{\a,\b} dx\big)$, $c \in (-1,1)$, 
$z \in \bbC \backslash \bbR$, can be recast as
\begin{equation}
\big[y_{1,\a,\b,-1}(z,\dott)-2^{-\a-1}\b^{-1}y_{2,\a,\b,-1}(z,\dott) m_{0,\a,\b}(z)\big] 
\in L^2\big((c,1); r_{\a,\b} dx\big). 
\lb{6.3}
\end{equation}
Using the limiting behavior of $y_{j,\a,\b,-1}(z,x),\ j=1,2$, near $x=1$ in \eqref{C.1}, \eqref{C.4}, \eqref{C.5}, \eqref{C.8}, and observing that only the part $(1-x)^{\a}$ in $r_{\a,\b}(x)$ matters for $x$ in a neighborhood of $1$, one concludes that the square integrability in \eqref{6.3} reduces to integrability of 
\begin{align}
& (1-x)^{-\a} \bigg[\f{2^\a\Gamma(1+\b)\Gamma(\a)}{\Gamma([1+\a+\b+\sigma_{\a,\b}(z)]/2)
\Gamma([1+\a+\b-\sigma_{\a,\b}(z)]/2)}    \\
& \hspace*{1.7cm}  -\f{2^{-\b-1}\b^{-1}\Gamma(1-\b)\Gamma(\a)}{\Gamma([1+\a-\b+\sigma_{\a,\b}(z)]/2)
\Gamma([1+\a-\b-\sigma_{\a,\b}(z)]/2)} m_{0,\a,\b}(z)\bigg]^2,  \no 
\end{align}
near $x=1$ which, in turn, happens if and only if the expression in brackets vanishes. Thus, 
\begin{align}
& m_{0,\a\,\b}(z) = 2^{1+\a+\b}\b \f{\Gamma(1+\b)}{\Gamma(1-\b)}    \no \\
& \hspace*{1.8cm}  \times \f{\Gamma([1+\a-\b+\sigma_{\a,\b}(z)]/2)
\Gamma([1+\a-\b-\sigma_{\a,\b}(z)]/2)}{
\Gamma([1+\a+\b+\sigma_{\a,\b}(z)]/2)\Gamma([1+\a+\b-\sigma_{\a,\b}(z)]/2)},    \lb{6.5} \\
&\hspace*{4.7cm}  z\in\rho(T_{F,\a,\b}), \; \a \in [1,\infty), \; \b\in(-1,0),   \no \\
&\sigma(T_{F,\a,\b}) = \{(n-\b)(n+1+\a)\}_{n\in\bbN_0}, \quad \a\in [1,\infty), \; \b\in(-1,0). \no
\end{align}
\noindent 
{\boldmath $\mathbf{(II)}$ {\bf The Case $\a \in [1,\infty)$ and $\b=0$\,:}} \\[1mm] 
Consider the Friedrichs extension $\g = 0$, $T_{F,\a,0} = T_{0,\a,0}$, $\a \in [1,\infty)$, $\b=0$. Then 
\begin{align}
\varphi_{0,\a,0}(z,x)=y_{1,\a,0,-1}(z,x),\quad \vartheta_{0,\a,0}(z,x)=-2^{-\a-1}y_{2,\a,0,-1}(z,x), 
\end{align}
and the requirement $\psi_{0,\a,0} (z,\dott)\in L^2\big((c,1); r_{\a,\b} dx\big)$, $c \in (-1,1)$, 
$z \in \bbC \backslash \bbR$, can again be recast as
\begin{equation}
\big[-2^{-\a-1}y_{2,\a,0,-1}(z,\dott)+y_{1,\a,0,-1}(z,\dott) m_{0,\a,0}(z)\big] 
\in L^2\big((c,1); r_{\a,\b} dx\big).     \lb{6.7}
\end{equation}
Using the limiting behavior $y_{j,\a,\b,-1}(z,x),\ j=1,2$, near $x=1$ in \eqref{C.1}, \eqref{C.4}, \eqref{C.9}, and \eqref{C.12}, one concludes that the square integrability in \eqref{6.7} reduces to integrability of  
\begin{align} 
\begin{split} 
& (1-x)^{-\a}\bigg[\f{2\gamma_E+\psi([1+\a+\sigma_{\a,0}(z)]/2)+\psi([1+\a-\sigma_{\a,0}(z)]/2)}
{2 \Gamma([1+\a+\sigma_{\a,0}(z)]/2)\Gamma([1+\a-\sigma_{\a,0}(z)]/2)}\Gamma(\a)    \\
& \hspace*{1.6cm} + \f{2^\a\Gamma(\a)}{\Gamma([1+\a+\sigma_{\a,0}(z)]/2)
\Gamma([1+\a-\sigma_{\a,0}(z)]/2)}m_{0,\a,0}(z)\bigg]^2,
\end{split} 
\end{align}
near $x=1$, which happens if and only if the expression in brackets vanishes. Thus, 
\begin{align}
& m_{0,\a,0}(z)=-2^{-\a-1} \{2\gamma_E+\psi([1+\a+\sigma_{\a,0}(z)]/2) 
+\psi([1+\a-\sigma_{\a,0}(z)]/2)\},  \no \\
& \hspace*{6cm}  z\in\rho(T_{F,\a,0}), \; \a \in [1,\infty), \; \b=0,      \lb{6.9}\\
&\sigma(T_{F,\a,0}) = \{n(n+1+\a)\}_{n\in\bbN_0},  \quad \a \in [1,\infty), \; \b = 0. \no
\end{align}

\noindent 
{\boldmath $\mathbf{(III)}$ {\bf The Case $\a \in [1,\infty)$ and $\b\in(0,1)$:}} \\[1mm] 
Finally, consider the Friedrichs extension $\g = 0$, $T_{F,\a,\b} = T_{0,\a,\b}$, $\a \in [1,\infty)$, $\b\in(0,1)$. Then 
\begin{align}
\varphi_{0,\a,\b}(z,x)=y_{1,\a,\b,-1}(z,x),\quad \vartheta_{0,\a,\b}(z,x)=2^{-\a-1}\b^{-1}y_{2,\a,\b,-1}(z,x), 
\end{align}
and the requirement $\psi_{0,\a,\b} (z,\dott)\in L^2\big((c,1); r_{\a,\b} dx\big)$, $c \in (-1,1)$, 
$z \in \bbC \backslash \bbR$, can again be recast as
\begin{align}
\big[2^{-\a-1}\b^{-1}y_{2,\a,\b,-1}(z,\dott)+y_{1,\a,\b,-1}(z,\dott) m_{0,\a,\b}(z)\big] \in L^2\big((c,1); r_{\a,\b} dx\big). 
\lb{6.11} 
\end{align}
Once again, using the limiting behavior of $y_{j,\a,\b,-1}(z,x),\ j=1,2$, near $x=1$ in \eqref{C.1}, \eqref{C.4}, \eqref{C.5}, \eqref{C.8}, one concludes that the square integrability in \eqref{6.11} reduces to integrability of  
\begin{align}
& (1-x)^{-\a} \bigg[\f{2^{-\b-1}\b^{-1}\Gamma(1-\b)\Gamma(\a)}{\Gamma([1+\a-\b+\sigma_{\a,\b}(z)]/2)
\Gamma([1+\a-\b-\sigma_{\a,\b}(z)]/2)}    \\
& \hspace*{1.6cm} +\f{2^\a\Gamma(1+\b)\Gamma(\a)}{\Gamma([1+\a+\b+\sigma_{\a,\b}(z)]/2)
\Gamma([1+\a+\b-\sigma_{\a,\b}(z)]/2)} m_{0,\a,\b}(z)\bigg]^2,    \no
\end{align}
near $x=1$, which happens if and only if the expression in brackets is zero. Thus, 
\begin{align}
\no m_{0,\a\,\b}(z)&= \b^{-1} 2^{-1 - \a - \b}\f{-\Gamma(1-\b)}{\Gamma(1+\b)}    \no \\
& \quad \times \f{\Gamma([1+\a+\b+\sigma_{\a,\b}(z)]/2)
\Gamma([1+\a+\b-\sigma_{\a,\b}(z)]/2)}{\Gamma([1+\a-\b+\sigma_{\a,\b}(z)]/2)
\Gamma((1+\a-\b-\sigma_{\a,\b}(z))/2)},     \lb{6.13} \\
&\hspace{3.5cm} z\in\rho(T_{F,\a,\b}), \; \a \in [1,\infty), \; \b\in(0,1),     \no \\
\sigma(T_{F,\a,\b}) &= \{n(n+1+\a+\b)\}_{n\in\bbN_0},   \quad \a \in [1,\infty), \; \b\in(0,1). \no
\end{align}

\medskip

Next, we turn to the remaining case $\a \in (-\infty, -1]$, $\b\in(-1,1)$, and determine the Weyl--Titchmarsh--Kodaira solution and $m$-function in accordance with Section \ref{s3} via
\begin{align}
\begin{split}
\psi_{\g,\a,\b} (z,\dott) = \vartheta_{\g,\a,\b}(z,\dott) + m_{\g,\a,\b} (z) \varphi_{\g,\a,\b}(z,\dott)
\in L^2((c,1); r_{\a,\b} dx),&  \\
z \in \rho(T_{\g,\a,\b}), \; \a \in (-\infty. -1], \; \b\in(-1,1), \; c \in (-1,1).&
\end{split}
\end{align}
To simplify matters we once more focus on the Friedrichs extension, $\g=0$ since for 
$\g\in[0,\pi)$ the general $m_{\g,\a,\b}$-function can again be determined by employing the linear fractional transformation \eqref{3.23}. \\[1mm] 
\noindent 
{\boldmath $\mathbf{(IV)}$ {\bf The Case $\a \in (-\infty,-1]$ and $\b\in(-1,0)$:}} \\[1mm] 
Consider the Friedrichs extension $\g=0$, $T_{F,\a,\b} = T_{0,\a,\b}$, $\a \in (-\infty,-1]$, $\b\in(-1,0)$. Then  
\begin{align}
\varphi_{0,\a,\b}(z,x)=-2^{-\a-1}\b^{-1}y_{2,\a,\b,-1}(z,x),\quad \vartheta_{0,\a,\b}(z,x)=y_{1,\a,\b,-1}(z,x),
\end{align}
and the requirement $\psi_{0,\a,\b} (z,\dott)\in L^2\big((c,1); r_{\a,\b} dx\big)$, $c \in (-1,1)$, 
$z \in \bbC \backslash \bbR$, can be recast as
\begin{align}
\big[y_{1,\a,\b,-1}(z,\dott)-2^{-\a-1}\b^{-1}y_{2,\a,\b,-1}(z,\dott) m_{0,\a,\b}(z)\big] 
\in L^2\big((c,1); r_{\a,\b} dx\big).    \lb{6.16}
\end{align}
Using again the limiting behavior of $y_{j,\a,\b,-1}(z,x),\ j=1,2$, near $x=1$ in \eqref{C.1}, \eqref{C.2}, 
\eqref{C.5}, \eqref{C.6}, 
one concludes that the square integrability in \eqref{6.16} reduces to the integrability of 
\begin{align}
& (1-x)^{\a} \bigg[\f{\Gamma(1+\b)\Gamma(-\a)}{\Gamma([1+\b-\a+\sigma_{\a,\b}(z)]/2)
\Gamma([1+\b-\a-\sigma_{\a,\b}(z)]/2)}    \\
& \hspace*{1.4cm}
-\f{2^{-\a-\b-1}\b^{-1}\Gamma(1-\b)\Gamma(-\a)}{\Gamma([1-\a-\b+\sigma_{\a,\b}(z)]/2)
\Gamma([1-\a-\b-\sigma_{\a,\b}(z)]/2)} m_{0,\a,\b}(z)\bigg]^2,     \no 
\end{align}
near $x=1$, which happens if and only if the expression in brackets is zero. Thus, 
\begin{align}
\no m_{0,\a\,\b}(z)&=2^{1+\a+\b}\b \f{\Gamma(1+\b)}{\Gamma(1-\b)}    \no \\
& \quad \times \f{\Gamma([1-\a-\b+\sigma_{\a,\b}(z)]/2)\Gamma([1-\a-\b-\sigma_{\a,\b}(z)]/2)}
{\Gamma([1+\b-\a+\sigma_{\a,\b}(z)]/2)\Gamma([1+\b-\a-\sigma_{\a,\b}(z)]/2)},    \lb{6.18} \\
&\hspace{2.55cm} \ z\in\rho(T_{F,\a,\b}), \; \a \in (-\infty, -1], \; \b\in(-1,0), \no \\
\sigma(T_{F,\a,\b}) &= \{(n-\a-\b)(n+1)\}_{n\in\bbN_0},  \quad \a \in (-\infty, -1], \; \b\in(-1,0). \no
\end{align}
\noindent 
{\boldmath $\mathbf{(V)}$ {\bf The Case $\a \in (-\infty, -1]$ and $\b=0$\,:}} \\[1mm] 
Consider the Friedrichs extension $\g = 0$, $T_{F,\a,0} = T_{0,\a,0}$, $\a \in (-\infty, -1]$, $\b=0$. Then 
\begin{align}
\varphi_{0,\a,0}(z,x)=y_{1,\a,0,-1}(z,x),\quad \vartheta_{0,\a,0}(z,x)=-2^{-\a-1}y_{2,\a,0,-1}(z,x), 
\end{align}
and the requirement $\psi_{0,\a,0} (z,\dott)\in L^2\big((c,1); r_{\a,\b} dx\big)$, $c \in (-1,1)$, 
$z \in \bbC \backslash \bbR$, can be recast as
\begin{align}
\big[-2^{-\a-1}y_{2,\a,0,-1}(z,\dott)+y_{1,\a,0,-1}(z,\dott)m_{0,\a,0}(z)\big] 
\in L^2\big((c,1); r_{\a,\b} dx\big).    \lb{6.20}
\end{align}
Using again the limiting behavior of $y_{j,\a,\b,-1}(z,x),\ j=1,2$, near $x=1$ in \eqref{C.1}, \eqref{C.2}, \eqref{C.9}, and \eqref{C.10}, one concludes that the square integrability in \eqref{6.20} reduces to 
integrability of 
\begin{align}
\begin{split}
& (1-x)^{\a}\bigg[\f{(2\gamma_E+\psi([1-\a+\sigma_{\a,0}(z)]/2)+\psi([1-\a-\sigma_{\a,0}(z)]/2))
\Gamma(-\a)}{2^{\a+1}\Gamma([1-\a+\sigma_{\a,0}(z)]/2)\Gamma([1-\a-\sigma_{\a,0}(z)]/2)}   \\
& \hspace*{1.4cm} +\f{\Gamma(-\a)}{\Gamma([1-\a+\sigma_{\a,0}(z)]/2)\Gamma([1-\a-\sigma_{\a,0}(z)]/2)} 
m_{0,\a,0}(z)\bigg]^2,
\end{split}
\end{align}
near $x=1$, which happens if and only if the expression in brackets is zero. Thus, 
\begin{align}
& m_{0,\a,0}(z)=-2^{-\a-1} \{2\gamma_E+\psi([1-\a+\sigma_{\a,0}(z)]/2)+\psi([1-\a-\sigma_{\a,0}(z)]/2)\}, 
\no \\
& \hspace*{5.5cm} z\in\rho(T_{F,\a,0}), \; \a \in (-\infty,-1], \; \b = 0,      \lb{6.22} \\
&\sigma(T_{F,\a,0}) = \{(n-\a)(n+1)\}_{n\in\bbN_0}, \quad \a \in (-\infty,-1], \; \b = 0.\no
\end{align}
\noindent 
{\boldmath $\mathbf{(VI)}$ {\bf The Case $\a \in (-\infty,-1]$ and $\b\in(0,1)$:}} \\[1mm] 
Finally, consider the Friedrichs extension $\g = 0$, $T_{F,\a,\b} = T_{0,\a,\b}$, $\a \in (-\infty,-1]$, $\b\in(0,1)$. Then 
\begin{align}
\varphi_{0,\a,\b}(z,x)=y_{1,\a,\b,-1}(z,x),\quad \vartheta_{0,\a,\b}(z,x)=2^{-\a-1}\b^{-1}y_{2,\a,\b,-1}(z,x),
\end{align}
and the requirement $\psi_{0,\a,\b} (z,\dott)\in L^2\big((c,1); r_{\a,\b} dx\big)$, $c \in (-1,1)$, 
$z \in \bbC \backslash \bbR$, can be recast as
\begin{align}
\big[2^{-\a-1}\b^{-1}y_{2,\a,\b,-1}(z,\dott)+y_{1,\a,\b,-1}(z,\dott)m_{0,\a,\b}(z)\big] 
\in L^2\big((c,1); r_{\a,\b} dx\big).    \lb{6.24} 
\end{align}
Once again using the limiting behavior near $x=1$ of $y_{j,\a,\b,-1}(z,x),\ j=1,2,$ found in \eqref{C.1}, 
\eqref{C.2}, \eqref{C.5}, \eqref{C.6} one concludes that the square integrability in \eqref{6.24} reduces to integrability of 
\begin{align}
\begin{split}
& (1-x)^{\a} \bigg[\f{2^{-\a-\b-1}\b^{-1}\Gamma(1-\b)\Gamma(-\a)}
{\Gamma([1-\a-\b+\sigma_{\a,\b}(z)]/2)\Gamma([1-\a-\b-\sigma_{\a,\b}(z)]/2)}\\
& \quad +\f{\Gamma(1+\b)\Gamma(-\a)}{\Gamma([1+\b-\a+\sigma_{\a,\b}(z)]/2)
\Gamma([1+\b-\a-\sigma_{\a,\b}(z)]/2)} m_{0,\a,\b}(z)\bigg]^2,
\end{split}
\end{align}
near $x=1$, which happens if and only if the expression in brackets is zero. Thus, 
\begin{align}
\no m_{0,\a\,\b}(z)&= - \b^{-1} 2^{-1 - \a - \b} \f{\Gamma(1-\b)}{\Gamma(1+\b)}     \no \\
& \quad \times \f{\Gamma([1+\b-\a+\sigma_{\a,\b}(z)]/2)
\Gamma([1+\b-\a-\sigma_{\a,\b}(z)]/2)}{\Gamma([1-\a-\b+\sigma_{\a,\b}(z)]/2)
\Gamma([1-\a-\b-\sigma_{\a,\b}(z)]/2)},       \lb{6.26} \\
&\hspace{2.95cm}  z\in\rho(T_{F,\a,\b}), \; \a \in (-\infty,-1], \; \b\in(0,1),   \no \\
\sigma(T_{F,\a,\b}) &= \{(n-\a)(n+1+\b)\}_{n\in\bbN_0},  \quad \a \in (-\infty, -1], \; \b\in(0,1). \no
\end{align}

We briefly summarize the principal result of this section as follows:

\begin{theorem}\lb{t6.1}
Let $\g \in [0,\pi)$, $\a \in (-\infty,-1] \cup [1,\infty)$ and $\b \in (-1,1)$. Then the Weyl--Titchmarsh--Kodaira 
$m$-function $m_{\g,\a,\b}$ associated with $T_{\g,\a,\b}$ is given by
\begin{align}
m_{\g,\a,\b}(z) =\frac{-\sin(\g) + \cos(\gamma) m_{0,\a,\b}(z)}
{\cos(\gamma) +\sin(\gamma) m_{0,\a,\b}(z)},  \quad 
z \in \rho(T_{\g,\a,\b}), 
\end{align}
where $m_{0,\a,\b}$ is given by \eqref{6.5}, \eqref{6.9}, \eqref{6.13}, \eqref{6.18}, \eqref{6.22}, and \eqref{6.26}, respectively. The $($necessarily simple\,$)$ poles of $m_{\g,\a,\b}$ occur precisely at the $($necessarily simple\,$)$ eigenvalues of $T_{\g,\a,\b}$. 
\end{theorem}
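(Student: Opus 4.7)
The plan is to observe that Theorem \ref{t6.1} is essentially a consolidation of the six case analyses \textbf{(I)}--\textbf{(VI)} carried out earlier in the section together with one application of the standard linear fractional transformation \eqref{3.23}. First I would note that by \eqref{4.6}, the hypothesis $\a \in (-\infty,-1] \cup [1,\infty)$ places $\tau_{\a,\b}$ in the limit point case at $x=1$, so no boundary condition at $x=1$ is imposed and (by Remark \ref{r2.7}\,$(i)$) the parameter $\delta$ is dropped throughout. The complementary hypothesis $\b \in (-1,1)$ places $\tau_{\a,\b}$ in the regular or limit circle case at $x=-1$, so a separated boundary condition indexed by $\g \in [0,\pi)$ must be imposed at $x=-1$, producing the self-adjoint realization $T_{\g,\a,\b}$.

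Next, for the Friedrichs extension $\g = 0$, the explicit form of $m_{0,\a,\b}(z)$ has already been derived in the six exhaustive subcases \textbf{(I)}--\textbf{(VI)}, namely formulas \eqref{6.5}, \eqref{6.9}, \eqref{6.13}, \eqref{6.18}, \eqref{6.22}, and \eqref{6.26}. Each of these was obtained by writing
\[
\psi_{0,\a,\b}(z,\dott) = \vartheta_{0,\a,\b}(z,\dott) + m_{0,\a,\b}(z)\, \varphi_{0,\a,\b}(z,\dott)
\]
and imposing the $L^2$-integrability requirement near $x=1$, which, by means of the boundary asymptotics in Appendix \ref{sC} (specifically \eqref{C.1}, \eqref{C.2}, \eqref{C.4}--\eqref{C.6}, \eqref{C.8}--\eqref{C.10}, \eqref{C.12}), uniquely determines $m_{0,\a,\b}(z)$ by forcing the leading non-integrable coefficient to vanish.

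To extend from $\g = 0$ to arbitrary $\g \in [0,\pi)$, I would invoke the general linear fractional transformation \eqref{3.23}, which expresses $m_{\g_2,\delta}(z)$ in terms of $m_{\g_1,\delta}(z)$ when one varies the boundary condition at the left endpoint while holding the right endpoint condition fixed; in the present limit point situation at $x=1$ the $\delta$-dependence is simply suppressed, as noted below \eqref{3.24}. Choosing $\g_1 = 0$ and $\g_2 = \g$ yields exactly the claimed formula
\[
m_{\g,\a,\b}(z) = \frac{-\sin(\g) + \cos(\g)\, m_{0,\a,\b}(z)}{\cos(\g) + \sin(\g)\, m_{0,\a,\b}(z)}.
\]

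Finally, the statement about simple poles follows from two standard facts already recorded in Sections \ref{s2} and \ref{s3}: all eigenvalues of $T_{\g,\a,\b}$ are simple because one interval endpoint is in the limit point case (so the defect index is at most one), and the poles of a Weyl--Titchmarsh--Kodaira $m$-function coincide precisely with the point spectrum of the associated self-adjoint realization, with the residue being $-\|\varphi_{\g,\a,\b}(\lambda_0,\dott)\|^{-2}_{L^2((-1,1); r_{\a,\b} dx)}$ as in \eqref{3.21}. No essential obstacle arises; the only point requiring mild care is the bookkeeping that matches the correct subcase formula from \textbf{(I)}--\textbf{(VI)} to each $(\a,\b)$-region, which is clear from the case headings.
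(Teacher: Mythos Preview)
Your proposal is correct and mirrors the paper's approach exactly: the paper presents Theorem \ref{t6.1} as a summary of the six case analyses \textbf{(I)}--\textbf{(VI)} for $\g=0$, having announced at the outset of Section \ref{s6} that the general $\g\in[0,\pi)$ case follows from the linear fractional transformation \eqref{3.23} with $\g_1=0$, $\g_2=\g$, and the $\delta$-dependence suppressed due to the limit point case at $x=1$. Your account of the pole/eigenvalue correspondence is also in line with the general facts recorded in Sections \ref{s2} and \ref{s3}.
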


\section{Both Endpoints in the Limit Point Case} \lb{s7} 

In this section we treat the remaining case where $\tau_{\a,\b}$ is in the limit point case at both interval endpoints $\pm 1$, that is, where $\a, \b \in (-\infty,-1] \cup [1,\infty)$. 

We start by recalling a number of facts implied by the fact that $\tau_{\a,\b}$ is in the limit point case at $\pm 1$:
\begin{align}
\psi_{\a,\b}(z,\dott) &= C_{\a,\b}(z,x_0) \psi_{0,+,\a,\b} (z,\dott,x_0)     \lb{7.1} \\
&= \vartheta_{\a,\b}(z,\dott) + m_{\a,\b}(z) \varphi_{\a,\b}(z,\dott) \in 
L^2((c,1); r_{\a,\b} dx), \quad c \in (-1,1),    \no 
\end{align}
where 
\begin{equation}
\varphi_{\a,\b}(z,\dott) = C_{\a,\b}(z) \psi_{0,-,\a,\b}(z,\dott) \in L^2((-1,d); r_{\a,\b} dx), \quad d \in (-1,1).   
\end{equation} 
Thus, \eqref{C.1}--\eqref{C.12}, \eqref{C.17}--\eqref{C.20} imply 
\begin{align}
& \varphi_{\a,\b}(z,x) = c_{\a,\b}(z) \begin{cases} y_{1,\a,\b,-1}(z,x), & \b \in [1,\infty), \\
y_{2,\a,\b,-1}(z,x), & \b \in (-\infty, -1], 
\end{cases}  \quad x \in (-1,1), \\
& \hspace*{1.3cm} \underset{x \downarrow -1}{=} c_{\a,\b}(z) \begin{cases} 1 + \Oh(1+x), & \b \in [1,\infty), \\
(1+x)^{-\b} [1 + \Oh(1+x)], & \b \in (-\infty, -1], 
\end{cases} \\
& \psi_{0,+,\a,\b} (z,\dott,x_0) = c_{\a,\b}(z,x_0) \begin{cases} y_{1,\a,\b,1}(z,x), & \a \in [1,\infty), \\
y_{2,\a,\b,1}(z,x), & \a \in (-\infty, -1], 
\end{cases}
\end{align}
with $C_{\a,\b}(\dott,x_0), C_{\a,\b}(\dott) , c_{\a,\b}(\dott), c_{\a,\b}(\dott,x_0) \in \bbC\backslash\{0\}$, appropriate coefficients. 

Since $\vartheta_{\a,\b}(z,\dott)$ must satisfy $W(\vartheta_{\a,\b}(z,\dott),\varphi_{\a,\b}(z,\dott)) = 1$, we conveniently choose $\vartheta_{\a,\b}(z,\dott)$ (cf.\ Remark \ref{r3.2}) such that 
\begin{equation}
\vartheta_{\a,\b}(z,x) = d_{\a,\b}(z) \begin{cases} (1+x)^{-\b} [1+\Oh(1+x)], & \b \in [1,\infty), \\
1+\Oh(1+x), & \b \in (-\infty, -1], 
\end{cases}  
\end{equation}
with $d_{\a,\b}(\dott) \in \bbC\backslash\{0\}$ an appropriate coefficient. 

Our strategy to find (a multiple of) $m_{\a,\b}(\dott)$ is now rather simple: We will search for ``the part of 
$\varphi_{\a,\b}(z,\dott)$ in $\psi_{0,+,\a,\b} (z,\dott,x_0)$,'' as the former is multiplied by $m_{\a,\b}(\dott)$ according 
to \eqref{7.1}. In this context we emphasize once more that since $\tau_{\a,\b}$ is in the limit point case at $\pm 1$, $m_{\a,\b}(\dott)$ is nonunique and can only be characterized up to inessential $z$-dependent multiples as well as an additive entire term as discussed in Section \ref{s3} (see again Remark \ref{r3.2}). Since 
$\a, \b \in (-\infty,-1] \cup [1,\infty)$, this requires to distinguish the following eight cases. We once again provide a summary at the end of this section in Theorem \ref{t7.1} and once more employ the abbreviation \eqref{5.24}, 
$a_{\mu,\nu,\pm\sigma} = [1 + \mu+\nu \pm \sigma]/2$, $\mu, \nu, \sigma \in \bbC$.  \\[1mm] 
\noindent 
{\boldmath $\mathbf{(I)}$ {\bf The case $\a\in (-\infty, -1]$, $\b \in (-\infty, -1] \backslash (-\N)$:}} 
\\[1mm] 
Then an application of \eqref{C.17}--\eqref{C.20} yields 
\begin{align}
& \psi_{0,+,\a,\b} (z,\dott,x_0) \underset{x \downarrow -1}{\sim} y_{2,\a,\b,1}(z,\dott)   \no \\
& \quad \underset{x \downarrow -1}{\sim} \f{2^{-\a} \Gamma(1-\a) \Gamma(-\b)}
{\Gamma(a_{-\a,-\b,\sigma_{\a,\b}(z)}) \Gamma(a_{-\a,-\b,-\sigma_{\a,\b}(z)})}\bigg\{y_{1,\a,\b,-1}(z,\dott)     \\
& \hspace*{1.2cm} + \f{2^{\b} [\Gamma(\b)/\Gamma(-\b)] \Gamma(a_{-\a,-\b,\sigma_{\a,\b}(z)}) 
\Gamma(a_{-\a,-\b,-\sigma_{\a,\b}(z)})}{\Gamma(a_{-\a,\b,\sigma_{\a,\b}(z)})
\Gamma(a_{-\a,\b,-\sigma_{\a,\b}(z)})} \underbrace{y_{2,\a,\b,-1}(z,\dott)}_{\sim \varphi_{\a,\b}(z,\dott)}\bigg\},    
\no \\
& \hspace*{6.6cm} \a \in (-\infty,-1], \, \b \in (-\infty,-1] \backslash (- \N),    \no
\end{align}
implying 
\begin{align}
m_{\a,\b}(z) \sim \f{\Gamma([1-\a-\b+\sigma_{\a,\b}(z)]/2) \Gamma([1-\a-\b-\sigma_{\a,\b}(z)]/2)}
{\Gamma([1-\a+\b+\sigma_{\a,\b}(z)]/2) \Gamma([1-\a+\b-\sigma_{\a,\b}(z)]/2)},&      \\
\a \in (-\infty,-1], \; \b \in (-\infty,-1] \backslash (- \N).&     \no 
\end{align}
\noindent 
{\boldmath $\mathbf{(II)}$ {\bf The case $\a \in [1,\infty)$, $\b \in (-\infty, -1] \backslash (-\N)$:}} 
\\[1mm] 
Then an application of \eqref{C.17}--\eqref{C.20} yields 
\begin{align}
& \psi_{0,+,\a,\b} (z,\dott,x_0) \underset{x \downarrow -1}{\sim} y_{1,\a,\b,1}(z,\dott)   \no \\
& \quad \underset{x \downarrow -1}{\sim} \f{\Gamma(1+\a) \Gamma(-\b)}
{\Gamma(a_{\a,-\b,\sigma_{\a,\b}(z)}) \Gamma(a_{\a,-\b,-\sigma_{\a,\b}(z)})}\bigg\{y_{1,\a,\b,-1}(z,\dott)     \\
& \hspace*{1.1cm} \, + \f{2^{\b} [\Gamma(\b)/\Gamma(-\b)] \Gamma(a_{\a,-\b,\sigma_{\a,\b}(z)}) 
\Gamma(a_{\a,-\b,-\sigma_{\a,\b}(z)})}{\Gamma(a_{\a,\b,\sigma_{\a,\b}(z)})
\Gamma(a_{\a,\b,-\sigma_{\a,\b}(z)})} \underbrace{y_{2,\a,\b,-1}(z,\dott)}_{\sim \varphi_{\a,\b}(z,\dott)}\bigg\},    
\no \\
& \hspace*{6.6cm} \a \in [1,\infty), \; \b \in (-\infty,-1] \backslash (- \N),    \no
\end{align}
implying 
\begin{align}
m_{\a,\b}(z) \sim \f{\Gamma([1+\a-\b+\sigma_{\a,\b}(z)]/2) \Gamma([1+\a-\b-\sigma_{\a,\b}(z)]/2)}
{\Gamma([1+\a+\b+\sigma_{\a,\b}(z)]/2) \Gamma([1+\a+\b-\sigma_{\a,\b}(z)]/2)},&      \\
\a \in [1,\infty), \; \b \in (-\infty,-1] \backslash (- \N).&     \no 
\end{align} 
\noindent 
{\boldmath $\mathbf{(III)}$ {\bf The case $\a \in (-\infty,-1]$, $\b \in [1,\infty) \backslash \N$:}} 
\\[1mm] 
Then an application of \eqref{C.17}--\eqref{C.20} yields 
\begin{align}
& \psi_{0,+,\a,\b} (z,\dott,x_0) \underset{x \downarrow -1}{\sim} y_{2,\a,\b,1}(z,\dott)   \no \\
& \quad \underset{x \downarrow -1}{\sim} \f{2^{-\a} \Gamma(1-\a) \Gamma(\b)}
{\Gamma(a_{-\a,+\b,\sigma_{\a,\b}(z)}) \Gamma(a_{-\a,+\b,-\sigma_{\a,\b}(z)})}\bigg\{
2^{\b} y_{2,\a,\b,-1}(z,\dott)     \\
& \hspace*{1.15cm} + \f{[\Gamma(-\b)/\Gamma(\b)] \Gamma(a_{-\a,\b,\sigma_{\a,\b}(z)}) 
\Gamma(a_{-\a,\b,-\sigma_{\a,\b}(z)})}{\Gamma(a_{-\a,-\b,\sigma_{\a,\b}(z)})
\Gamma(a_{-\a,-\b,-\sigma_{\a,\b}(z)})} \underbrace{y_{1,\a,\b,-1}(z,\dott)}_{\sim \varphi_{\a,\b}(z,\dott)}\bigg\},    
\no \\
& \hspace*{6.8cm} \a \in (-\infty,-1], \; \b \in [1,\infty) \backslash \N,    \no
\end{align}
implying 
\begin{align}
m_{\a,\b}(z) \sim \f{\Gamma([1-\a+\b+\sigma_{\a,\b}(z)]/2) \Gamma([1-\a+\b-\sigma_{\a,\b}(z)]/2)}
{\Gamma([1-\a-\b+\sigma_{\a,\b}(z)]/2) \Gamma([1-\a-\b-\sigma_{\a,\b}(z)]/2)},&      \\
\a \in (-\infty,-1], \; \b \in [1,\infty) \backslash \N.&     \no 
\end{align}
\noindent 
{\boldmath $\mathbf{(IV)}$ {\bf The case $\a \in [1,\infty)$, $\b \in [1,\infty) \backslash \N$:}}
\\[1mm] 
Then an application of \eqref{C.17}--\eqref{C.20} yields 
\begin{align}
& \psi_{0,+,\a,\b} (z,\dott,x_0) \underset{x \downarrow -1}{\sim} y_{1,\a,\b,1}(z,\dott)   \no \\
& \quad \underset{x \downarrow -1}{\sim} \f{\Gamma(1+\a) \Gamma(\b)}
{\Gamma(a_{\a,\b,\sigma_{\a,\b}(z)}) \Gamma(a_{\a,\b,-\sigma_{\a,\b}(z)})}\bigg\{
2^{\b} y_{2,\a,\b,-1}(z,\dott)     \\
& \hspace*{1.2cm} + \f{[\Gamma(-\b)/\Gamma(\b)] \Gamma(a_{\a,\b,\sigma_{\a,\b}(z)}) 
\Gamma(a_{\a,\b,-\sigma_{\a,\b}(z)})}{\Gamma(a_{\a,-\b,\sigma_{\a,\b}(z)})
\Gamma(a_{\a,-\b,-\sigma_{\a,\b}(z)})} \underbrace{y_{1,\a,\b,-1}(z,\dott)}_{\sim \varphi_{\a,\b}(z,\dott)}\bigg\},    
\no \\
& \hspace*{7cm} \a \in [1,\infty), \; \b \in [1,\infty) \backslash \N,    \no
\end{align}
implying 
\begin{align}
m_{\a,\b}(z) \sim \f{\Gamma([1+\a+\b+\sigma_{\a,\b}(z)]/2) \Gamma([1+\a+\b-\sigma_{\a,\b}(z)]/2)}
{\Gamma([1+\a-\b+\sigma_{\a,\b}(z)]/2) \Gamma([1+\a-\b-\sigma_{\a,\b}(z)]/2)},&      \\
\a \in [1,\infty), \; \b \in [1,\infty) \backslash \N.&     \no 
\end{align}
\noindent 
{\boldmath $\mathbf{(V)}$ {\bf The case $\a \in [1,\infty)$, $\b = k \in \bbN$:}} \\[1mm] 
Using \eqref{B.26} we see that
\begin{align} \lb{7.15}
    &\psi_{0,+,\a,k}(z, \dott, x_0) \underset{x \downarrow -1}{\sim} y_{1,\a,k,1}(z, \dott)
    \no \\
& \quad \underset{x \downarrow -1} \sim
\f{\G(1+\a)}{\G(a_{\a,-k,\sigma_{\a,k}(z)})\G(a_{\a,-k,-\sigma_{\a,k}(z)})k!}\bigg\{y_{2,\a,k,-1}(z,\dott)
\\\no
&\hspace*{1.2cm}+\big[\psi(a_{\a,k,\sigma_{\a,k}(z)})+\psi(a_{\a,k,-\sigma_{\a,k}(z)})-\psi(k+1)+\g_E\big]\underbrace{y_{1,\a,k,-1}(z,\dott)}_{\sim \varphi_{\a,\b}(z,\dott)}\bigg\}, 
\\\no
& \hspace*{8.45cm}\a \in [1,\infty),\; \b = k \in \bbN,
\end{align}
implying
\begin{align}
    m_{\a,k}(z) &\sim \psi([1+\a+k+\sigma_{\a,k}(z)]/2])+\psi([1+\a+k-\sigma_{\a,k}(z)]/2]),
    \\\no
    & \hspace*{5.8cm}\a \in [1,\infty),\; \b = k \in \bbN,
\end{align}
where we have omitted $\psi(k+1)-\g_E$ which has no spectral significance.
\\[2mm]
\noindent 
{\boldmath $\mathbf{(VI)}$ {\bf The case $\a \in (-\infty,-1]$, $\b = k \in \bbN$:}} \\[1mm] 
After setting the parameters $\a, \, k, \, z$ in \eqref{7.15} to be $-\a,\, k, \,z+(1+k)\a$ and multiplying by $(1-x)^{-\a}$, we can use \eqref{A.21}, together with \eqref{A.29}, \eqref{A.30} to conclude 
\begin{align}
    &\psi_{0,+,\a,k}(z, \dott, x_0) \underset{x \downarrow -1}{\sim} y_{2,\a,k,1}(z, \dott)
    \no \\\lb{7.17}
& \quad \underset{x \downarrow -1} \sim
\f{\G(1-\a)}{\G(a_{-\a,-k,\sigma_{\a,k}(z)})\G(a_{-\a,-k,-\sigma_{\a,k}(z)})k!}\bigg\{y_{2,\a,k,-1}(z,\dott)
\\\no
&\hspace*{1.2cm}+\big[\psi(a_{-\a,k,\sigma_{\a,k}(z)})+\psi(a_{-\a,k,-\sigma_{\a,k}(z)})-\psi(k+1)+\g_E\big]\underbrace{y_{1,\a,k,-1}(z,\dott)}_{\sim \varphi_{\a,\b}(z,\dott)}\bigg\}, 
\\\no
& \hspace*{8.4cm}\a \in (-\infty,-1],\; \b = k \in \bbN,
\end{align}
implying
\begin{align}
    m_{\a,k}(z) &\sim \psi([1-\a+k+\sigma_{\a,k}(z)]/2])+\psi([1-\a+k-\sigma_{\a,k}(z)]/2]),
    \\\no
    & \hspace*{5.25cm}\a \in (-\infty,-1],\; \b = k \in \bbN.
\end{align}
\\[2mm]
\noindent 
{\boldmath $\mathbf{(VII)}$ {\bf The case $\a \in [1,\infty)$, $\b = -k, \; k\in \bbN$:}} \\[1mm] 
After setting the parameter $z$ in \eqref{7.15} to be $z-(1+\a)k$, multiplying by $(1+x)^k$ and using \eqref{A.26}, \eqref{A.27} we see that
\begin{align}
    &\psi_{0,+,\a,-k}(z, \dott, x_0) \underset{x \downarrow -1}{\sim} \overbrace{(1+\dott)^k y_{1,\a,k,1}(z-(1+\a)k, \dott)}^{= \, 2^k y_{1,\a,-k,1}(z,\dott)}
    \no \\
& \quad \underset{x \downarrow -1} \sim
\f{\G(1+\a)}{\G(a_{\a,-k,\sigma_{\a,-k}(z)})\G(a_{\a,-k,-\sigma_{\a,-k}(z)})k!}\bigg\{y_{1,\a,-k,-1}(z,\dott)
\\\no
&\hspace*{1.2cm}+\big[\psi(a_{\a,k,\sigma_{\a,-k}(z)})+\psi(a_{\a,k,-\sigma_{\a,-k}(z)})-\psi(k+1)+\g_E\big]\underbrace{y_{2,\a,-k,-1}(z,\dott)}_{\sim \varphi_{\a,\b}(z,\dott)}\bigg\}, 
\\\no
& \hspace*{8.4cm}\a \in [1,\infty),\; \b = -k, \, k \in \bbN,
\end{align}
implying
\begin{align}
    m_{\a,-k}(z) &\sim \psi([1+\a+k+\sigma_{\a,-k}(z)]/2])+\psi([1+\a+k-\sigma_{\a,-k}(z)]/2]),
    \\\no
    & \hspace*{5.5cm}\a \in [1,\infty),\; \b = -k, \; k \in \bbN.
\end{align}
\\[2mm]
\noindent 
{\boldmath $\mathbf{(VIII)}$ {\bf The case $\a \in (-\infty,-1]$, $\b = -k, \; k \in \bbN$:}} \\[1mm] 
In this case we set $z$ in \eqref{7.17} to be equal to $z-(1+\a)k$, multiply with $(1+x)^k$ and use \eqref{A.26}, \eqref{A.27} to see
\begin{align}
    &\psi_{0,+,\a,-k}(z, \dott, x_0) \underset{x \downarrow -1}{\sim} \overbrace{(1+\dott)^k y_{2,\a,k,1}(z-(1+\a)k, \dott)}^{= \, 2^k y_{2,\a,-k,1}(z,\dott)}
    \no \\
& \ \underset{x \downarrow -1} \sim
\f{\G(1-\a)}{\G(a_{-\a,-k,\sigma_{\a,-k}(z)})\G(a_{-\a,-k,-\sigma_{\a,-k}(z)})k!}\bigg\{y_{1,\a,-k,-1}(z,\dott)
\\\no
&\hspace*{0.8cm}+\big[\psi(a_{-\a,k,\sigma_{\a,-k}(z)})+\psi(a_{-\a,k,-\sigma_{\a,-k}(z)})-\psi(k+1)+\g_E\big]\underbrace{y_{2,\a,-k,-1}(z,\dott)}_{\sim \varphi_{\a,\b}(z,\dott)}\bigg\}, 
\\\no
& \hspace*{7.95cm}\a \in (-\infty,-1],\; \b = -k, \, k \in \bbN,
\end{align}
implying
\begin{align}
    m_{\a,-k}(z) &\sim \psi([1-\a+k+\sigma_{\a,-k}(z)]/2])+\psi([1-\a+k-\sigma_{\a,-k}(z)]/2]),
    \\\no
    & \hspace*{4.95cm}\a \in (-\infty,-1],\; \b = -k, \; k \in \bbN.
\end{align}
We summarize the results of this section as follows:
\begin{theorem} \lb{t7.1} 
Let $\a, \b \in (-\infty,-1] \cup [1,\infty)$. Then the Weyl--Titchmarsh--Kodaira 
$m$-function $m_{\a,\b}$ associated with $T_{\a,\b}$ $($the unique $L^2((-1,1); r_{\a,\b} dx)$-realization 
of $\tau_{\a,\b}$$)$ is of the following form$:$ \\[1mm]
{\boldmath $\mathbf{(I)}$ {\bf In the case $\a \in (-\infty,-1]$, 
$\b \in (-\infty,-1] \backslash (- \N)$:}}
\begin{align}
\begin{split}
m_{\a,\b}(z) &\sim \f{\Gamma([1-\a-\b+\sigma_{\a,\b}(z)]/2) \Gamma([1-\a-\b-\sigma_{\a,\b}(z)]/2)}
{\Gamma([1-\a+\b+\sigma_{\a,\b}(z)]/2) \Gamma([1-\a+\b-\sigma_{\a,\b}(z)]/2)},\\
\sigma(T_{\a,\b}) &= \{(n-\a-\b)(n+1)\}_{n\in\bbN_0}. 
\end{split}
\end{align}
{\boldmath $\mathbf{(II)}$ {\bf In the case $\a \in [1,\infty)$, 
$\b \in (-\infty, -1] \backslash (-\N)$:}} 
\begin{align}
\begin{split}
m_{\a,\b}(z) &\sim \f{\Gamma([1+\a-\b+\sigma_{\a,\b}(z)]/2) \Gamma([1+\a-\b-\sigma_{\a,\b}(z)]/2)}
{\Gamma([1+\a+\b+\sigma_{\a,\b}(z)]/2) \Gamma([1+\a+\b-\sigma_{\a,\b}(z)]/2)},\\
\sigma(T_{\a,\b}) &= \{(n-\b)(n+1+\a)\}_{n\in\bbN_0}. 
\end{split}
\end{align}
{\boldmath $\mathbf{(III)}$ {\bf In the case $\a \in (-\infty,-1]$, 
$\b \in [1,\infty) \backslash \N$:}} 
\begin{align}
\begin{split}
m_{\a,\b}(z) &\sim \f{\Gamma([1-\a+\b+\sigma_{\a,\b}(z)]/2) \Gamma([1-\a+\b-\sigma_{\a,\b}(z)]/2)}
{\Gamma([1-\a-\b+\sigma_{\a,\b}(z)]/2) \Gamma([1-\a-\b-\sigma_{\a,\b}(z)]/2)},\\
\sigma(T_{\a,\b}) &= \{(n-\a)(n+1+\b)\}_{n\in\bbN_0}. 
\end{split}
\end{align}
{\boldmath $\mathbf{(IV)}$ {\bf In the case $\a \in [1,\infty)$, 
$\b \in [1,\infty) \backslash \N$:}} 
\begin{align} 
\begin{split}
m_{\a,\b}(z) &\sim \f{\Gamma([1+\a+\b+\sigma_{\a,\b}(z)]/2) \Gamma([1+\a+\b-\sigma_{\a,\b}(z)]/2)}
{\Gamma([1+\a-\b+\sigma_{\a,\b}(z)]/2) \Gamma([1+\a-\b-\sigma_{\a,\b}(z)]/2)},\\
\sigma(T_{\a,\b}) &= \{n(n+1+\a+\b)\}_{n\in\bbN_0}. 
\end{split}
\end{align}
{\boldmath $\mathbf{(V)}$ {\bf In the case $\a \in [1,\infty)$, 
$\b = k \in \bbN$:}}
\begin{align}
\begin{split}
m_{\a,k}(z) &\sim \psi([1+\a+k+\sigma_{\a,k}(z)]/2])+\psi([1+\a+k-\sigma_{\a,k}(z)]/2]),\\
\sigma(T_{\a,k}) &= \{n(n+1+\a+k)\}_{n\in\bbN_0}.
\end{split}
\end{align}
{\boldmath $\mathbf{(VI)}$ {\bf In the case $\a \in (-\infty,-1]$, 
$\b = k \in\bbN$:}} 
\begin{align}
\begin{split}
m_{\a,k}(z) &\sim \psi([1-\a+k+\sigma_{\a,k}(z)]/2])+\psi([1-\a+k-\sigma_{\a,k}(z)]/2]),\\
\sigma(T_{\a,k}) &= \{(n-\a)(n+1+k)\}_{n\in\bbN_0}. 
\end{split}
\end{align}
{\boldmath $\mathbf{(VII)}$ {\bf In the case $\a \in [1,\infty)$,
$\b = -k, \; k \in \N$:}} 
\begin{align}
\begin{split}
m_{\a,-k}(z) &\sim \psi([1+\a+k+\sigma_{\a,-k}(z)]/2])+\psi([1+\a+k-\sigma_{\a,-k}(z)]/2]),\\
\sigma(T_{\a,-k}) &= \{(n+k)(n+1+\a)\}_{n\in\bbN_0}. 
\end{split}
\end{align}
{\boldmath $\mathbf{(VIII)}$ {\bf In the case $\a \in (-\infty,-1]$, 
$\b = -k, \; k \in \bbN$:}} 
\begin{align} 
\begin{split}
m_{\a,-k}(z) &\sim \psi([1-\a+k+\sigma_{\a,-k}(z)]/2])+\psi([1-\a+k-\sigma_{\a,-k}(z)]/2]),\\
\sigma(T_{\a,-k}) &= \{(n-\a+k)(n+1)\}_{n\in\bbN_0}. 
\end{split}
\end{align}
The $($necessarily simple\,$)$ poles of $m_{\a,\b}$ occur precisely at the $($necessarily simple\,$)$ eigenvalues of $T_{\a,\b}$.
\end{theorem}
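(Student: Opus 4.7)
\medskip

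\noindent\textbf{Proof proposal.} The plan is to treat each of the eight cases by the same three-step recipe already rehearsed in Section~\ref{s7}, then collect the resulting spectra. First, since $\a,\b\in(-\infty,-1]\cup[1,\infty)$, the classification \eqref{4.6} combined with Theorem~\ref{t2.5} guarantees that $\tau_{\a,\b}$ is in the limit point case at both endpoints, so $T_{\a,\b}=T_{\min,\a,\b}=T_{\max,\a,\b}$ is self-adjoint and unique. By Remark~\ref{r3.2}, $m_{\a,\b}$ is only determined up to a nonvanishing entire multiplier and an additive entire term (hence the symbol ``$\sim$'' in the statement), which leaves us free to pick any convenient normalization of the solutions $\varphi_{\a,\b}(z,\dott)$ and $\vartheta_{\a,\b}(z,\dott)$.

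Second, at the left endpoint the only $L^2((-1,d);r_{\a,\b}dx)$-solution (up to scalar) is proportional to $y_{1,\a,\b,-1}(z,\dott)$ when $\b\in[1,\infty)$ and to $y_{2,\a,\b,-1}(z,\dott)$ when $\b\in(-\infty,-1]$, as is visible from \eqref{C.17}--\eqref{C.20}; we choose this multiple as $\varphi_{\a,\b}(z,\dott)$ and take $\vartheta_{\a,\b}(z,\dott)$ to be a linearly independent solution with the complementary leading behavior at $-1$. Analogously, the Weyl solution at the right endpoint is (up to scalar) $y_{1,\a,\b,1}(z,\dott)$ for $\a\in[1,\infty)$ and $y_{2,\a,\b,1}(z,\dott)$ for $\a\in(-\infty,-1]$. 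Since \eqref{7.1} reads $\psi_{\a,\b}(z,\dott)=\vartheta_{\a,\b}(z,\dott)+m_{\a,\b}(z)\varphi_{\a,\b}(z,\dott)$ up to an overall (nonvanishing, entire-in-$z$) constant, the strategy is simply to expand the Weyl solution at $+1$ in the basis $\{y_{1,\a,\b,-1},y_{2,\a,\b,-1}\}$ and read off $m_{\a,\b}(z)$ as the ratio of the coefficient of the $\varphi$-solution to that of the $\vartheta$-solution. The required connection formulas are exactly \eqref{C.17}--\eqref{C.20} (and the logarithmic variant \eqref{B.26}).

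Third, with these tools, for cases $\mathbf{(I)}$--$\mathbf{(IV)}$ (i.e.\ $\b\notin\bbZ\setminus\{0\}$ in the sense that $\pm\b\notin\N$) the generic connection formula directly produces a ratio of four Gamma functions, agreeing with the stated formulas. The cases $\mathbf{(V)}$--$\mathbf{(VIII)}$ (where $\b=\pm k$, $k\in\N$) are the subtle ones: standard $_2F_1$ connection formulas degenerate, and one must invoke \eqref{B.26}, producing a digamma combination instead. To reduce cases $\mathbf{(VI)}$--$\mathbf{(VIII)}$ to $\mathbf{(V)}$, I would exploit the symmetry relations \eqref{A.21}, \eqref{A.26}, \eqref{A.27}, \eqref{A.29}, \eqref{A.30} linking $y_{j,\a,\b,\pm1}(z,\dott)$ under $\a\leftrightarrow -\a$, $\b\leftrightarrow -\b$, and the shift $z\mapsto z-(1+\a)k$ etc., exactly as done in the derivations \eqref{7.15}--\eqref{7.17}; this yields the logarithmic-type $m$-functions displayed in $\mathbf{(V)}$--$\mathbf{(VIII)}$.

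Finally, the spectra are read off from the (necessarily simple) poles of $m_{\a,\b}$, which by the general structure theory must be real and coincide with $\sigma_p(T_{\a,\b})$. In the Gamma-function cases these are the $z$ making one of the $\Gamma(\dott)$ in the numerator hit a non-positive integer, i.e.\ solving $[c+\sigma_{\a,\b}(z)]/2=-n$ or $[c-\sigma_{\a,\b}(z)]/2=-n$ for $n\in\bbN_0$; using $\sigma_{\a,\b}(z)^2=(1+\a+\b)^2+4z$ and discarding spurious solutions (those that are cancelled by poles of the denominator Gamma factors, as always happens for Jacobi-polynomial eigenvalues), one obtains the quadratic expressions in $n$ in the statement. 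The cases $\mathbf{(V)}$--$\mathbf{(VIII)}$ are handled the same way, with the poles of the digamma functions replacing the poles of Gamma. I expect the main obstacle to be a careful bookkeeping of which candidate poles actually survive (versus those cancelled by the associated entire factor absorbed into the ``$\sim$'' ambiguity) in the logarithmic cases $\mathbf{(V)}$--$\mathbf{(VIII)}$; this is where the freedom in Remark~\ref{r3.2} must be used most carefully.
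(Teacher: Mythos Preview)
Your proposal is correct and follows essentially the same approach as the paper: identify $\varphi_{\a,\b}$ with the unique $L^2$-solution near $-1$ (namely $y_{1,\a,\b,-1}$ or $y_{2,\a,\b,-1}$ according to the sign of $\b$), identify the Weyl solution near $+1$ with $y_{1,\a,\b,1}$ or $y_{2,\a,\b,1}$ according to the sign of $\a$, expand the latter via the connection formulas \eqref{C.17}--\eqref{C.20} (or \eqref{B.26} in the logarithmic cases), and read off $m_{\a,\b}$ as the coefficient of the $\varphi$-piece. Your reduction of cases $\mathbf{(VI)}$--$\mathbf{(VIII)}$ to $\mathbf{(V)}$ via the symmetry relations \eqref{A.21}, \eqref{A.26}--\eqref{A.30} and the identification of the spectra from the pole structure match the paper's derivations preceding the theorem statement exactly.
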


\section{More on the Nevanlinna--Herglotz property of $m$-functions \\ and on Jacobi Polynomials} \lb{s8} 

In this section we prove the Nevanlinna--Herglotz property for some of the $m$-functions discussed in 
Sections \ref{s5} and \ref{s6} from scratch, in particular, we will prove  the Nevanlinna--Herglotz property in connection with Jacobi polynomials. For brevity we will focus on the non-degenerate case $\b \neq 0$ only. 

More precisely, we will study the function
\begin{align} \lb{9.1}
& \hatt m_{\a,\b}(z)     \no \\
& \quad =\f{-\Gamma(1-\b)}{2^{1+\a+\b}\b\Gamma(1+\b)}
\f{\Gamma([1+\a + \b +\sigma_{\a,\b}(z)]/2)
\Gamma([1+\a+\b-\sigma_{\a,\b}(z)]/2)}{\Gamma([1+\a-\b+\sigma_{\a,\b}(z)]/2)
\Gamma([1+\a-\b-\sigma_{\a,\b}(z)]/2)},     \no \\
& \hspace*{3.6cm} \a \in \bbR, \; \b \in \bbR \backslash \bbZ, \;  z \in \bbC \backslash \{n(n+1+\a+\b)\}_{n \in \bbN_0},
\end{align}
where we again employed the abbreviation \eqref{5.5}, $\sigma_{\a,\b}(z) = \big[(1+\a+\b)^2 + 4z\big]^{1/2}$, 
$\alpha, \beta \in \bbR, \; z \in \bbC$.  

With the notation used in Sections \ref{s5} and \ref{s6}, one has the following connections between $\hatt m_{\a,\b}$ and Jacobi $m$-functions discussed in Sections \ref{s5} and \ref{s6}, 
\begin{align}
    \hatt m_{\a,\b}(z) = 
    \begin{cases}
    m_{0,0,\a,\b}(z), &\a \in (0,1), \ \b \in (0,1),
    \\
    m_{\frac{\pi}{2}, \frac{\pi}{2}, \a, \b}(z), &\a \in (-1,0), \ \b \in (-1,0),
    \\
    m_{0,\frac{\pi}{2},\a,\b}(z), &\a \in (-1,0), \ \b \in (0,1),
    \\
    m_{\frac{\pi}{2}, 0, \a, \b}(z), &\a \in (0,1), \ \b \in (-1,0),
    \\
    m_{0,\a,\b}(z), &\a  \geq 1, \ \b \in (0,1),
    \\
    m_{\frac{\pi}{2},\a,\b}(z), &\a \geq 1, \ \b \in (-1,0).
    \end{cases}      \lb{9.3} 
\end{align}

We recall that a Nevanlinna--Herglotz (in short, an N--H) function $m(\dott)$ is defined as analytic in 
$\bbC_+$ such that $\Im(m(z)) > 0$, $z\in \bbC_+$. In addition, $m$ is extended to $\bbC_-$ by reflection, that is, one
defines
\begin{equation}
m(z)=\overline{m(\overline z)},
\quad z\in\bbC_- \lb{9.4}
\end{equation}
(which, in general, does not describe the analytic continuation of $m$ to $\bbC_-$). If $m$ and $n$ are N--H functions, then so are $m+n$ and $m(n)$, in particular, as $\bbC_+ \ni z \mapsto -1/z$ is N--H, and hence, 
\begin{equation} 
m \, \text{ is N--H if and only if $-1/m$ is.}     \lb{9.5}
\end{equation}  
Any poles and isolated zeros of $m$ are simple and located on the real axis, the residues at poles being strictly 
negative. If $m$ is N--H, then $m$ has the Nevanlinna, respectively, Riesz--Herglotz representation, that is, there exists a nonnegative measure $d\omega$ on ${\mathbb{R}}$
satisfying
\begin{equation} \lb{9.6}
\int_{{\mathbb{R}}} \frac{d\omega (\la )}{1+\la^2}<\infty
\end{equation}
such that 
\begin{align}
\begin{split}
&m(z)=c+dz+\int_{{\mathbb{R}}} d\omega (\la) \bigg[\frac{1}{\la - z}-\frac{\la} {1+\la^2}\bigg], 
\quad z\in\bbC_+, \lb{9.7} \\[2mm]
& \, c=\Re[m(i)],\quad d=\lim_{\eta \uparrow
\infty}m(i\eta )/(i\eta ) \geq 0. 
\end{split}
\end{align}
Conversely, any function $m$ of the type \eqref{9.7} $($with the measure $\omega$ satisfying 
\eqref{9.6} and some $c \in \bbR$, $d \geq 0$$)$ is N--H. Moreover,
\begin{align}
\begin{split}\lb{9.8} 
&\bbC_+ \ni z \mapsto -e^{-i r \pi}z^r \, \text{ is N--H if and only if } \, r \in [0,1],   
\\
&\bbC_+ \ni z \mapsto e^{-i r \pi}z^r \, \text{ is N--H if and only if } \, r \in [-1,0].
\end{split}
\end{align}
If the measure $d \omega$ in the representation \eqref{9.7} is pure point, then $m$ is meromorphic and real-valued on the real line (with all residues strictly negative). In this case $m$ monotonically increases from $-\infty$ to $+\infty$ between any consecutive poles, hence there is a unique zero between any two poles, an interlacing property.  
Conversely, any meromorphic function real-valued on $\bbR$ with all its residues strictly negative, is a meromorphic N-H function corresponding to a pure point measure $d \omega$ in \eqref{9.7}. (We refer to \cite{GT00} and the extensive literature cited therein for various properties of N--H functions.) 

Using \cite[eq.~5.11.13]{OLBC10}, the behavior of $\hatt m_{\a,\b}$ at infinity reads 
\begin{align} \lb{9.9}
\begin{split}
\hatt m_{\a,\b}(z) \underset{z \rightarrow \infty}{=} \frac{-\G(1-\b)e^{\mp i\b \pi}}{2^{1+\a+\b}\b \G(1+\b)} z^\b 
\big[1+O\big(z^{-1}\big)\big],&  \\
z \in \bbC_{\pm}, \; |\arg z| \geq \varepsilon, \; \varepsilon > 0, \; \a \in \R, \; \b \in \R \backslash  \Z,& 
\end{split}
\end{align}
and hence for $\hatt m_{\a,\b}$ to be N--H, \eqref{9.8} necessarily limits $\beta$ to the interval $[-1,1]$ and due to the  
$\beta$ and Gamma factors $\Gamma(1\pm \beta)$ present in \eqref{9.9} one necessarily assumes from the outset that 
\begin{equation}
\beta \in (-1,1) \backslash \{0\}.     \lb{9.10}
\end{equation} 

Before continuing, we recall that upon combining \eqref{3.24} and \eqref{4.6},  
\begin{equation}
m_{\g,\d,\a,\b}(\dott), \; m_{\g,\a,\b}(\dott) \, \text{ are N--H if $\g, \d \in [0,\pi)$, $\a \in \bbR$, $\b \in (-1,1)$.} 
\lb{9.11}
\end{equation}
In the remainder of this section we prove the N--H property of $\hatt m_{\a,\b}$ from scratch for 
$\alpha \in (- \bbN) \cup(-1,\infty)$ and $\beta \in (-1,1)\backslash \{0\}$ and relate the situation $\alpha \in (- \bbN)$ to Jacobi polynomials. 

\begin{theorem} \lb{t9.1}
Let $\a \in \bbR$, $\b \in \bbR \backslash \bbZ$. Then $\hatt m_{\a,\b}$ as introduced in \eqref{9.1} has the Nevanlinna--Herglotz property if and only if $\a \in (-\bbN)\cup (-1,\infty)$ and $\b \in (-1,0)\cup(0,1)$.   
\end{theorem}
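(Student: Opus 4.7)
My plan splits into four parts: (i) necessity of the $\b$-constraint from growth asymptotics, (ii) sufficiency for $\a \in (-1,\infty)$ via the identifications \eqref{9.3}, (iii) sufficiency for $\a \in -\bbN$ via a reflection-formula collapse to the Friedrichs $m$-functions of Section \ref{s6}, and (iv) necessity of the $\a$-constraint via residue signs. For (i), the asymptotic \eqref{9.9} yields $\hatt m_{\a,\b}(z) \sim c_\pm z^\b$ in the sectors $\bbC_\pm \cap \{|\arg z| \ge \varepsilon\}$, and the characterization \eqref{9.8} then forces $\b \in [-1,1]$; combined with $\b \notin \bbZ$ this gives $\b \in (-1,0) \cup (0,1)$.

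For (ii), \eqref{9.3} identifies $\hatt m_{\a,\b}$ with a Weyl--Titchmarsh--Kodaira $m$-function whenever $\a \in (-1,0)\cup(0,1)\cup[1,\infty)$ and $\b \in (-1,0)\cup(0,1)$, and this is Nevanlinna--Herglotz by \eqref{9.11}. The edge case $\a = 0$ I would handle by a pointwise-limit argument: $\hatt m_{\a,\b}$ depends analytically on $\a$ away from its poles, and a pointwise limit of Nevanlinna--Herglotz functions on $\bbC_+$ is either Nevanlinna--Herglotz or identically a real constant, the latter being ruled out by inspection of the explicit form \eqref{9.1}.

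The crux of sufficiency lies in (iii). Applying the reflection formula $\Gamma(s)\Gamma(1-s) = \pi/\sin(\pi s)$ to each of the four Gamma factors in \eqref{9.1} with $\a = -n$ and regrouping, one obtains (up to the scalar prefactor) the Gamma ratio with $-n$ replaced by $+n$, multiplied by the sine ratio
\[
\f{\sin(\pi(1-n-\b+\sigma_{-n,\b}(z))/2)\,\sin(\pi(1-n-\b-\sigma_{-n,\b}(z))/2)}{\sin(\pi(1-n+\b+\sigma_{-n,\b}(z))/2)\,\sin(\pi(1-n+\b-\sigma_{-n,\b}(z))/2)}.
\]
Using $\sin(A+B)\sin(A-B) = [\cos(2B)-\cos(2A)]/2$ reduces this to the ratio of the two differences $\cos(\pi \sigma_{-n,\b}(z)) - \cos(\pi(1-n\pm\b))$; the identity $\cos(\pi(1-n\pm\b)) = -(-1)^n \cos(\pi\b)$ (which holds by $\sin(\pi(1-n)) = 0$ and $\cos(\pi(1-n)) = -(-1)^n$ for $n \in \bbZ$, so integrality of $n$ is essential) makes both differences equal, collapsing the sine ratio to $1$. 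The surviving Gamma ratio matches the Friedrichs $m$-function from \eqref{6.18}/\eqref{6.26} up to the explicit prefactor, giving $\hatt m_{-n,\b} = -1/m_{0,-n,\b}$ for $\b \in (-1,0)$ and $\hatt m_{-n,\b} = m_{0,-n,\b}$ for $\b \in (0,1)$; both are Nevanlinna--Herglotz by \eqref{9.11} combined with the closure property \eqref{9.5}.

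For (iv), a direct computation at the pole $z_k = k(k+1+\a+\b)$ yields
\[
\text{Res}_{z=z_k}\, \hatt m_{\a,\b}(z) = -\f{(2k+1+\a+\b)\,\Gamma(k+1+\a+\b)\,\Gamma(k+1+\b)}{2^{1+\a+\b}\,\Gamma(1+\b)^2\, k!\,\Gamma(k+1+\a)},
\]
and the Nevanlinna--Herglotz property at a real simple pole forces this to be strictly negative. Writing $\a = -n-\theta$ with $n \in \bbN$, $\theta \in (0,1)$, I would locate $k \in \{0,1,\ldots,n\}$ producing a strictly positive residue, contradicting Nevanlinna--Herglotz. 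I expect this step to be the main obstacle: the sign of the residue depends both on the parity of $n-k$ (through $\Gamma(k+1+\a) = \Gamma(k+1-n-\theta)$, whose sign is $(-1)^{n-k}$ for $0 \le k \le n-1$) and on the position of $\b$ relative to thresholds like $-\theta$ and $\theta-1$, so no single $k$ works uniformly. The proof requires a finite case split driven by these parities and thresholds, verifying in each subcase that an appropriate small $k$ yields a positive residue.
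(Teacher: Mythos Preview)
Your plan is sound and parts (i)--(iii) are correct, but your route differs substantially from the paper's. The paper proves everything ``from scratch'' via the partial-fraction expansion
\[
\hatt m_{\a,\b}(z) = \f{1}{2^{1+\a+\b}\G(1+\b)^2}\sum_{n\in\bbN_0}\f{\G(n+1+\a+\b)\G(n+1+\b)}{\G(n+1+\a)\,n!}\cdot\f{2n+1+\a+\b}{n(n+1+\a+\b)-z},
\]
obtained from the Dougall-type identity \eqref{9.12}, and then reads off residue signs directly. Your approach instead leverages the identifications \eqref{9.3} and \eqref{9.11} for sufficiency, which is perfectly valid but somewhat against the spirit of the section (the paper's stated goal is an independent verification). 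Your reflection-formula collapse in (iii), showing $\hatt m_{-n,\b}$ coincides with $m_{0,-n,\b}$ or $-1/m_{0,-n,\b}$, is a nice alternative to the paper's observation that the first $k$ terms of the series simply vanish because $1/\G(n+1-k)=0$ for $0\le n\le k-1$; both arguments exploit the integrality of $\a$ in an essential way.

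Your part (iv) is where you underestimate the simplicity of the argument. You anticipate a ``finite case split driven by parities and thresholds'' involving $\b$ versus $-\theta$ and $\theta-1$, but the paper's execution is much cleaner: for $\a\in(-k-1,-k)$ and $\b\in(-1,0)$, only two cases arise according to whether $1+\a+\b\in(-k,-k+1)$ or $1+\a+\b\in(-k-1,-k)$. In the first case the residue at $n=0$ is positive (both $\G(1+\a+\b)$ and $\G(1+\a)$ have sign $(-1)^k$, so they cancel, leaving the negative factor $1+\a+\b$), and in the second case the residue at $n=k$ is positive by the same mechanism shifted by $k$. The $\b$-position plays no role beyond determining which of the two subintervals contains $1+\a+\b$. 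For $\b\in(0,1)$ one passes to $-1/\hatt m_{\a,\b}$ and runs the identical argument with $\b\mapsto-\b$. So your anticipated obstacle dissolves: no parity bookkeeping on $n-k$ is needed, and no $\b$-threshold analysis either.
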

\begin{proof}
One recalls from \eqref{9.10} that $\beta \in (-1,1) \backslash \{0\}$ is a necessary condition for $\hatt m_{\a,\b}$ to be N--H.

Next, we recall the formula\footnote{Formula \eqref{9.12} is also recorded in \cite[p.~5]{MOS66}, however, the minus sign in the last term in the first line of the formula must be replaced by a plus sign.} in \cite[eq.~1.4.(3), p.~8]{EMOT53},
\begin{align}
& \f{\G(1+a-b) \G(1-b)}{\G(a)} \f{\G((a/2)+\zeta) \G((a/2)-\zeta)}{\G(1+(a/2)-b+\zeta) \G(1+(a/2)-b-\zeta)}    \no  \\ 
& \quad = \sum_{n\in\bbN_0} \f{(a)_n (b)_n}{(1+a-b)_n (n!)} \bigg[\f{1}{n+(a/2)+\zeta} + \f{1}{n+(a/2)-\zeta}\bigg],    \lb{9.12} \\
& \hspace*{2.5cm} a,b \in \bbC, \; \Re(b) < 1, \; \zeta \in \bbC \backslash \{\pm (\bbN_0 + (a/2))\},     \no 
\end{align}
proven in \cite[Sect.~2.4]{EMOT53} (see \eqref{A.11} for the introduction of Pochhammer's symbol $(c)_n$, 
$c \in \bbC$, $n \in \bbN_0$). Identifying 
\begin{equation}
\zeta = \sigma_{\a,\b}(z)/2, \quad a = 1+\a+\b, \quad b = 1+\b,     \lb{9.13} 
\end{equation}
in \eqref{9.12}, \eqref{9.1} for $\b \in (-1,0)$ can be rewritten in the form 
\begin{align} 
\hatt m_{\a,\b}(z) &= \f{\G(1+\a+\b)}{2^{1+\a+\b} \G(1+\b) \G(1+\a)} \sum_{n \in \bbN_0} 
\f{(1+\a+\b)_n (1+\b)_n}{(1+\a)_n (n!)}    \no \\
& \hspace*{5.05cm} \times \f{2n+1+\a+\b}{n(n+1+\a+\b) -z}    \no \\
&= \f{1}{2^{1+\a+\b} \G(1+\b)^2} \sum_{n\in\bbN_0} \f{\G(n+1+\a+\b) \G(n+1+\b)}{\G(n+1+\a) (n!)}  \no \\ 
& \hspace*{3.9cm} \times \f{2n+1+\a+\b}{n(n+1+\a+\b) - z},  \quad \b \in (-1,0),     \lb{9.14}
\end{align}
where we employed 
\begin{equation}
(\gamma)_n = \G(n+\gamma)/\G(\gamma), \quad \gamma \in \bbC, \; n \in \bbN_0,     \lb{9.15} 
\end{equation}
in the second equality in \eqref{9.14}. 

Since $\hatt m_{\a,\b}$ is meromorphic and real-valued on $\bbR$, characterizing its N--H property is now equivalent to characterizing when all residues in \eqref{9.14} are strictly negative. By inspection, this is clear from elementary Gamma function properties (see, e.g. \cite[p.~255--256]{AS72}) for $\a > -1$ and $\b \in (-1,0)$. To prove that 
$\hatt m_{\a,\b}$ is not N--H for $\a \in (-\infty,-1] \backslash (-\bbN)$ it suffices to establish some strictly positive residues in \eqref{9.14}. For this purpose suppose that 
$\a \in (-k-1,-k)$ for some $k \in \bbN$ and $\b \in (-1,0)$: Then one verifies that $\G(n+1+\b) > 0$, $n \in\bbN_0$, and choosing $n=0$ one confirms that 
\begin{equation}
\f{\G(1+\a+\b) [1+\a+\b]}{\G(1+\a)} < 0, \quad 1+\a+\b \in (-k,-k+1).     \lb{9.16}
\end{equation}
Similarly, if $n=k$, 
\begin{equation}
\f{\G(k+1+\a+\b) [2k+1+\a+\b]}{\G(k+1+\a)} < 0, \quad 1+\a+\b \in (-k-1,-k).     \lb{9.17} 
\end{equation}
Since $k \in \bbN$ is arbitrary, \eqref{9.16} and \eqref{9.17} prove the non N--H property of $\hatt m_{\a,\b}$ for 
$\a \in (-\infty,-1] \backslash (-\bbN)$ and $\b \in (-1,0)$.

To treat the case $\b \in (0,1)$ one replaces $\hatt m_{\a,\b}$ by $-1/\hatt m_{\a,\b}$ to obtain 
\begin{align}  
& -1/\hatt m_{\a,\b}(z)       \no \\ 
& \quad =\f{2^{1+\a+\b}\b\Gamma(1+\b)}{\Gamma(1-\b)}
\f{\Gamma([1+\a-\b+\sigma_{\a,\b}(z)]/2) \Gamma([1+\a-\b-\sigma_{\a,\b}(z)]/2)}
{\Gamma([1+\a + \b +\sigma_{\a,\b}(z)]/2) \Gamma([1+\a+\b-\sigma_{\a,\b}(z)]/2)},    \no \\
& \hspace*{2.85cm} \a \in \bbR, \; \b \in \bbR \backslash \bbZ, \; 
z \in \bbC \backslash \{(n-\b)(n+1+\a)\}_{n \in \bbN_0}.    \lb{9.18}
\end{align}
Identifying 
\begin{equation}
\zeta = \sigma_{\a,\b}(z)/2, \quad a = 1+\a-\b, \quad b = 1-\b,     \lb{9.19} 
\end{equation}
in \eqref{9.12}, \eqref{9.18} for $\b\in(0,1)$ can be rewritten as 
\begin{align} 
-1/\hatt m_{\a,\b}(z) &= \f{- 2^{1+\a+\b} \b \G(1+\a-\b)}{\G(-\b) \G(1+\a)} \sum_{n \in \bbN_0} 
\f{(1+\a-\b)_n (1-\b)_n}{(1+\a)_n (n!)}    \no \\
& \hspace*{4.9cm} \times \f{2n+1+\a-\b}{(n-\b)(n+1+\a) -z}    \no \\
&= \f{2^{1+\a+\b} \b^2}{\G(1-\b)^2} \sum_{n\in\bbN_0} \f{\G(n+1+\a-\b) \G(n+1-\b)}{\G(n+1+\a) (n!)}  \no \\ 
& \hspace*{2.8cm} \times \f{2n+1+\a-\b}{(n-\b)(n+1+\a) - z},  \quad \b \in (0,1).     \lb{9.20}
\end{align}
The analysis for $\a \in \bbR \backslash (-\bbN)$, $\b \in (-1,0)$ now parallels the one for $\b \in (-1,0)$.

The remaining assertion for $\a \in (-\bbN)$ is obtained as follows. Suppose $\a = - k$ for some $k \in \bbN$. 
Then the fact 
\begin{equation}
1/\G(n+1-k) = 0, \quad 0 \leq n \leq k-1,    \lb{9.21} 
\end{equation}
yields that the first $k$ terms in the sums on the right-hand sides of \eqref{9.14} and \eqref{9.20} (i.e., the terms with 
$n=0,\dots,k-1$) are simply absent. The remaining N--H analysis is not affected by the missing of the first $k$ poles in \eqref{9.14} and \eqref{9.20}.  
\end{proof}

\begin{remark} \lb{r9.2}
An alternative direct proof of the N--H property of $\hatt m_{\a,\b}$ can be obtained by employing its Hadamard factorization as follows. Using the Weierstrass factorization formula for the $\G$-function (also known as Euler's product formula, cf. \cite[eq.~6.1.3]{AS72}),
\begin{align} \lb{9.22}
    \frac{1}{\G(z)} = ze^{\g_E} \prod_{n = 1}^\infty \Big[\Big(1+\frac{z}{n}\Big) e^{-\frac{z}{n}} \Big], \quad z \in \bbC 
\end{align}
(with $\gamma_{E} = 0.57721\dots$ Euler's constant), one verifies that 
\begin{equation} \lb{9.23}
\hatt m_{\a,\b}(z) = -\frac{\G(2+\a+\b)}{2^{1+\a+\b}\b\G(1+\b)\G(2+\a)} \frac{z-z_{-\b}}{z} \prod_{n=1}^{\infty} 
\bigg[1-\frac{z}{z_{n-\b}}\bigg] \bigg/\bigg[1-\frac{z}{z_n}\bigg],
\end{equation}
employing the notation
\begin{equation}
\bbR \ni t \mapsto z_t = t(t+1+\a+\b).
\end{equation}
In particular, the possible poles and zeros of $\hatt m_{\a,\b}$ are given by
\begin{align} 
    z_n = n(n+1+\a+\b), \quad n \in \N_0,     \lb{9.24}
\end{align}
and 
\begin{align} 
    z_{n-\b} = (n-\b)(n+1+\a), \quad n \in \N_0, 
\end{align}
respectively. (Cancellations between poles and zeros are possible, see the case $\a \in (- \bbN)$ in the proof of Theorem \ref{t9.1}.) To characterize the N--H property of $\hatt m_{\a,\b}$ one then employs 
\cite[Theorem 1 on p.~308]{Le80} which amounts to establishing the interlacing property of zeros and poles of 
$\hatt m_{\a,\b}$. \hfill $\diamond$
\end{remark}

We conclude with some additional comments regarding the Nevanlinna--Herglotz property of $\hatt m_{\a,\b}$ for 
$\a \in -\bbN$. For this purpose we take a closer look at the Jacobi polynomials for $\a = -k$, $k \in \bbN$. We start with the general definition given in \cite[eq.~4.12.2]{Sz75}, 
\begin{align}\lb{9.27}
\begin{split} 
   \Pn = \frac{(1+\a)_n}{n!} \mathstrut_2F_1(-n,n+\a+\b+1;1+\a;(1-x)/2),& \\
n\in \N_0, \; \a, \b \in \R, \; x \in (-1,1)&
\end{split} 
\end{align}
(see \eqref{A.11} for the introduction of Pochhammer's symbol $(c)_n$, $c \in \bbC$, $n \in \bbN_0$), where for $\a = -k$, $k\in\bbN$, $n \geq k$, an appropriate limit has to be taken. For this we can use a special case of the formula \cite[eq.~15.1.2]{AS72} to compute
\begin{align} \lb{9.28}
    P_n^{-k,\b}(x) &= \lim_{\a \rightarrow -k} \Pn = \frac{(-1)^k(n+1-k+\b)_k}{(n-k)! k!(n+1-k)_k} 
    \bigg(\frac{1-x}{2}\bigg)^k     \no  \\
    &\quad \, \times \mathstrut_2F_1(-n+k,n+1+\b;k+1;(1-x)/2), \\
    & \hspace*{2.63cm} n \geq k,\; n,k \in \bbN, \; x \in (-1,1),   \no 
\end{align}
where for $n < k$, the prefactor in \eqref{9.27} is zero whenever $\a = -k$, while the hypergeometric function remains well-defined and hence
\begin{align}
    P_n^{-k,\b}(x) = 0, \quad 0 \leq n \leq k-1, \ n \in \N_0, \ k \in \bbN, \; x \in (-1,1).
\end{align}
The Jacobi polynomials $P_n(\dott)$ satisfy (in the sense of distributions) 
\begin{align}
    \tau_{\a,\b} \Pn = z_n \Pn, \quad n\in \N_0, \; \a, \b \in \R, \; x \in (-1,1),
\end{align}
with $\tau_{\a,\b}$ given by \eqref{4.1} and $z_n$ introduced in \eqref{9.24}. In particular, $\big\{P_n^{\a,\b}(\dott)\big\}_{n\in \N_0}$ constitutes an orthogonal basis in the Hilbert space $L^2\big((-1,1);r_{\a,\b} dx\big)$ for $\a > -1$, $\b \in (-1,1)$. 

For $\a\leq-1$, the Jacobi differential expression $\tau_{\a,\b}$ is in the limit-point case at $x = 1$ (cf.~Section \ref{s3}). Hence up to a multiplicative constant, there can be at most one distributional solution $y$ of $\tau_{\a,\b}y = zy$, that lies in $L^2\big((c,1);r_{\a,\b} dx\big)$ for some $c \in (-1,1)$, and from \eqref{9.28} one infers that $P_n^{-k,\b}(x)$ with $n \geq k$ are indeed square integrable locally near $x = 1$ with respect to the weight function $r_{-k,\b}$. As $P_n^{-k,\b}(\dott)$, $n \geq k$, satisfy the Friedrichs boundary condition at $x = -1$ for $\b \in (0,1)$, and the Neumann boundary condition for $\b \in (-1,0)$, one concludes that they are distributional eigensolutions of $\tau_{-k,\b}$ corresponding to $z_n$ saisfying the corresponding boundary condition at $x = -1$ (no boundary condition at $x = 1$ is needed due to the limit-point case at $x=1$). Moreover, from \cite[eq.~15.1.2]{AS72} it also follows that the corresponding $m$-function is again $\hatt m_{\a,\b}$ with poles exactly at $z_n = n(n+1-k+\b)$, $n \geq k$. Thus, one can summarize the case $\a \in (- \bbN)$ as follows.

\begin{theorem} \lb{t9.3} 
Suppose $\a = - k \in (- \bbN)$, $\b \in (-1,1) \backslash \{0\}$, and consider the self-adjoint operator 
\begin{equation}
T_{-k,\b,Jac} = \begin{cases} T_{0,-k,\b}, & \b \in (0,1), \\
T_{\pi/2,-k,\b}, & \b \in (-1,0), \end{cases} \quad k \in \bbN 
\end{equation}
$($cf.\ Section \ref{s6}$)$. Then  
\begin{align}
    M_{-k,\b}(z) &= \begin{cases}
    m_{0,-k,\b}(z),  & \b \in (0,1),
    \\
    m_{\frac{\pi}{2},-k,\b}(z), & \b \in (-1,0), \end{cases}   \no \\
&=\f{-\Gamma(1-\b)}{2^{\b+1-k}\b\Gamma(1+\b)}     \lb{8.31} \\
& \quad \times \f{\Gamma([1-k+\b+\sigma_{-k,\b}(z)]/2)\Gamma([1-k+\b-\sigma_{-k,\b}(z)]/2)}
{\Gamma([1-k-\b+\sigma_{-k,\b}(z)]/2)\Gamma((1-k-\b-\sigma_{-k,\b}(z)]/2)},\no\\
&\hspace{5.8cm} k \in \bbN, \ \b \in (-1,1)\backslash \{0\}.    \no 
\end{align}
Moreover, the spectrum of $T_{-k,\b,Jac}$ is given by 
\begin{equation}
    \sigma(T_{-k,\b,Jac}) = \{n(n+1-k+\b)\}_{n \in \bbN, \,n \geq k}, \quad \beta \in (-1,1) \backslash \{0\},    \lb{8.32} 
\end{equation}
with corresponding eigenfunctions the Jacobi polynomials, $\big\{P_n^{-k,\b}(\dott)\big\}_{n \in \bbN, \, n \geq k}$. In particular, $\big\{P_n^{-k,\b}(\dott)\big\}_{n \in \bbN, \, n \geq k}$ forms a complete orthogonal basis in the Hilbert space $L^2\big((-1,1);r_{-k,\b} dx\big)$. 
\end{theorem}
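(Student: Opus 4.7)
The plan is to assemble pieces already developed in the paper and recognize that most of the substantive work is done in the paragraphs preceding the theorem statement. First I would verify the explicit formula \eqref{8.31} by pure inspection: substituting $\a = -k$ into \eqref{9.1} reproduces $M_{-k,\b}(z)$ verbatim, so $M_{-k,\b}(z) = \hatt m_{-k,\b}(z)$. The correspondence \eqref{9.3} together with the limit-point computations of Section \ref{s6} then identifies $\hatt m_{-k,\b}$ with the Weyl--Titchmarsh--Kodaira $m$-function of $T_{-k,\b,Jac}$: for $\b \in (0,1)$ one starts from \eqref{6.26} (Friedrichs case, $\g = 0$); for $\b \in (-1,0)$ one starts from \eqref{6.18} and applies the linear fractional transformation \eqref{3.23} with $\g_1 = 0$, $\g_2 = \pi/2$ to pass to the Neumann extension. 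In either case the resulting expression differs from \eqref{8.31} only by a non-vanishing entire multiplicative factor and an entire additive term, which, as recalled in Remark \ref{r3.2}, is spectrally inconsequential.

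Second, I would identify the spectrum. Since $\tau_{-k,\b}$ is limit-point at $x = 1$ and $T_{-k,\b,Jac}$ carries a separated boundary condition at $x=-1$, the operator is simple and $\sigma(T_{-k,\b,Jac})$ coincides with the pole set of $M_{-k,\b}$. The numerator factor $\G([1-k+\b+\sigma_{-k,\b}(z)]/2)$ produces candidate poles $\sigma_{-k,\b}(z) = -(1-k+\b) - 2n$, $n \in \bbN_0$, which via $\sigma_{-k,\b}(z)^2 = (1-k+\b)^2 + 4z$ give $z = n(n+1-k+\b)$. For $0 \le n \le k-1$ the denominator factor $\G([1-k-\b-\sigma_{-k,\b}(z)]/2)$ simultaneously has a pole (its argument evaluates to $1-k+n \in \{1-k,\dots,0\}$), so these apparent poles cancel. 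Only $n \geq k$ survive, yielding \eqref{8.32}.

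Third, I would identify the eigenfunctions. For each $n \geq k$ the polynomial $P_n^{-k,\b}$ defined by the limit in \eqref{9.28} is a distributional solution of $\tau_{-k,\b} u = z_n u$ with $z_n = n(n+1-k+\b)$. The explicit prefactor $((1-x)/2)^k$ in \eqref{9.28} gives the estimate $|P_n^{-k,\b}(x)|^2 r_{-k,\b}(x) = O\big((1-x)^k\big)$ as $x \uparrow 1$, which is integrable; since $\tau_{-k,\b}$ is limit-point at $1$, no boundary condition is imposed there. At $x = -1$, which is regular for $\b \in (-1,0)$ and limit-circle for $\b \in (0,1)$, the polynomial must be checked against the generalized boundary values of \eqref{4.11}: for $\b \in (0,1)$ the boundedness of $P_n^{-k,\b}$ at $-1$ together with the divergent nonprincipal solution $\hatt u_{-1,-k,\b}(0,x) \sim c(1+x)^{-\b}$ forces $\wti{P}_n^{-k,\b}(-1) = 0$, i.e., the Friedrichs condition; for $\b \in (-1,0)$ the polynomial is smooth at $-1$ and one checks by direct differentiation that $(P_n^{-k,\b})^{[1]}(-1) = 0$, i.e., the Neumann condition. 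Hence $P_n^{-k,\b} \in \dom(T_{-k,\b,Jac})$ is an eigenfunction with eigenvalue $z_n$.

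Orthogonality of $\{P_n^{-k,\b}\}_{n \geq k}$ follows from self-adjointness and distinctness of the $z_n$. Completeness follows from the fact that the spectrum is exactly the discrete set $\{z_n\}_{n \geq k}$ with each eigenvalue simple, so the spectral theorem produces a complete orthonormal basis of eigenfunctions, which by the identification just made is (a rescaling of) $\{P_n^{-k,\b}\}_{n \geq k}$. The main obstacle will be the bookkeeping in step one: verifying explicitly that the $m$-function \eqref{6.26} (or its Neumann transform) and the formula \eqref{8.31} represent the same element modulo the ambiguity of Remark \ref{r3.2}. This reduces to a Gamma-function identity in which the $k$-fold shifts are absorbed via $\G(z+k) = (z)_k\G(z)$ and the reflection formula; matching pole sets and residues up to the ambiguity is therefore the crux of the argument.
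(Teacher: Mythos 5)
Your proposal is correct and follows essentially the same route as the paper: the paper's argument is precisely the discussion preceding the theorem (the $(1-x)^k$ prefactor in \eqref{9.28} giving local square integrability at the limit-point endpoint $x=1$, the boundary conditions at $x=-1$, and the resulting identification of poles and eigenfunctions), supplemented by Remark \ref{r9.5} which carries out your ``crux'' step of matching \eqref{8.31} with \eqref{6.26} (resp.\ the negative reciprocal of \eqref{6.18}) via $k$ applications of $\Gamma(z+1)=z\Gamma(z)$, and by Remark \ref{r9.4} which gives an alternative completeness proof you do not need. One small caution: since $x=-1$ is regular or limit circle here, the $m$-function of Section \ref{s6} is fixed by the normalization \eqref{3.2}, so the Remark \ref{r3.2} ambiguity you invoke is not actually available and the comparison must be an exact identity --- fortunately the rational factor produced by the Pochhammer shifts is identically $1$, so this is harmless.
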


\begin{remark} \lb{r9.4} 
The statement concerning completeness in Theorem \ref{t9.3} can also be derived directly from the formula (cf. \cite[eq.~4.22.2]{Sz75})
\begin{align}
    \binom{n}{k} P^{-k,\b}_n(x) = \binom{n+\b}{k}\Big(\frac{x-1}{2}\Big)^k P^{k,\b}_{n-k}(x), \quad n \in \bbN, \; n \geq k,
    \; \beta \in (-1,1) \backslash \{0\}, 
\end{align}
and the known completeness of the set of Jacobi polynomials $\big\{P^{k,\b}_{n-k}(\dott)\big\}_{n \in \bbN, \, n \geq k}$ in $L^2\big((-1,1);r_{\a,\b} dx\big)$, $\beta \in (-1,1) \backslash \{0\}$ (see, e.g., \cite[Corollary~5.7.5]{AAR99}, 
\cite[Theorem~4.1.5]{BW10}, \cite[Theorem~3.1.5]{Sz75}).    \hfill $\diamond$
\end{remark}

\begin{remark} \lb{r9.5} 
$(i)$ One notes that the case $\beta\in(0,1),\ \alpha\in(-\bbN)$ in Theorem \ref{t9.3} was already explicitly covered in \eqref{6.26}, but at first glance the two results seem to differ. This apparent discrepancy can be addressed through a simple change of variables. From \eqref{6.26}, one has for $\alpha=-k\in(-\bbN)$ and using the substitution $m=n+k\geq k,\ n\in\bbN_0$,
\begin{align}
\begin{split}
\sigma(T_{F,-k,\beta}) = \{(n+k)(n+1+\beta)\}_{n\in\bbN_0}=\{m(m-k+1+\beta)\}_{m\in\bbN,\, m\geq k},&  \\ 
\beta\in(0,1),&
\end{split} 
\end{align}
in agreement with \eqref{8.31}.
 
The remaining case when $\beta\in(-1,0)$ in Theorem \ref{t9.3} can similarly be verified by using Theorem \ref{t6.1} with $\gamma=\pi/2$ along with \eqref{6.18}. In particular, the $m$-function for $\gamma=\pi/2$ for this case is simply the negative reciprocal of \eqref{6.18}, which is formally \eqref{6.26} with only the $\beta$ range changed.
\\[1mm]
$(ii)$ To directly compare the $m$-functions instead, by $k$ applications of the functional equation $\Gamma(z+1)=z\Gamma(z)$
to each of the Gamma functions appearing in \eqref{8.31}, one can verify this $m$-function equals $m_{0,-k,\beta}(z)$ for $ \beta\in(0,1)$ given in \eqref{6.26}. From the previous discussion, $m_{\frac{\pi}{2},-k,\beta}(z)$ for $\beta\in(-1,0)$ is formally the same as $m_{0,-k,\beta}(z)$ with only the $\beta$ range changed; hence this case follows from the previous one. 
 \hfill $\diamond$
\end{remark}

\appendix

\section{The Hypergeometric and Jacobi Differential Equations} \lb{sA}

In this appendix we provide the connection between the hypergeometric differential equation (cf. \cite[Sect. 15.5]{AS72})
\begin{align}\lb{A.1}
\xi(1-\xi)w''(\xi)+[c-(a+b+1)\xi]w'(\xi)-abw(\xi)=0, \quad \xi \in (0,1),
\end{align}
and the Jacobi differential equation
\begin{align}\lb{A.2}
\begin{split}
\tau_{\a,\b} y(z,x) = -(1-x^2)y''(z,x)+[\a-\b+(\a+\b+2)x]y'(z,x) = z y(z,x),&   \\
\a,\b\in\bbR,\; x\in(-1,1).&
\end{split}
\end{align}

Making the substitution $\xi=(1+x)/2$ in \eqref{A.2} yields
\begin{align}
\begin{split}\lb{A.3}
\xi(1-\xi)y''(z,\xi)+[\b+1-(\a+\b+2)\xi]y'(z,\xi)+zy(z,\xi)=0,\\
\a,\b\in\bbR,\; \xi\in(0,1).
\end{split}
\end{align}
Similarly, the substitution $\xi=(1-x)/2$ in \eqref{A.2} implies
\begin{align}
\begin{split}\lb{A.4}
\xi(1-\xi)y''(z,\xi)+[\a+1-(\a+\b+2)\xi]y'(z,\xi)+zy(z,\xi)=0,\\
\a,\b\in\bbR,\; \xi\in(0,1),
\end{split}
\end{align}
which has effectively only interchanged the roles of the parameters $\a$ and $\b$.

At the endpoints of the Jacobi equation, $x=-1$ and $x=1$ respectively, the substitutions used to arrive at \eqref{A.3} and \eqref{A.4} both give $\xi=0$, hence we next consider solutions of \eqref{A.1} near $\xi=0$ (cf. \cite[Eqs.~15.5.3, 15.5.4]{AS72})
\begin{align}
& w_{1,0}(\xi) = \mathstrut_2F_1(a,b;c;\xi) = \sum_{n \in \bbN_0} \f{(a)_n (b)_n}{(c)_n} \f{\xi^n}{n!}, \quad 
a, b \in \bbC, \; c \in \bbC \backslash (-\bbN_0), \; \xi \in (0,1),   \no \\
& w_{2,0}(\xi) = \xi^{1-c}\mathstrut_2F_1(a-c+1,b-c+1;2-c;\xi),\quad a, b \in \bbC, \; (c-1) \in \bbC \backslash \bbN,   \lb{A.5} \\
& \hspace*{9.85cm} \xi \in (0,1).     \no 
\end{align}
In addition, see \cite[p.~563]{AS72}, 
\begin{equation}
\text{$w_{1,0}$ and $w_{2,0}$ are linearly independent if } \, (a-b), (c-a-b), c \in \bbC \backslash \bbZ.    
\end{equation} 
For $c = k+1 \in \bbN$ we will employ (cf. \cite[eq.~15.5.16--19]{AS72}) 
\begin{align}
 w_{1,0}(\xi) &= \mathstrut_2F_1(a,b;k+1;\xi), \quad a, b \in \bbC, \; \xi \in (0,1),     \no \\
    w_{2,0}^{\ln,k}(\xi) &= \mathstrut_2F_1(a,b;k+1;\xi)\ln(\xi)+\sum_{n=1}^\infty \frac{(a)_n(b)_n}{(k+1)_n n!}\xi^n\big[\psi(a+n)-\psi(a)      \label{A.7} 
    \\\notag
    &\quad +\psi(b+n)-\psi(b)-\psi(k+1+n)+\psi(k+1)-\psi(n+1)-\g_E\big]
    \\\notag
    &\quad -\sum_{n=1}^k \frac{(n-1)!(-k)_n}{(1-a)_k(1-b)_k} \xi^{-n}, \quad a,b \in \bbC \backslash (- \bbN_0),\; \xi \in (0,1),    \no
\end{align}
where the superscipt ``$\ln$'' indicates the presence of a logarithmic term (familiar from Frobenius theory). For the special case $c = 1$ $(k=0)$, the last sum disappears resulting in
\begin{align}
& w_{1,0}(\xi) = \mathstrut_2F_1(a,b;1;\xi), \quad a, b \in \bbC, \; \xi \in (0,1),  \no \\
& w_{2,0}^{\ln,0}(\xi) = \mathstrut_2F_1(a,b;1;\xi) \, \ln(\xi)+\sum_{n\in\bbN}\f{(a)_n(b)_n}{(n!)^2}\xi^n \lb{A.8} \\
&\quad \times [\psi(a+n)-\psi(a)+\psi(b+n)-\psi(b)-2\psi(n+1)-2\gamma_E], 
\quad a,b \in \bbC \backslash (- \bbN_0),    \no \\
& \hspace*{10.67cm} \xi \in (0,1).    \no 
\end{align}
In the case $c = 1 - k \in -\bbN_0$ or $k \in \bbN$ we can use \cite[Eq.~15.5.20,21]{AS72}
\begin{align}
    w_{1,0}(\xi) &= \xi^k \mathstrut_2F_1(a+k,b+k;1+k;\xi), \quad a, b \in \bbC, \; \xi \in (0,1),      \no \\
    w_{2,0}^{\ln,-k}(\xi) &= \xi^k \mathstrut_2F_1(a+k,b+k;1+k;\xi) \ln(\xi)        \no \\ \label{A.9}
    +\xi^k \sum_{n\in\bbN}& \xi^n \frac{(a+k)_n(b+k)_n}{(1+k)_n n!} [\psi(a+k+n)-\psi(a+n)+\psi(b+k+n) 
     \\ 
    &- \psi(b+n) - \psi(k+1+n) + \psi(k+1)-\psi(n+1)-\g_E]     \no \\
    &-\sum_{n=1}^k \frac{(n-1)!(-k)_n}{(1-a-k)_n(1-b-k)_n}\xi^{k-n}, \quad a,b \in \bbC \backslash (- \bbN_0),\; \xi \in (0,1).    \no
\end{align}
In addition,
\begin{equation}
\text{$w_{1,0}$ and $w_{2,0}^{\ln,k}$ are linearly independent if } \, a, b \in \bbC \backslash (-\bbN_0), \,k \in \bbN_0.    
\end{equation}
Here $\mathstrut_2F_1(\dott,\dott;\dott;\dott)$ denotes the hypergeometric function $($see, e.g., \cite[Ch.~15]{AS72}$)$, $\psi(\dott) = \Gamma'(\dott)/\Gamma(\dott)$ the Digamma function, $\gamma_{E} = - \psi(1) = 0.57721\dots$ represents Euler's constant, and 
\begin{equation}
(\zeta)_0 =1, \quad (\zeta)_n = \Gamma(\zeta + n)/\Gamma(\zeta), \; n \in \bbN, 
\quad \zeta \in \bbC \backslash (-\bbN_0),     \lb{A.11} 
\end{equation}
abbreviates Pochhammer's symbol (see, e.g., \cite[Ch.~6]{AS72}). 

Comparing \eqref{A.1} and \eqref{A.3}, and identifying
\begin{align}
\begin{split} 
& a = [1+ \al + \b +\sigma_{\a,\b}(z)]/2,\quad b =  [1+ \al + \b - \sigma_{\a,\b}(z)]/2,   \\
& c= 1 + \b,     \lb{A.12} \\
& \sigma_{\a,\b}(z) = \big[(1+\a+\b)^2+4z\big]^{1/2},    \\
& \xi=(1+x)/2,    
\end{split} 
\end{align}
in \eqref{A.5} and \eqref{A.8}, one obtains for the solutions of the Jacobi differential equation 
$\tau_{\a,\b} y(z,\dott) = z y(z,\dott)$ (cf.\ \eqref{A.2}) near $x=-1$, 
\begin{align}
& y_{1,\a,\b,-1}(z,x) = \mathstrut_2F_1(a_{\a,\b,\sigma_{\a,\b}(z)},a_{\a,\b,-\sigma_{\a,\b}(z)};1+\b;(1+x)/2),  \lb{A.13} \\
& \hspace*{7.5cm} \b \in \bbR \backslash (-\bbN),        \no \\
& y_{2,\a,\b,-1}(z,x) = (1+x)^{-\b} \mathstrut_2F_1(a_{\a,-\b,\sigma_{\a,\b}(z)},a_{\a,-\b,-\sigma_{\a,\b}(z)};1-\b;(1+x)/2), \no \\
& \hspace*{9.2cm} \b \in \bbR\backslash \bbN_0,   \lb{A.14} \\
& y_{2,\a,0,-1}(z,x) =\mathstrut_2F_1(a_{\a,0,\sigma_{\a,0}(z)},a_{\a,0,-\sigma_{\a,0}(z)};1;(1+x)/2) \, \ln((1+x)/2)     \no \\
&\quad +\sum_{n\in\bbN} \f{(a_{\a,0,\sigma_{\a,0}(z)})_n (a_{\a,0,-\sigma_{\a,0}(z)})_n}{2^n(n!)^2} 
(1+x)^n  \lb{A.15} \\
& \hspace*{1.4cm} \times [\psi(a_{\a,0,\sigma_{\a,0}(z)}+n) - \psi(a_{\a,0,\sigma_{\a,0}(z)}) 
+ \psi(a_{\a,0,-\sigma_{\a,0}(z)}+n)   \no \\
& \hspace*{1.9cm} -\psi(a_{\a,0,-\sigma_{\a,0}(z)}) -2\psi(n+1)-2\g_E], \quad \b = 0;    \no \\
& \hspace*{5.1cm} \a\in\bbR,\; z\in\bbC,\; x\in(-1,1),    \no 
\end{align}
where we once again used the abbreviation 
\begin{align}
a_{\mu,\nu,\pm\sigma} = [1 + \mu+\nu \pm \sigma]/2,\quad \mu, \nu, \sigma \in \bbC.    
\end{align}
In the case $\b = k \in \bbN_0$ and $\a \in [0,\infty)$ we define (cf.~\eqref{A.7})
\begin{align} \lb{A.17}
    & y_{1,\a,k,-1}(z,x) = \mathstrut_2F_1(a_{\a,k,\s_{\a,k}(z)}, a_{\a,k,-\s_{\a,k}(z)}; 1+k;(1+x)/2),
    \\\no
    & y_{2,\a,k,-1}(z,x) = \mathstrut_2F_1(a_{\a,k,\s_{\a,k}(z)}, a_{\a,k,-\s_{\a,k}(z)}; 1+k;(1+x)/2)\ln((1+x)/2)
    \\
    &\quad+\sum_{n=1}^\infty \frac{(a_{\a,k,\s_{\a,k}(z)})_n(a_{\a,k,-\s_{\a,k}(z)})_n}{2^n(1+k)_n n!}(1+x)^n
    \\\no
    &\hspace*{1.4cm} \times \, \big[\psi(a_{\a,k,\s_{\a,k}(z)}+n)-\psi(a_{\a,k,\s_{\a,k}(z)})+\psi(a_{\a,k,-\s_{\a,k}(z)}+n)
    \\\notag
    &\hspace*{1.9cm} -\,\psi(a_{\a,k,-\s_{\a,k}(z)})-\psi(1+k+n)+\psi(1+k)-\psi(n+1)-\g_E\big]
    \\\notag
    & \hspace*{2.4cm}-\sum_{n=1}^k \frac{2^n(n-1)!(-k)_n}{(1-a_{\a,k,\s_{\a,k}(z)})_k(1-a_{\a,k,-\s_{\a,k}(z)})_k} (1+x)^{-n};
    \\\no
    &\hspace*{6.65cm} \a \geq 0, \; z \in \bbC, \; x \in (-1,1),
\end{align}
while for $\b = -k \in -\bbN$ and $\a \in [0,\infty)$ one obtains from \eqref{A.9}, 
\begin{align}
    y_{2,\a,-k,-1}(z,x) &= (1+x)^k \mathstrut_2F_1(a_{\a,k,\s_{\a,k}(z)}+k, a_{\a,k,-\s_{\a,k}(z)}+k; 1+k;(1+x)/2),
    \\ \no
    y_{1,\a,-k,-1}(z,x) &= (1+x)^k \mathstrut_2F_1(a_{\a,k,\s_{\a,k}(z)}+k, a_{\a,k,-\s_{\a,k}(z)}+k; 1+k;(1+x)/2)
    \\
    \times \, \ln((1+ & x)/2)+\sum_{n=1}^\infty \frac{(a_{\a,k,\s_{\a,k}(z)}+k)_n(a_{\a,k,-\s_{\a,k}(z)}+k)_n}{2^n(1+k)_n n!}(1+x)^n  \lb{A.20}
    \\\no
    \times \, \big[\psi(&a_{\a,k,\s_{\a,k}(z)}+k+n)-\psi(a_{\a,k,\s_{\a,k}(z)}+n)+\psi(a_{\a,k,-\s_{\a,k}(z)}+k+n)
    \\\notag
    -\,\psi(&a_{\a,k,-\s_{\a,k}(z)}+n)-\psi(k+1+n)+\psi(1+k)-\psi(n+1)-\g_E\big]
    \\\notag
    &-\sum_{n=1}^k \frac{2^n(n-1)!(-k)_n}{(1-a_{\a,k,\s_{\a,k}(z)}-k)_n(1-a_{\a,k,-\s_{\a,k}(z)}-k)_n} (1+x)^{-n};
    \\\no
    &\hspace*{5.55cm} \a \geq 0, \; z \in \bbC, \; x \in (-1,1).
\end{align}
Again, $y_{1,\a,\b,-1}(z,\dott)$ and $y_{2,\a,\b,-1}(z,\dott)$ are linearly independent for $\a \in \bbR$, $\b \in \bbR \backslash \bbZ$. 
Similarly, $y_{1,\a,0,-1}(z,\dott)$ and $y_{2,\a,0,-1}(z,\dott)$ are linearly independent for $\a \in \bbR$. We note that \eqref{A.17}--\eqref{A.20} make sense for $\a \in \bbR$, but we will choose different solutions for $\a \in (-\infty,0)$ which greatly simplify the computation of the $m$-functions in Section 7:
\begin{align} 
    y_{1,\a,\pm k, -1}(z,x) &= (1-x)^{-\a}y_{1,-\a, \pm k,-1}(z+(1\pm k)\a),x),
    \no\\ \lb{A.21}
    y_{2,\a, \pm k,-1}(z,x) &= (1-x)^{-\a}y_{2,-\a,\pm k,-1}(z+(1\pm k)\a,x); 
    \\\no
    &\hspace*{2.53cm} \a \in (-\infty,0), \;  k \in \bbZ \backslash \{0\}. 
\end{align}
That \eqref{A.21} are indeed solutions to the Jacobi differential equation $\tau_{\a,\pm k}y = z y(z,\dott)$ follows from \eqref{4.3}.
\\
In precisely the same manner one obtains for solutions of $\tau_{\a,\b} y(z,\dott) = 0$ near $x=1$, 
\begin{align} 
& y_{1,\a,\b,1}(z,x) = \mathstrut_2F_1(a_{\a,\b,\sigma_{\a,\b}(z)},a_{\a,\b,-\sigma_{\a,\b}(z)};1+\a;(1-x)/2),   \lb{A.22} \\
& \hspace{7.5cm} \a \in \bbR \backslash (- \bbN),       \no \\ 
& y_{2,\a,\b,1}(z,x) = (1-x)^{-\a} \mathstrut_2F_1(a_{-\a,\b,\sigma_{\a,\b}(z)},a_{-\a,\b,-\sigma_{\a,\b}(z)};1-\a;(1-x)/2), \no \\
& \hspace*{9cm} \a \in \bbR \backslash \bbN,   \lb{A.23} \\
& y_{2,0,\b,1}(z,x) = \mathstrut_2F_1(a_{0,\b,\sigma_{0,\b}(z)},a_{0,\b,-\sigma_{0,\b}(z)};1;(1-x)/2) \, \ln((1-x)/2)    \no \\
&\quad +\sum_{n\in\bbN} \f{(a_{0,\b,\sigma_{0,\b}(z)})_n (a_{0,\b,-\sigma_{0,\b}(z)})_n}{2^n(n!)^2}
(1-x)^n    \lb{A.24} \\
& \hspace*{1.4cm} \times [\psi(a_{0,\b,\sigma_{0,\b}(z)}+n) - \psi(a_{0,\b,\sigma_{0,\b}(z)})
+\psi(a_{0,\b,-\sigma_{0,\b}(z)}+n)    \no \\ 
& \hspace*{1.9cm} -\psi(a_{0,\b,-\sigma_{0,\b}(z)}) - 2\psi(n+1)-2\g_E], \quad \a =0;     \no \\
& \hspace*{5.15cm} \b\in\bbR,\; z\in\bbC,\; x\in(-1,1).    \no
\end{align}
Again, $y_{1,\a,\b,1}(z,\dott)$ and $y_{2,\a,\b,1}(z,\dott)$ are linearly independent for $\a \in \bbR \backslash \bbZ$, $\b \in \bbR$. Similarly, $y_{1,0,\b,1}(z,\dott)$ and $y_{2,0,\b,1}(z,\dott)$ are linearly independent for $\b \in \bbR$. In the limit-point case at $x = 1$, that is, for $\a \in (-\infty,-1]\cup[1,\infty)$, we only need the principal solutions, which are $y_{1,\a,\b,1}(z,\dott)$ for $\a \in [1,\infty)$ and $y_{2,\a,\b,1}(z,\dott)$ for $\a \in (-\infty,-1]$. Thus, we see from \eqref{A.22} and \eqref{A.23} that these cases are already covered, and we do not need to define an additional solution for $\a \in \bbZ \backslash  \{0\}$. 

Since $(a_{\a,\b,\sigma_{\a,\b}(z)})_{n} (a_{\a,\b,-\sigma_{\a,\b}(z)})_{n}$, $n \in \bbN_0$,
depends polynomially on $z \in \bbC$, one infers that 
\begin{equation}
\text{for fixed $x \in (0,1)$, $y_{j,\a,\b,\pm 1}(z,x)$, $j=1,2$, are entire with respect to $z \in \bbC$.}  
\end{equation} 
Moreover $y_{j,\a,\b,\pm 1}(z,x)$ satisfy the relations (cf.~ \eqref{4.3} and \eqref{5.15})
\begin{align} \lb{A.26}
y_{1,\a,\b,-1}(z,x) &= (1+x)^{-\b} y_{2,\a, -\b,-1}(z+(1+\a)\b,x),
\\&\hspace{3.25cm} \a \in \bbR, \; \b \in \bbR \backslash \{0\},
\no\\\lb{A.27}
y_{2,\a,\b,-1}(z,x) &= (1+x)^{-\b} y_{1,\a,-\b,-1}(z+(1+\a)\b,x),
\\\no
&\hspace*{3.25cm} \a \in \bbR, \; \b \in \bbR \backslash \{0\},
\\\no
\\\lb{A.28}
y_{1,\a,\b,1}(z,x) &= (1-x)^{-\a} y_{2,-\a, \b,1}(z+(1+\b)\a,x),
\\\no
&\hspace*{3.05cm}\a \in \bbR \backslash \{0\}, \; \b \in \R,
\\\lb{A.29}
y_{2,\a,\b,1}(z,x) &= (1-x)^{-\a} y_{1,-\a,\b,1}(z+(1+\b)\a,x),
\\\no
&\hspace*{3.05cm}\a \in \bbR \backslash \{0\}, \; \b \in \R, 
\end{align}
where we used the fact
\begin{align} \lb{A.30}
    \s_{\a,\b}(z) =
    \begin{cases}
    \s_{\a,-\b}(z+(1+\a)\b),
    \\
    \s_{-\a,\b}(z+(1+\b)\a),
    \\
    \s_{-\a,-\b}(z+\a+\b).
    \end{cases}
\end{align}
\medskip

\begin{remark}\lb{rA1}
We conclude this appendix by briefly discussing Jacobi polynomials and quasi-rational eigenfunctions. The $n$th Jacobi polynomial is defined as (see \cite[eq. 18.5.7]{OLBC10})
\begin{align}
\begin{split}
    P_n^{\a, \b}(x) = \dfrac{(\a+1)_n}{n!} \mathstrut_2F_1(-n, \, n + \a + \b+1; \,
     \a + 1; \, (1-x)/2),& \lb{A.31} \\
     n\in\N_0,\; -\a\notin\N,\; -n-\a-\b-1 \notin \N, \; x \in (-1,1),&
\end{split}
\end{align}
and can be defined by continuity for all parameters $\a, \b \in \R$. One notes that $P_n^{\a,\b}(\dott)$ is a polynomial of degree at most $n$, and has strictly smaller degree if and only if $-n-\a-\b \in \{1, \dots , n \}$ (cf.~\cite[p.~64]{Sz75}). It satisfies the equation
\begin{align}
    \tau_{\a,\b} P_n^{\a, \b}(x) = \l^{\a,\b}_n  P_n^{\a, \b}(x), \quad x \in (-1,1), 
\end{align}
with 
\begin{align}
    \l^{\a,\b}_n = n(n+1+\a+\b).
\end{align}
In particular, one obtains (cf.\ \cite[p.~87]{Is05}, \cite[p.~62]{Sz75})
\begin{equation}
P_0^{\a,\b}(x)=1, \quad P_1^{\a,\b}(x) = [(2+\a+\b) x + \a - \b]/2, \, \text{ etc.,} \; x \in (-1,1). 
\end{equation}

In addition, one can verify that the Jacobi polynomials are solutions of the Jacobi differential equation \eqref{A.2} with Neumann boundary conditions at $x = 1$ (resp., $x=-1$) if $\a\in(-1,0)$ (resp., $\b\in(-1,0)$) and Friedrichs boundary conditions at $x=1$ (resp., $x=-1$) if $\a \in [0,\infty)$ (resp., $\b \in [0,\infty)$).

More generally, all quasi-rational solutions, meaning the logarithmic derivative being rational, can be derived from the Jacobi polynomials together with \eqref{4.3} and are summarized in Table \ref{table1}, which is taken from \cite{Bo19}. Here $(1-x)^{-\a} P_n^{-\a,\b}(x)$ satisfy at $x = 1$ the Friedrichs boundary condition (b.c.) for $\a \in (-\infty, 0]$ and the Neumann b.c. for $\a \in (0,1)$, while at $x = -1$ they satisfy the Friedrichs b.c. for $\b \in [0,\infty)$ and the Neumann b.c. for $\b \in (-1,0)$. For $(1+x)^{-\b} P_n^{\a,-\b}(x)$ the roles of $\a$ and $\b$ interchange compared to the last case, implying the Friedrichs b.c. at $x = 1$ for $\a \in [0,\infty)$, and the Neumann b.c. for $\a \in (-1,0)$, and at $x = -1$, the Friedrichs b.c. for $\b \in (-\infty,0]$, the Neumann b.c. for $\b \in (0,1)$. Finally $(1-x)^{-\a}(1+x)^{-\b} P_n^{-\a,-\b}(x)$ satisfy at $x = 1$ (resp., $x=-1$) the Friedrichs b.c. for $\a \in (-\infty,0]$ (resp., $\b \in (-\infty,0]$) and the Neumann b.c. for $\a \in (0,1)$ (resp., $\b\in(0,1)$). 
\hfill $\diamond$
\end{remark}

\begin{table}[h]
    \centering
    \begin{tabular}{|l| l|}
    \hline
      Eigenfunctions   & Eigenvalues  \\
    \hline 
    $P_n^{\a,\b}(x)$ & $n(n+1+\a+\b)$
    \\
    $(1-x)^{-\a} P_n^{-\a,\b}(x)$ & $n(n+1-\a+\b)-\a(1+\b)$
    \\
    $(1+x)^{-\b} P_n^{\a,-\b}(x)$ & $n(n+1+\a-\b)-\b(1+\a)$
    \\
    $(1-x)^{-\a}(1+x)^{-\b} P_n^{-\a,-\b}(x)$ & $n(n+1-\a-\b)-(\a+\b)$
    \\
    \hline
    \end{tabular}
    \vspace{10pt}
    \caption{Formal quasi-rational eigensolutions of $\tau_{\a,\b}$}
    \label{table1}
\end{table}

\section{Connection Formulas}\lb{sB}

In this appendix we provide the connection formulas utilized to find the solution behaviors in Appendix \ref{sC}, 
\eqref{C.1}--\eqref{C.12}. We express them using $w_{1,0}(\xi)$ and $w_{2,0}(\xi)$ $\big(w_{2,0}^{\ln,k}(\xi)\big)$ and their analogs $w_{1,1}(\xi)$ and $w_{2,1}(\xi)$ $\big(w_{2,1}^{\ln,k}(\xi)\big)$ at the endpoint $\xi = 1$.

We recall the relations \eqref{A.12} connecting the parameters $a,b,c$ and $\a, \b$ and note that $\xi \in (0,1)$ for the remainder of this appendix. \\[1mm] 
\noindent 
{\boldmath $\mathbf{(I)}$ {\bf The case $\a \in \R \backslash \Z$, $\b \in (-1,1) \backslash \{0\}$, that is, $c \in (0,2) \backslash \{1\}$, $a+b-c \in \R \backslash \Z$\,:}} \\[1mm] 
The two solutions $w_{1,1}(\xi)$ and $w_{2,1}(\xi)$ can be obtained from \eqref{A.5} by the change of parameters
\begin{equation} \lb{B.1}
    (a, \, b, \, c, \, \xi) \rightarrow (a, \, b, \, a+b-c+1, \, 1 - \xi),
\end{equation}
or, explicitly,
\begin{align} \label{B.2}
    w_{1,1}(\xi) &= \mathstrut_2F_1(a,b;a+b-c+1;1-\xi), 
    \\ \label{B.3}
    w_{2,1}(\xi) &= (1-\xi)^{c-a-b}\mathstrut_2F_1(c-a,c-b;c-a-b+1;1-\xi).
\end{align}
The two connection formulas are given by (cf. \cite[eq.~15.10.21--22]{OLBC10})
\begin{align} \lb{B.4}
w_{1,0}(\xi) &= \f{\Gamma(c)\Gamma(c-a-b)}{\Gamma(c-a)\Gamma(c-b)}w_{1,1}(\xi)+\f{\Gamma(c)\Gamma(a+b-c)}{\Gamma(a)\Gamma(b)}w_{2,1}(\xi),  \\
w_{2,0}(\xi) &= \frac{\G(2-c)\G(c-a-b)}{\G(1-a)\G(1-b)}w_{1,1}(\xi)+\frac{\G(2-c)\G(a+b-c)}{\G(a-c+1)\G(b-c+1)}w_{2,1}(\xi).    \lb{B.5}
\end{align}
One notes that poles occur on the right-hand sides of \eqref{B.4}, \eqref{B.5} whenever $(a+b-c) \in \bbZ$. Using \eqref{B.1} we can also express $w_{1,1}(\xi)$ or $w_{2,1}(\xi)$ as a linear combination of $w_{1,0}(\xi)$ and $w_{2,0}(\xi)$:
\begin{align}
    w_{1,1}(\xi) &= \f{\Gamma(a+b-c+1)\Gamma(1-c)}{\Gamma(a-c+1)\Gamma(b-c+1)}w_{1,0}(\xi)+\f{\Gamma(a+b-c+1)\Gamma(c-1)}{\Gamma(a)\Gamma(b)}w_{2,0}(\xi),     \lb{B.6} \\
    w_{2,1}(\xi) &= \frac{\G(1+c-a-b)\G(1-c)}{\G(1-a)\G(1-b)}w_{1,0}(\xi)+\frac{\G(1+c-a-b)\G(c-1)}{\G(c-a)\G(c-b)}w_{2,0}(\xi).     \lb{B.7} 
\end{align}

\noindent 
{\boldmath $\mathbf{(II)}$ {\bf The case $\a = 0, \ \b \in \bbR \backslash \bbZ$, that is, $c \in \bbR \backslash \bbZ$, 
$a+b = c$\,:}} \\[1mm] 
The solution  $w_{1,0}(\xi) =  \mathstrut_2F_1(a,b;a+b;\xi)$ can be expanded at $\xi = 1$ (cf. \cite[eq. 15.3.10]{AS72}):
\begin{align}
\begin{split} \lb{B.8}
\mathstrut_2F_1(a,b;a+b;\xi)&=\f{\Gamma(a+b)}{\Gamma(a)\Gamma(b)}\sum_{n\in\N_0} \f{(a)_n(b)_n}{(n!)^2} 
[2\psi(n+1)-\psi(a+n)-\psi(b+n)\\
&\hspace*{4.05cm} -\ln(1-\xi)](1-\xi)^n.
\end{split}
\end{align}
Meanwhile, two linearly independent solutions at $\xi = 1$ are given by (cf. \eqref{A.8}):
\begin{align} 
        w_{1,1}(\xi) &= \mathstrut_2F_1(a,b;1;1-\xi),     \lb{B.9} \\
        w_{2,1}^{\ln,0}(\xi) &= \mathstrut_2F_1(a,b;1;1-\xi)\ln(1-\xi)+\sum_{n\in\bbN}\f{(a)_n(b)_n}{(n!)^2}(1-\xi)^n  \lb{B.10} \\
        &\quad\, \times[\psi(a+n)-\psi(a)+\psi(b+n)-\psi(b)-2\psi(n+1)-2\gamma_E]. \no
\end{align}
The connection formula for $w_{1,1}(\xi)$ is given by \eqref{B.6} with $a+b=c$. To obtain the second connection formula we need to compare the expansion of $w_{2,1}^{\ln}(\xi)$ at $\xi = 1$, with the expansion of $\mathstrut_2F_1(a,b;a+b;\xi)$ at $\xi = 1$ using \eqref{B.8}. We obtain
\begin{align} \lb{B.11}
    w_{2,1}^{\ln,0}(\xi) =&  - [\psi(1-a)+\psi(1-b)+2\g_{E}] \frac{\G(1-a-b)}{\G(1-a)\G(1-b)} w_{1,0}(\xi) \no
    \\
    &- [\psi(a)+\psi(b)+2\g_{E}] \frac{\G(a+b-1)}{\G(a)\G(b)}w_{2,0}(\xi).
\end{align}
The reflection formula for the $\psi$-function \cite[eq.~6.3.7]{AS72}, 
\begin{align}
    \psi(1-z) - \psi(z) = \pi \cot(\pi z),
\end{align}
also implies 
\begin{align} \lb{B.13}
    w_{1,0}(\xi)&=-\frac{\G(1-a)\G(1-b)}{\G(1-a-b)\pi [\cot(\pi a) + \cot(\pi b)]}
    \no\\
    &\qquad \times\big[[\psi(a)+\psi(b)+2\g_E] w_{1,1}(\xi) + w_{2,1}^{\ln,0}(\xi)\big], \\    
    w_{2,0}(\xi) &= \frac{\G(a)\G(b)}{\G(a+b-1)\pi [\cot(\pi a) + \cot(\pi b)]}
    \no\\
    &\quad\, \times\big[[\psi(1-a)+\psi(1-b)+2\g_E] w_{1,1}(\xi) + w_{2,1}^{\ln,0}(\xi) \big].  \lb{B.14}
\end{align}
{\boldmath $\mathbf{(III)}$ {\bf The case $\a \in \R \backslash \Z, \, \b = 0$, that is, $c = 1$, 
$a + b \in \R \backslash \Z$\,:}} \\[1mm] 
This case is analogous to the previous case, with the roles of $\a$ and $\b$ interchanged. Concretely, this means that the connection formulas \eqref{B.11}--\eqref{B.14} must be changed through the renaming of \eqref{B.1} with $c \rightarrow a+b-c+1=a+b$ as $c = 1$. Since $c$ does not appear in \eqref{B.11}--\eqref{B.14} (we eliminated it through $c = a+b$), we can adopt the aforementioned formulas directly only changing the second index in the $w$'s\footnote{Formula \eqref{B.15} could have been obtained directly from \eqref{B.4} by setting $c = 1$.}
\begin{align} 
    w_{1,0}(\xi) &= \frac{\G(1-a-b)}{\G(1-a)\G(1-b)}w_{1,1}(\xi)+ \frac{\G(a+b-1)}{\G(a)\G(b)}w_{2,1}(\xi),
   \lb{B.15} \\
    w_{2,0}^{\ln,0}(\xi) &=- [\psi(1-a)+\psi(1-b)+2\g_{E}] \frac{\G(1-a-b)}{\G(1-a)\G(1-b)}w_{1,1}(\xi)
   \no \\
    & \quad \, - [\psi(a)+\psi(b)+2\g_E] \frac{\G(a+b-1)}{\G(a)\G(b)}w_{2,1}(\xi),
    \lb{B.16} \\
    w_{1,1}(\xi)&=-\frac{\G(1-a)\G(1-b)}{\G(1-a-b)\pi [\cot(\pi a) + \cot(\pi b)]}
    \no\\
    &\qquad\, \times\big[[\psi(a)+\psi(b)+2\g_E] w_{1,0}(\xi) + w_{2,0}^{\ln,0}(\xi)\big],
    \lb{B.17} \\
    w_{2,1}(\xi) &= \frac{\G(a)\G(b)}{\G(a+b-1)\pi [\cot(\pi a) + \cot(\pi b)]}
    \no\\
    &\quad\, \times\big[[\psi(1-a)+\psi(1-b)+2\g_E] w_{1,0}(\xi) + w_{2,0}^{\ln,0}(\xi)\big].  \lb{B.18} 
\end{align}
{\boldmath $\mathbf{(IV)}$ {\bf The case $\a = \b = 0$, that is, $ a + b = c = 1$:}} \\[1mm] 
For $\a = 0$ and $\b = 0$ the Jacobi differential expression \eqref{4.1} becomes the Legendre differential expression. This case was treated in detail in \cite{GLN20}, we shall only write down the connection formulas for completeness.  

The special solutions $w_{1,j}(\xi)$ and $w_{2,j}^{\ln,0}(\xi)$ for $j = 1,2$ are given by \eqref{A.8} and 
\eqref{B.9}, \eqref{B.10}. One notes that the following relations hold,
\begin{align} \lb{B.19}
    w_{1,1}(\xi) = w_{1,0}(1-\xi), \quad
    w_{2,1}^{\ln,0}(\xi) = w_{2,0}^{\ln,0}(1-\xi).
\end{align}
Using \cite[eq.~15.3.10]{AS72} together with $w_{1,0} = \mathstrut_2F_1(a,b;a+b, \xi)$ 
and Euler's reflection formula for the $\G$-function \cite[eq.~6.1.17]{AS72}, 
\begin{align} \lb{B.20}
    \G(z)\G(1-z) = \pi \csc(\pi z),
\end{align}
one obtains 
\begin{align}
    w_{1,0}(\xi) = - \pi^{-1} \sin(\pi a) \big[[\psi(a)+\psi(b)+2\g_E]w_{1,1}(\xi)+w_{2,1}^{\ln,0}(\xi)\big].
\end{align}
This formula can be also obtained as the limit $\b \to 0$ in \eqref{B.13}. From the two relations \eqref{B.19} we immediately get
\begin{align}
    w_{1,1}(\xi) &= - \pi^{-1} \sin(\pi a) \big[[\psi(a)+\psi(b)+2\g_E]w_{1,0}(\xi)+w_{2,0}^{\ln,0}(\xi)\big],
    \\
    w_{2,0}^{\ln,0}(\xi) &=  \pi^{-1} \sin(\pi a) \big\{\big[[\psi(a)+\psi(b)+2\g_E]^2-\pi^2 [\sin(\pi a)]^{-2}
\big] w_{1,1}(\xi)
    \no\\
    &\quad + [\psi(a)+\psi(b)+2\g_E] w_{2,1}^{\ln,0}(\xi)\big\},
    \\
    w_{2,1}^{\ln,0}(\xi) &=  \pi^{-1} \sin(\pi a) \big\{\big[[\psi(a)+\psi(b)+2\g_E]^2-\pi^2 [\sin(\pi a)]^{-2}\big] 
     w_{1,0}(\xi)
    \no\\
    &\quad + [\psi(a)+\psi(b)+2\g_E] w_{2,0}^{\ln,0}(\xi)\big\}.
\end{align}
{\boldmath $\mathbf{(V)}$ {\bf The case $\a \in [0,\infty), \, \b = k \in \bbN$, that is, $c = 1+k$, $a + b \geq c$\,:}} 
\\[1mm]
The fundamental system at $\xi = 0$ consists of $w_{1,0}(\xi)$ given in \eqref{A.5} and \eqref{A.7}. At the other endpoint we have the fundamental system \eqref{B.2} and \eqref{B.3}. In particular, as $a+b-c+1 = a+b-k$, we see that $w_{1,1}(\xi)$ has the form
\begin{align}
    w_{1,1}(\xi) = \mathstrut_2F_1(a,b;a+b-k;1-\xi), \quad k \in \bbN.
\end{align}
Thus, one can use \cite[eq.~15.3.12]{AS72} to read off the behavior of $w_{1,1}(\xi)$ at $\xi = 0$. Comparing with the expansion of $w_{1,0}(\xi)$ and $w_{2,0}^\ln(\xi)$ at $\xi = 0$, one concludes that
\begin{align} \label{B.26}
    w_{1,1}(\xi) &= -\frac{(-1)^k \G(a+b-k)}{\G(a-k)\G(b-k)k!} 
    \\
    &\quad \quad  \times \notag \big[[\psi(a)+\psi(b)-\psi(k+1)+\g_E]w_{1,0}(\xi)+w_{2,0}^{\ln,k}(\xi)\big].
\end{align}
As this is the only connection formula that we need, we just mention that the other ones can be computed in a similar fashion.

\section{Behavior of $y_{j,\a,\b,\mp 1}(z,x)$, $j=1,2$, near $x=\pm 1$} \lb{sC}

In this appendix we focus on solutions of the Jacobi differential equation. 

We start with the limiting behavior of $y_{j,\a,\b,-1}(z,x)$ as $x\uparrow1$ and apply the connection formulas from Appendix \ref{sB}: 
\begin{align}
& y_{1,\a,\b,-1}(z,x)\underset{x\uparrow1}{=}\f{\Gamma(1+\b)\Gamma(-\a)}
{\Gamma(a_{-\a,\b,\sigma_{\a,\b}(z)})\Gamma(a_{-\a,\b,-\sigma_{\a,\b}(z)})}[1+\Oh(1-x)] \no \\
& \hspace*{2.7cm} +\f{2^{\a} \Gamma(1+\b)\Gamma(\a)}{\Gamma(a_{\a,\b,\sigma_{\a,\b}(z)})
\Gamma(a_{\a,\b,-\sigma_{\a,\b}(z)})} (1-x)^{-\a}[1+\Oh(1-x)], \no \\
& \hspace*{7.7cm} \a \in \bbR \backslash \bbZ,\; \b\in(-1,1),     \lb{C.1} \\  
&y_{1,\a,\b,-1}(z,x)\underset{x\uparrow1}{=}\f{\Gamma(1+\b)\Gamma(-\a)}{\Gamma(a_{-\a,\b,\sigma_{\a,\b}(z)})\Gamma(a_{-\a,\b,-\sigma_{\a,\b}(z)})}[1+\Oh(1-x)]   \no \\
& \hspace*{2.7cm} -\f{(-1)^{-\a} 2^{\a} \Gamma(1+\b)}{(-\a)!\Gamma(a_{\a,\b,\sigma_{\a,\b}(z)})
\Gamma(a_{\a,\b,-\sigma_{\a,\b}(z)})} (1-x)^{-\a}     \lb{C.2} \\
& \hspace*{2.7cm} \quad \times[\ln((1-x)/2)+\gamma_E-\psi(1-\a)+\psi(a_{-\a,\b,\sigma_{\a,\b}(z)})    \no \\
& \hspace*{2.7cm} +\psi(a_{-\a,\b,-\sigma_{\a,\b}(z)})][1+\Oh(1-x)],\quad \a\in (-\bbN),\; \b\in(-1,1),   \no \\
& y_{1,0,\b,-1}(z,x)\underset{x\uparrow1}{=}-\f{\Gamma(1+\b)}{\Gamma(a_{0,\b,\sigma_{0,\b}(z)})\Gamma(a_{0,\b,-\sigma_{0,\b}(z)})}[\ln((1-x)/2)+2\gamma_E     \no \\
& \hspace*{2.3cm} \quad +\psi(a_{0,\b,\sigma_{0,\b}(z)})+\psi(a_{0,\b,-\sigma_{0,\b}(z)})][1+\Oh(1-x)],   \lb{C.3} \\
& \hspace*{7.3cm} \a = 0, \; \b\in(-1,1),  \no \\
& y_{1,\a,\b,-1}(z,x)\underset{x\uparrow1}{=}\f{2^{\a} \Gamma(1+\b)\Gamma(\a)}
{\Gamma(a_{\a,\b,\sigma_{\a,\b}(z)})\Gamma(a_{\a,\b,-\sigma_{\a,\b}(z)})} (1-x)^{-\a}[1+\Oh(1-x)]    \no \\
& \hspace*{2.7cm}-\f{(-1)^{\a}\Gamma(1+\b)}{\a!\Gamma(a_{-\a,\b,\sigma_{\a,\b}(z)})\Gamma(a_{-\a,\b,-\sigma_{\a,\b}(z)})}[\ln((1-x)/2)+\gamma_E  \lb{C.4}\\
& \hspace*{2.7cm} -\psi(\a+1)+\psi(a_{\a,\b,\sigma_{\a,\b}(z)})+\psi(a_{\a,\b,-\sigma_{\a,\b}(z)})]    \no \\
& \hspace*{2.7cm} \quad\times[1+\Oh(1-x)],\quad \a\in\bbN,\; \b\in(-1,1),   \no \\
& y_{2,\a,\b,-1}(z,x)\underset{x\uparrow1}{=}\f{2^{-\b}\Gamma(1-\b)\Gamma(-\a)}
{\Gamma(a_{-\a,-\b,\sigma_{\a,\b}(z)})\Gamma(a_{-\a,-\b,-\sigma_{\a,\b}(z)})}[1+\Oh(1-x)]     \no \\
& \hspace*{2.7cm} +\f{2^{\a-\b}\Gamma(1-\b)\Gamma(\a)}{\Gamma(a_{\a,-\b,\sigma_{\a,\b}(z)})
\Gamma(a_{\a,-\b,-\sigma_{\a,\b}(z)})} (1-x)^{-\a}[1+\Oh(1-x)],     \no \\
& \hspace*{7cm} \a \in \bbR \backslash \bbZ,\; \b\in(-1,1)\backslash\{0\},      \lb{C.5} \\
& y_{2,\a,\b,-1}(z,x)\underset{x\uparrow1}{=}\f{2^{-\b}\Gamma(1-\b)\Gamma(-\a)}
{\Gamma(a_{-\a,-\b,\sigma_{\a,\b}(z)})\Gamma(a_{-\a,-\b,-\sigma_{\a,\b}(z)})}[1+\Oh(1-x)]     \no \\
& \hspace*{2.7cm} -\f{(-1)^{-\a}2^{\a-\b}\Gamma(1-\b)}{(-\a)!\Gamma(a_{\a,-\b,\sigma_{\a,\b}(z)})
\Gamma(a_{\a,-\b,-\sigma_{\a,\b}(z)})} (1-x)^{-\a}[\ln((1-x)/2)    \no \\
& \hspace*{2.7cm} +\gamma_E-\psi(1-\a)+\psi(a_{-\a,-\b,\sigma_{\a,\b}(z)})+\psi(a_{-\a,-\b,-\sigma_{\a,\b}(z)})]     \no \\
& \hspace*{2.7cm} \quad \times[1+\Oh(1-x)],\quad \a\in (-\bbN),\; \b\in(-1,1)\backslash\{0\},     \lb{C.6} \\
& y_{2,0,\b,-1}(z,x)\underset{x\uparrow1}{=}-\f{2^{-\b}\Gamma(1-\b)}{\Gamma(a_{0,-\b,\sigma_{0,\b}(z)})
\Gamma(a_{0,-\b,-\sigma_{0,\b}(z)})}[\ln((1-x)/2)+2\gamma_E    \no \\
& \hspace*{2.3cm}\quad +\psi(a_{0,-\b,\sigma_{0,\b}(z)}) + \psi(a_{0,-\b,-\sigma_{0,\b}(z)})][1+\Oh(1-x)],      \lb{C.7} \\
& \hspace*{7.1cm} \a = 0, \; \b\in(-1,1)\backslash\{0\},     \no \\ 
& y_{2,\a,\b,-1}(z,x)\underset{x\uparrow1}{=}\f{2^{\a-\b}\Gamma(1-\b)\Gamma(\a)}
{\Gamma(a_{\a,-\b,\sigma_{\a,\b}(z)})\Gamma(a_{\a,-\b,-\sigma_{\a,\b}(z)})} (1-x)^{-\a}[1+\Oh(1-x)]      \no \\
& \hspace*{2.7cm} -\f{(-1)^{\a}2^{-\b}\Gamma(1-\b)}{\a!\Gamma(a_{-\a,-\b,\sigma_{\a,\b}(z)})
\Gamma(a_{-\a,-\b,-\sigma_{\a,\b}(z)})}[\ln((1-x)/2)+\gamma_E 
\no \\
& \hspace*{2.7cm}-\psi(\a+1)+\psi(a_{\a,-\b,\sigma_{\a,\b}(z)})+\psi(a_{\a,-\b,-\sigma_{\a,\b}(z)})]    \lb{C.8} \\
& \hspace*{2.7cm}\quad \times[1+\Oh(1-x)],\quad \a\in\bbN,\; \b\in(-1,1)\backslash\{0\},  \no \\ 
& y_{2,\a,0,-1}(z,x)\underset{x\uparrow1}{=}\f{-[2\gamma_E+\psi(a_{-\a,0,\sigma_{\a,0}(z)}) 
+\psi(a_{-\a,0,-\sigma_{\a,0}(z)})] \Gamma(-\a)}{\Gamma(a_{-\a,0,\sigma_{\a,0}(z)}) 
\Gamma(a_{-\a,0,-\sigma_{\a,0}(z)})}     \no \\
& \hspace{2.7cm}\quad \times[1+\Oh(1-x)]    \no \\
& \hspace*{2.7cm} -\f{[2\gamma_E+\psi(a_{\a,0,\sigma_{\a,0}(z)})+\psi(a_{\a,0,-\sigma_{\a,0}(z)})]2^{\a}\Gamma(\a)}{\Gamma(a_{\a,0,\sigma_{\a,0}(z)})\Gamma(a_{\a,0,-\sigma_{\a,0}(z)})} (1-x)^{-\a}   \no \\
& \hspace*{2.7cm}\quad \times[1+\Oh(1-x)],\quad \a \in \bbR \backslash \bbZ, \; \b = 0,  \lb{C.9} \\
& y_{2,\a,0,-1}(z,x)\underset{x\uparrow1}{=}\f{-[2\gamma_E+\psi(a_{-\a,0,\sigma_{\a,0}(z)}) 
+\psi(a_{-\a,0,-\sigma_{\a,0}(z)})]\Gamma(-\a)}{\Gamma(a_{-\a,0,\sigma_{\a,0}(z)})
\Gamma(a_{-\a,0,-\sigma_{\a,0}(z)})}    \no \\
& \hspace*{2.7cm}\quad  \times[1+\Oh(1-x)] -\bigg(\f{\Gamma(a_{-\a,0,\sigma_{\a,0}(z)})\Gamma(a_{-\a,0,-\sigma_{\a,0}(z)})}{\Gamma(1-\a)}    \no \\
& \hspace*{2.7cm}\quad\quad -\f{(-1)^{-\a}[2\gamma_E+\psi(a_{-\a,0,\sigma_{\a,0}(z)})+\psi(a_{-\a,0,-\sigma_{\a,0}(z)})]}{\Gamma(a_{\a,0,\sigma_{\a,0}(z)})\Gamma(a_{\a,0,-\sigma_{\a,0}(z)})}    \no \\
& \hspace*{2.7cm}\quad \times[\ln((1-x)/2)+\gamma_E-\psi(1-\a)+\psi(a_{-\a,0,\sigma_{\a,0}(z)})     \lb{C.10} \\
& \hspace*{2.7cm}\quad\quad +\psi(a_{-\a,0,-\sigma_{\a,0}(z)})]\bigg) 2^{\a} (1-x)^{-\a}[1+\Oh(1-x)], \no \\
& \hspace*{8.2cm}\a\in (-\bbN), \; \b = 0,   \no \\
& y_{2,0,0,-1}(z,x)\underset{x\uparrow1}{=}-\bigg(\Gamma([1+\sigma_{0,0}(z)]/2)
\Gamma([1-\sigma_{0,0}(z)]/2)-[\ln((1-x)/2)+2\gamma_E     \no \\
& \hspace*{2.3cm}\quad +\psi([1+\sigma_{0,0}(z)]/2)+\psi([1-\sigma_{0,0}(z)]/2)]    \lb{C.11} \\
& \hspace*{2.7cm}\quad \times\f{\psi([1+\sigma_{0,0}(z)]/2)+\psi([1-\sigma_{0,0}(z)]/2)+\gamma_E}
{\Gamma([1+\sigma_{0,0}(z)]/2)\Gamma([1-\sigma_{0,0}(z)]/2)}\bigg)   \no \\
& \hspace*{2.7cm}\quad \times[1+\Oh(1-x)],\quad \a=\b=0,    \no \\
& y_{2,\a,0,-1}(z,x)\underset{x\uparrow1}{=}\f{-(2\gamma_E+\psi(a_{\a,0,\sigma_{\a,0}(z)}) 
+ \psi(a_{\a,0,-\sigma_{\a,0}(z)}))2^{\a}\Gamma(\a)}{\Gamma(a_{\a,0,\sigma_{\a,0}(z)})
\Gamma(a_{\a,0,-\sigma_{\a,0}(z)})}     \no \\
& \hspace*{2.7cm}\quad \times (1-x)^{-\a}[1+\Oh(1-x)] -\bigg(\f{\Gamma(a_{\a,0,\sigma_{\a,0}(z)})\Gamma(a_{\a,0,-\sigma_{\a,0}(z)})}{\Gamma(\a+1)}   \no \\
& \hspace*{2.7cm} -\f{(-1)^\a [2\gamma_E+\psi(a_{\a,0,\sigma_{\a,0}(z)})+\psi(a_{\a,0,-\sigma_{\a,0}(z)})]}{\Gamma(a_{-\a,0,\sigma_{\a,0}(z)})\Gamma(a_{-\a,0,-\sigma_{\a,0}(z)})}     \lb{C.12} \\
& \hspace*{2.7cm}\quad \times[\ln((1-x)/2)+\gamma_E-\psi(\a+1)+\psi(a_{\a,0,\sigma_{\a,0}(z)})     \no \\
& \hspace*{2.7cm}\quad +\psi(a_{\a,0,-\sigma_{\a,0}(z)})]\bigg)[1+\Oh(1-x)],\quad \a\in\bbN,\; \b = 0,    \no \\
& \hspace*{8.1cm} z \in \bbC, \; x \in (-1,1).   \no 
\end{align}

We continue this appendix with a list of generalized boundary values for the solutions $y_{j,\a,\b,-1}(z,x),\ j=1,2$ at $x= \mp 1$. At the endpoint $x=-1$ one obtains  for $z\in\bbC$, 
\begin{align}\lb{C.13}
\begin{split}
\wti y_{1,\a,\b,-1}(z,-1)&=\begin{cases}
1, & \b\in(-1,0),\\
0, & \b=0,\\
0, & \b\in(0,1),
\end{cases}\\
\wti y_{1,\a,\b,-1}^{\, \prime}(z,-1)&=\begin{cases}
0, & \b\in(-1,0),\\
1, & \b=0,\\
1, & \b\in(0,1),
\end{cases}\\
\wti y_{2,\a,\b,-1}(z,-1)&=\begin{cases}
0, & \b\in(-1,0),\\
-2^{\a+1}, & \b=0,\\
\b 2^{\a+1}, & \b\in(0,1),
\end{cases}\\
\wti y_{2,\a,\b,-1}^{\, \prime}(z,-1)&=\begin{cases}
-\b 2^{\a+1}, & \b\in(-1,0),\\
0, & \b=0,\\
0, & \b\in(0,1);
\end{cases}
\end{split}
\quad \a\in\bbR.
\end{align}

To obtain generalized boundary values at the endpoint $x=1$, one employs the connection formulas \eqref{C.1}--\eqref{C.12}), to find for $z\in\bbC$, 
\begin{align}
\no \begin{split}
\wti y_{1,\a,\b,-1}(z,1)&=\begin{cases}
\dfrac{\Gamma(1+\b)\Gamma(-\a)}{\Gamma(a_{-\a,\b,\sigma_{\a,\b}(z)})\Gamma(a_{-\a,\b,-\sigma_{\a,\b}(z)})}, & \a\in(-1,0),\\[3mm]
\dfrac{- 2^{1+\a+\b}\Gamma(1+\a)\Gamma(1+\b)}{\Gamma(a_{\a,\b,\sigma_{\a,\b}(z)})\Gamma(a_{\a,\b,-\sigma_{\a,\b}(z)})}, & \a\in[0,1),
\end{cases}
\end{split}\\[1mm]
\no \begin{split}
\wti y_{1,\a,\b,-1}^{\, \prime}(z,1)&=\begin{cases}
\dfrac{2^{1+\a+\b}\Gamma(1+\a)\Gamma(1+\b)}{\Gamma(a_{\a,\b,\sigma_{\a,\b}(z)})\Gamma(a_{\a,\b,-\sigma_{\a,\b}(z)})}, & \a\in(-1,0),\\[3mm]
\dfrac{-\Gamma(1+\b)}{\Gamma(a_{0,\b,\sigma_{0,\b}(z)})\Gamma(a_{0,\b,-\sigma_{0,\b}(z)})}[2\gamma_E\\
\quad+\psi(a_{0,\b,\sigma_{0,\b}(z)})+\psi(a_{0,\b,-\sigma_{0,\b}(z)})], & \a=0,\\[3mm]
\dfrac{\Gamma(1+\b)\Gamma(-\a)}{\Gamma(a_{-\a,\b,\sigma_{\a,\b}(z)})\Gamma(a_{-\a,\b,-\sigma_{\a,\b}(z)})}, & \a\in(0,1);
\end{cases}
\end{split}\\[1mm]
&\lb{C.14} \hspace{6.5cm} \b\in(-1,1),\\[1mm]
\no\begin{split}
\wti y_{2,\a,\b,-1}(z,1)&=\begin{cases}
\dfrac{2^{-\b}\Gamma(1-\b)\Gamma(-\a)}{\Gamma(a_{-\a,-\b,\sigma_{\a,\b}(z)})\Gamma(a_{-\a,-\b,-\sigma_{\a,\b}(z)})}, & \a\in(-1,0),\\[3mm]
\dfrac{-2^{\a+1}\Gamma(1+\a)\Gamma(1-\b)}{\Gamma(a_{\a,-\b,\sigma_{\a,\b}(z)})\Gamma(a_{\a,-\b,-\sigma_{\a,\b}(z)})}, & \a\in[0,1),
\end{cases}
\end{split}\\[1mm]
\no \begin{split}
\wti y_{2,\a,\b,-1}^{\, \prime}(z,1)&=\begin{cases}
\dfrac{2^{\a+1}\Gamma(1+\a)\Gamma(1-\b)}{\Gamma(a_{\a,-\b,\sigma_{\a,\b}(z)})\Gamma(a_{\a,-\b,-\sigma_{\a,\b}(z)})}, & \a\in(-1,0),\\[3mm]
\dfrac{-2^{-\b}\Gamma(1-\b)}{\Gamma(a_{0,-\b,\sigma_{0,\b}(z)})\Gamma(a_{0,-\b,-\sigma_{0,\b}(z)})}[2\gamma_E\\
\quad+\psi(a_{0,-\b,\sigma_{0,\b}(z)})+\psi(a_{0,-\b,-\sigma_{0,\b}(z)})], & \a=0,\\[3mm]
\dfrac{2^{-\b}\Gamma(1-\b)\Gamma(-\a)}{\Gamma(a_{-\a,-\b,\sigma_{\a,\b}(z)})\Gamma(a_{-\a,-\b,-\sigma_{\a,\b}(z)})}, & \a\in(0,1);
\end{cases}
\end{split}  \\[1mm]
& \hspace{6.25cm} \b\in(-1,1)\backslash\{0\},   \lb{C.15} \\[1mm]
\no \begin{split}
\wti y_{2,\a,0,-1}(z,1)&=\begin{cases}
\dfrac{-[2\gamma_E+\psi(a_{-\a,0,\sigma_{\a,0}(z)})+\psi(a_{-\a,0,-\sigma_{\a,0}(z)})] 
\Gamma(-\a)}{\Gamma(a_{-\a,0,\sigma_{\a,0}(z)})\Gamma(a_{-\a,0,-\sigma_{\a,0}(z)})},\\[1mm]
\hspace{7.45cm}\a\in(-1,0),\\[3mm]
\dfrac{[2\gamma_E+\psi(a_{\a,0,\sigma_{\a,0}(z)})+\psi(a_{\a,0,-\sigma_{\a,0}(z)})]\Gamma(1+\a)}{2^{-\a-1}\Gamma(a_{\a,0,\sigma_{\a,0}(z)})\Gamma(a_{\a,0,-\sigma_{\a,0}(z)})},\\[1mm]
\hspace{7.5cm}\a\in[0,1),
\end{cases}
\end{split}\\[1mm]
\begin{split} 
\wti y_{2,\a,0,-1}^{\, \prime}(z,1)&=\begin{cases}
\dfrac{[2\gamma_E+\psi(a_{\a,0,\sigma_{\a,0}(z)})+\psi(a_{\a,0,-\sigma_{\a,0}(z)})]\Gamma(1+\a)}{-2^{-\a-1}\Gamma(a_{\a,0,\sigma_{\a,0}(z)})\Gamma(a_{\a,0,-\sigma_{\a,0}(z)})},\\[1mm]
\hspace{7.5cm}\a\in(-1,0),\\[1mm]
-\Gamma([1+\sigma_{0,0}(z)]/2)\Gamma([1-\sigma_{0,0}(z)]/2)\\
+\dfrac{[2\gamma_E
+\psi([1+\sigma_{0,0}(z)]/2)+\psi([1-\sigma_{0,0}(z)]/2)]^2}{\Gamma([1+\sigma_{0,0}(z)]/2)
\Gamma([1-\sigma_{0,0}(z)]/2)},\quad \a=0,\\[3mm]
\dfrac{-[2\gamma_E+\psi(a_{-\a,0,\sigma_{\a,0}(z)})+\psi(a_{-\a,0,-\sigma_{\a,0}(z)})]\Gamma(-\a)}
{\Gamma(a_{-\a,0,\sigma_{\a,0}(z)})\Gamma(a_{-\a,0,-\sigma_{\a,0}(z)})},\\[1mm]
\hspace{7.5cm}\a\in(0,1);   
\end{cases}   
\end{split} \no \\[1mm] 
&\hspace{8cm} \b = 0.     \lb{C.16}   
\end{align}

Finally, we turn to the limiting behavior of $y_{j,\a,\b,1}(z,x)$ as $x\downarrow -1$ in some cases, applying the connection formulas \eqref{B.6}, \eqref{B.7} from Appendix \ref{sB}: 
\begin{align}
 y_{1,\a,\b,1}(z,x) &= \mathstrut_2F_1(a_{\a,\b,\sigma_{\a,\b}(z)},a_{\a,\b,-\sigma_{\a,\b}(z)};1+\a;(1-x)/2)  \no \\
& = \f{\Gamma(1+\a) \Gamma(-\b)}{\Gamma(a_{\a,-\b,\sigma_{\a,\b}(z)}) 
\Gamma(a_{\a,-\b,-\sigma_{\a,\b}(z)})} \no \\
&\qquad \times \mathstrut_2F_1(a_{\a,\b,\sigma_{\a,\b}(z)},a_{\a,\b,-\sigma_{\a,\b}(z)};1+\b;(1+x)/2)
\no \\
& \quad + \f{2^{\b} \Gamma(1+\a) \Gamma(\b)}{\Gamma(a_{\a,\b,\sigma_{\a,\b}(z)}) 
\Gamma(a_{\a,\b,-\sigma_{\a,\b}(z)})} (1+x)^{-\b}   \no \\
& \qquad \times \mathstrut_2F_1(a_{\a,-\b,\sigma_{\a,\b}(z)},a_{\a,-\b,-\sigma_{\a,\b}(z)};1-\b;(1+x)/2),   \lb{C.17} \\
& \hspace*{4.95cm} \a \in \R \backslash (- \N), \; \b \in \R \backslash \Z,   \no \\
y_{2,\a,\b,1}(z,x) &= (1-x)^{-\a} \mathstrut_2F_1(a_{-\a,\b,\sigma_{\a,\b}(z)},a_{-\a,\b,-\sigma_{\a,\b}(z)};1-\a;(1-x)/2) \no \\
&  = \f{2^{-\a} \Gamma(1-\a) \Gamma(-\b)}{\Gamma(a_{-\a,-\b,\sigma_{\a,\b}(z)}) 
\Gamma(a_{-\a,-\b,-\sigma_{\a,\b}(z)})}    \no \\
& \qquad \times \mathstrut_2F_1(a_{\a,\b,\sigma_{\a,\b}(z)},a_{\a,\b,-\sigma_{\a,\b}(z)};1+\b;(1+x)/2)
\no \\
& \quad + \f{2^{\b-\a} \Gamma(1-\a) \Gamma(\b)}{\Gamma(a_{-\a,\b,\sigma_{\a,\b}(z)}) 
\Gamma(a_{-\a,\b,-\sigma_{\a,\b}(z)})} (1+x)^{-\b}    \no \\
& \qquad \times \mathstrut_2F_1(a_{\a,-\b,\sigma_{\a,\b}(z)},a_{\a,-\b,-\sigma_{\a,\b}(z)};1-\b;(1+x)/2),  
\lb{C.18} \\
& \hspace*{5.7cm} \a \in \R \backslash \N, \; \b \in \R \backslash \Z;    \no \\
& \hspace*{5.8cm} z \in \bbC, \; x \in (-1,1).    \no 
\end{align}
In particular,
\begin{align}
& y_{1,\a,\b,1}(z,x) \underset{x \downarrow -1}{=} 
\f{\Gamma(1+\a) \Gamma(-\b)}{\Gamma(a_{\a,-\b,\sigma_{\a,\b}(z)}) 
\Gamma(a_{\a,-\b,-\sigma_{\a,\b}(z)})} [1 + \Oh(1+x)]    \no \\
& \hspace*{2.9cm} + \f{2^{\b} \Gamma(1+\a) \Gamma(\b)}{\Gamma(a_{\a,\b,\sigma_{\a,\b}(z)}) 
\Gamma(a_{\a,\b,-\sigma_{\a,\b}(z)})} (1+x)^{-\b} [1 + \Oh(1+x)],  \no \\
&  \hspace*{7.2cm} \a \in \R \backslash (- \N), \; \b \in \R \backslash \Z,   \lb{C.19} \\
& y_{2,\a,\b,1}(z,x) \underset{x \downarrow -1}{=} \f{2^{-\a} 
\Gamma(1-\a) \Gamma(-\b)}{\Gamma(a_{-\a,-\b,\sigma_{\a,\b}(z)}) 
\Gamma(a_{-\a,-\b,-\sigma_{\a,\b}(z)})} [1 + \Oh(1+x)]    \no \\
& \hspace*{2.9cm} + \f{2^{\b-\a} \Gamma(1-\a) \Gamma(\b)}{\Gamma(a_{-\a,\b,\sigma_{\a,\b}(z)}) 
\Gamma(a_{-\a,\b,-\sigma_{\a,\b}(z)})} (1+x)^{-\b}  [1 + \Oh(1+x)],    \no \\
& \hspace*{7.8cm} \a \in \R \backslash \N, \; \b \in \R \backslash \Z.    \lb{C.20} 
\end{align}

\section{Laguerre \texorpdfstring{$m$}{}-function as a limit of the Jacobi \texorpdfstring{$m$}{}-function} 
\lb{sD}

The Laguerre polynomials $L_n^\b(t)$ given by (see. \cite[Ch.~V]{Sz75})
\begin{align}
    L_n^\b(t): = \binom{n+b}{n} \ _1F_1(-n, \b+1; t), \quad t \in (0,\infty), \; n \in \mathbb{N}_0, \; \b \in (-1,\infty),
\end{align}
can be obtained as a limit of the Jacobi polynomials (cf.~\cite[eq.~18.7.22]{OLBC10}, \cite[eq.~5.3.4]{Sz75})
\begin{align} \lb{10.4.2}
    L^\b_n(t) = \lim_{\alpha \uparrow \infty} (-1)^n P_n^{\a,\b}((2t/\a) -1),   
\end{align}
where the convergence is locally uniform in the complex $t$-plane.
This is one example of the limiting relations in the Askey scheme \cite{AA85}.\footnote{The Askey scheme has been expanded by Koekoek and Swarttouw \cite{KS98} (see also \cite{TL01}) to include basic orthogonal polynomials.} Similarly, for the Laguerre differential expression given by 
\begin{align}
& \tau_{Lag}^\b = -t^{-\b}e^t \f{d}{dt}t^{\b + 1}e^{-t} \f{d}{dt}, \quad t \in (0,\infty), \; \b \in (-1,\infty),
\\\no
& p(t) = p_\b(t) = t^{\b+1} e^{-t}, \quad q(t) = 0, \quad r(t) = r_\b(t) = t^\b e^{-t},
\end{align}
one obtains 
\begin{align} \lb{10.4.4}
    \lim_{\alpha \uparrow \infty} \f{1}{\a}\tau_{Jac}^{\a,\b} = \tau_{Lag}^\b, \ \mbox{ with } \ x = (2t/\a) - 1.
\end{align}
Here convergence means local uniform convergence of the coefficients in the complex $t$-plane. Additionally, the eigenvalues $\l_n^{Lag}$ of the Laguerre polynomials are independent of the parameter $\b$ and can be obtained via
\begin{align} \lb{10.4.5}
    \l_n^{Lag} = n = \lim_{\a\uparrow \infty} \l_n^{\a,\b}/\a  , \quad n \in \mathbb{N}_0. 
\end{align}
For the weight functions we have the limiting relation
\begin{align}
    r_\b(t)= t^\b e^{-t} = \lim_{\alpha \uparrow \infty} 2^{-\a} (\a/2)^\b r_{Jac}^{\a,\b}((2t/a)-1).
\end{align}
The computation of the $m$-function for various boundary conditions has been performed in \cite[Sect.~6]{GLN20}, we note, however, that our parameter $\b$ is smaller by $1$. The $m$-function for the self-adjoint extension having the Laguerre polynomials as eigenfunctions is equal to the $m$-function corresponding to the Friedrichs extension in the case $\b \in (0,1)$, and holds for $\b \in (-1,1) \backslash \{0\}$
\begin{align} \lb{10.4.7}
    m_{Lag}^\b(w) = \f{\G(-\b)\G(-w)}{\G(\b+1)\G(-\b-w)}, \quad \b \in (-1,1)\backslash \{0\}, \; w \in \bbC \backslash \bbN_0.
\end{align}
Moreover from \eqref{10.4.4} or \eqref{10.4.5} we see that the correct scaling of the new spectral parameter $w$ as $\alpha \uparrow \infty$ should be $w = z/\a$, where $z$ is the spectral parameter of the Jacobi problem.

A fundamental system for the Laguerre problem is given in terms of confluent hypergeometric functions (see \cite[Ch.~6.3]{GLN20}):
\begin{align}
\begin{aligned}
    \widehat y_{1,\b}(w,t) &= {}_1F_1(-w, \b + 1; t), 
    \\
    \widehat y_{2,\b}(w,t) &= t^{-\b} {}_1F_1(-\b-w, 1-\b; t),
\end{aligned} \quad \b \in (-1,1)\backslash \{0\}, \; w \in \C, \; t \in (0,\infty).
\end{align}
One can check that $W_{Lag}(\widehat y_{2,\b}(w,\cdot), \widehat y_{1,\b}(w,\cdot)) = \b$. Generalizing \eqref{10.4.2} we can obtain this fundamental system as a limit of the Jacobi fundamental system found in \eqref{A.13} and \eqref{A.14}. For this we need a special case of \cite[eq.~16.8.10]{OLBC10}:
\begin{align} \lb{10.4.9}
    {}_1F_1(a,c;t) =  \lim_{b \to \infty} {}_2F_1(a,b,c;t/b),
\end{align}
and the observation that
\begin{align} \lb{10.4.10}
    [\a+\b+1-\sigma(\a t)]/2 & \underset{\alpha \uparrow \infty}{=} -t + O\big(\a^{-1}\big), 
    \\\no
    [\a+\b+1+\sigma(\a t)]/2 & \underset{\alpha \uparrow \infty}{=} \a +\b+1+t+O\big(\a^{-1}\big),
\end{align}
as $\alpha \uparrow \infty$. Together \eqref{10.4.9} and \eqref{10.4.10} imply that
\begin{align}
     \widehat y_{1,\b}(w,t) &= \lim_{\alpha \uparrow \infty} y_{1,\a,\b,-1}(\a t,(2t/\a)-1),
     \\\no
     \widehat y_{2,\b}(w,t) &= \lim_{\alpha \uparrow \infty} (2/\a)^\b y_{2,\a,\b,-1}(\a t,(2t/\a)-1); \quad w \in \bbC, \; t \in (0,\infty).
\end{align}
Taking into account the normalization chosen in \cite[Sect.~6]{GLN20} and in our paper, the following relation between the two $m$-functions \eqref{9.1} and \eqref{10.4.7} holds:
\begin{align} \lb{10.4.12}
    m_{Lag}^\b(w) = \lim_{\alpha \uparrow \infty} 2^{\a+1}(2/\a)^\b \hatt m_{\a,\b}(\a w).
\end{align}
Using \cite[eq.~6.1.47]{AS72} with \eqref{10.4.10} one can indeed check that the limit \eqref{10.4.12} holds locally uniformly for $w \in \C \backslash  \N_0$.

\section{Special Cases} \lb{sE}

\subsection{The Case of Gegenbauer or Ultraspherical Polynomials}
\hfill

In this section, we look at some of the special cases of the Jacobi differential expression in more detail beginning with the Gegenbauer (or ultraspherical) case (see, e.g., \cite[Ch. 22]{AS72}, \cite[Ch.~18]{OLBC10}, \cite[Ch. IV]{Sz75}). This can be realized by choosing the parameters $\a=\b=\mu-1/2$ so that
\begin{align}
p(x)=(1-x^2)^{\mu+1/2},\quad r(x)=(1-x^2)^{\mu-1/2},\quad q(x)=0,\quad x\in(-1,1),\quad \mu\in\bbR.
\end{align}
Thus, one considers the differential expression
\begin{align}
\tau_\mu=-(1-x^2)^{1/2-\mu}(d/dx)\big((1-x^2)^{\mu+1/2}\big)(d/dx),\quad x\in(-1,1),\ \mu\in\bbR,
\end{align}
noting  at the endpoints $x=\pm1$, $\tau_\mu$ is regular for $\mu\in(-1/2,1/2)$, in the limit circle case for $\mu\in[1/2,3/2)$, and in the limit point case for $\mu\in\bbR\backslash(-1/2,3/2)$. Furthermore, choosing $\mu=1/2$ one arrives at the Legendre equation once again. After the appropriate change in parameter, \eqref{5.4} describes the form of the $m$-function for the new parameter $\mu\in(-1/2,3/2)$, whereas Theorem \ref{t7.1} describes $\mu\in\bbR\backslash(-1/2,3/2)$. 

\subsection{The Case of Chebyshev Polynomials}
\hfill

The Chebyshev cases of the first and second kind are two more special cases. 

The Chebyshev case of the first kind is realized by choosing $\mu=0$ in the Gegenbauer case, or $\a=\b=-1/2$ in the Jacobi case (see, e.g., \cite[Ch. 22]{AS72}, \cite[Ch.~18]{OLBC10}, \cite[Ch. IV]{Sz75}). Thus, one considers the differential expression
\begin{align}
\tau_0 = - (1-x^2)^{1/2}(d/dx)\big((1-x^2)^{1/2}\big)(d/dx),\quad x\in(-1,1),
\end{align}
which is regular at $x=\pm1$. By \eqref{5.4} one can write
\begin{align}
\begin{split}
m_{\g,\d,-1/2,-1/2}(z)&=\big[-2^{-1/2}\cos(\g)(\cos(\d)\wti y_{1}(z,1)+\sin(\d)\wti y_{1}^{\, \prime}(z,1))\\
&\quad-\sin(\g)(\cos(\d)\wti y_{2}(z,1)+\sin(\d)\wti y_{2}^{\, \prime}(z,1))\big]\\
&\quad\times\big[-2^{-1/2}\sin(\g)(\cos(\d)\wti y_{1}(z,1)+\sin(\d)\wti y_{1}^{\, \prime}(z,1))\\
&\quad\quad+\cos(\g)(\cos(\d)\wti y_{2}(z,1)+\sin(\d)\wti y_{2}^{\, \prime}(z,1))\big]^{-1},\\
&\quad\hspace{1.85cm} \g,\d\in[0,\pi),\ z\in\rho(T_{\g,\d,-1/2,-1/2}),
\end{split}
\end{align}
with
\begin{align}
\wti y_{1}(z,1)&=
\f{\Gamma(1/2)\Gamma(1/2)}{\Gamma((1/2)+z^{1/2})\Gamma((1/2)-z^{1/2})}=\f{\pi}{\Gamma((1/2)+z^{1/2})\Gamma((1/2)-z^{1/2})},    \no \\[1mm]
\begin{split} 
\wti y_{1}^{\, \prime}(z,1)&=
\f{\Gamma(1/2)\Gamma(1/2)}{\Gamma(z^{1/2})\Gamma(-z^{1/2})}=\f{\pi}{\Gamma(z^{1/2})\Gamma(-z^{1/2})},   
\\[3mm]
\wti y_{2}(z,1)&=
\f{2^{1/2}\Gamma(3/2)\Gamma(1/2)}{\Gamma(1+z^{1/2})\Gamma(1-z^{1/2})}=\f{2^{-1/2}\pi}{\Gamma(1+z^{1/2})\Gamma(1-z^{1/2})},
\end{split} \\[1mm]
\wti y_{2}^{\, \prime}(z,1)&=
\f{2^{1/2}\Gamma(3/2)\Gamma(1/2)}{\Gamma((1/2)+z^{1/2})\Gamma((1/2)-z^{1/2})}=\f{2^{-1/2}\pi}{\Gamma((1/2)+z^{1/2})\Gamma((1/2)-z^{1/2})}.    \no 
\end{align}
Here we have used (cf. \cite[Eqs. 6.1.8, 6.1.9]{AS72}) $\Gamma(1/2)=\pi^{1/2}$, $\Gamma(3/2)=2^{-1}\pi^{1/2}$, 
and the fact that
\begin{align}
\sigma_{-1/2,-1/2}(z)=2 z^{1/2}.
\end{align}

\begin{example}[Neumann boundary conditions]\hfill

Considering the Neumann boundary conditions, one obtains
\begin{align}
m_{\pi/2,\pi/2,-1/2,-1/2}(z)&=2^{1/2}\f{\wti y_{2}^{\, \prime}(z,1)}{\wti y_{1}^{\, \prime}(z,1)}    \\
&=\f{\Gamma(z^{1/2})\Gamma(-z^{1/2})}{\Gamma((1/2)+z^{1/2})\Gamma((1/2)-z^{1/2})},\ z\in\rho(T_{\pi/2,\pi/2,-1/2,-1/2}),   \no 
\end{align}
which has simple poles at
\begin{align}
z_n=n^2,\quad n\in\bbN_0,
\end{align}
as expected since the Chebyshev polynomials of the first kind, $T_n(\dott)$, satisfy Neumann boundary conditions because of the relation $($cf. \cite[eq. 22.5.31]{AS72}$)$
\begin{align}
T_n(x)=\f{n! \pi^{1/2}}{\Gamma(n+(1/2))}P_n^{-1/2,-1/2}(x), \quad n \in \bbN_0, \; x \in (-1,1).
\end{align}
\end{example}

\medskip

The Chebyshev case of the second kind is realized by choosing $\mu=1$ in the Gegenbauer case, or $\a=\b=1/2$ in the Jacobi case (see, e.g., \cite[Ch. 22]{AS72}, \cite[Ch.~18]{OLBC10}, \cite[Ch. IV]{Sz75}). Thus, one obtains the differential expression
\begin{align}
\tau_1=-(1-x^2)^{-1/2}(d/dx)\big((1-x^2)^{3/2}\big)(d/dx),\quad x\in(-1,1),
\end{align}
which is in the limit circle case at $x=\pm1$. Thus by \eqref{5.4} one can write
\begin{align}
\begin{split}
m_{\g,\d,1/2,1/2}(z)&=\big[2^{1/2}\sin(\g)(\cos(\d)\wti y_{1}(z,1)+\sin(\d)\wti y_{1}^{\, \prime}(z,1))\\
&\quad +\cos(\g)(\cos(\d)\wti y_{2}(z,1)+\sin(\d)\wti y_{2}^{\, \prime}(z,1))\big]\\
&\quad\times\big[2^{1/2}\cos(\g)(\cos(\d)\wti y_{1}(z,1)+\sin(\d)\wti y_{1}^{\, \prime}(z,1))\\
&\quad\quad +\sin(\g)(\cos(\d)\wti y_{2}(z,1)+\sin(\d)\wti y_{2}^{\, \prime}(z,1))\big]^{-1},\\
&\quad\hspace{2.25cm} \g,\d\in[0,\pi),\ z\in\rho(T_{\g,\d,1/2,1/2}),
\end{split}
\end{align}
with
\begin{align}
\begin{split}
\wti y_{1}(z,1)&=
\f{-4\Gamma(3/2)\Gamma(3/2)}{\Gamma(1+(1+z)^{1/2})\Gamma(1-(1+z)^{1/2})}     \\
&=\f{-\pi}{\Gamma(1+(1+z)^{1/2})\Gamma(1-(1+z)^{1/2})},\\[1mm]
\wti y_{1}^{\, \prime}(z,1)&=
\f{\Gamma(3/2)\Gamma(-1/2)}{\Gamma((1/2)+(1+z)^{1/2})\Gamma((1/2)-(1+z)^{1/2})}    \\
&=\f{-\pi}{\Gamma((1/2)+(1+z)^{1/2})\Gamma((1/2)-(1+z)^{1/2})},\\[3mm]
\wti y_{2}(z,1)&=
\f{-2^{3/2}\Gamma(3/2)\Gamma(1/2)}{\Gamma((1/2)+(1+z)^{1/2})\Gamma((1/2)-(1+z)^{1/2})}    \\
&=\f{-2^{1/2}\pi}{\Gamma((1/2)+(1+z)^{1/2})\Gamma((1/2)-(1+z)^{1/2})},\\[1mm]
\wti y_{2}^{\, \prime}(z,1)&=
\f{2^{-1/2}\Gamma(3/2)\Gamma(1/2)}{\Gamma((1+z)^{1/2})\Gamma(-(1+z)^{1/2})}     
=\f{2^{-3/2}\pi}{\Gamma((1+z)^{1/2})\Gamma(-(1+z)^{1/2})}.
\end{split}
\end{align}
Here we have used (cf. \cite[Eqs. 6.1.8, 6.1.9]{AS72}) $\Gamma(-1/2)=-2\pi^{1/2}$, $\Gamma(1/2)=\pi^{1/2}$, $\Gamma(3/2)=2^{-1}\pi^{1/2}$, and the fact that
\begin{align}
\sigma_{1/2,1/2}(z)=2 (1+z)^{1/2}.
\end{align}

\begin{example}[The Friedrichs extension]\hfill

Considering the Friedrichs extension, one obtains
\begin{align}
m_{0,0,1/2,1/2}(z)&=2^{-1/2}\f{\wti y_{2}(z,1)}{\wti y_{1}(z,1)}     \\
&=\f{\Gamma(1+(1+z)^{1/2})\Gamma(1-(1+z)^{1/2})}{\Gamma((1/2)+(1+z)^{1/2})\Gamma((1/2)-(1+z)^{1/2})},\ z\in\rho(T_{0,0,1/2,1/2}),     \no 
\end{align}
which has simple poles at
\begin{align}
z_n=n(n+2),\quad n\in\bbN_0,
\end{align}
as expected since the Chebyshev polynomials of the second kind, $U_n(\dott)$, satisfy Friedrichs boundary conditions as they satisfy the relation $($cf. \cite[eq. 22.5.32]{AS72}$)$
\begin{align}
U_n(x)=\f{(n+1)!\sqrt{\pi}}{2\Gamma(n+(3/2))}P_n^{1/2,1/2}(x), \quad n \in \bbN_0, \; x \in (-1,1).
\end{align}
\end{example}

\subsection{The Case of Zernike Polynomials}
\hfill

The Zernike polynomials are a sequence of orthogonal polynomials on the unit disk that play important roles in various branches of optics and optometry (see, e.g., \cite{LF11}, \cite{MD07}). The even and odd Zernike polynomials are defined, respectively, as
\begin{align}
\begin{split}
Z_n^\ell(\rho,\varphi)&=R_n^\ell(\rho)\cos(\ell\varphi),\\
Z_n^{-\ell}(\rho,\varphi)&=R_n^\ell(\rho)\sin(\ell\varphi); \quad \ell,n\in\bbN_0,\ n\geq \ell\geq 0,\ 0\leq\rho\leq1,
\end{split}
\end{align}
with the radial polynomials, $R_n^\ell(\rho)$, defined by hypergeometric functions as
\begin{align}
R_n^\ell(\rho)=\begin{cases}
\f{(-1)^{(n-\ell)/2}((n+\ell)/2)!}{\ell!((n-\ell)/2)!\rho^{-\ell}} F\big(1+[(n+\ell)/2],-(n-\ell)/2;1+\ell;\rho^{2}\big),\\
\hfill n-\ell \text{ even},\\
0,\hfill n-\ell \text{ odd}.
\end{cases}
\end{align}
They are related to the Jacobi polynomials by (cf. \cite[eq. (14)]{LF11})
\begin{align}\lb{10.5.21}
R_n^\ell(\rho)=(-1)^{(n-\ell)/2}\rho^\ell P_{(n-\ell)/2}^{\ell,0}\big(1-\rho^2\big), \quad \ell,n\in\bbN_0,\ n\geq \ell\geq 0,\ 0\leq\rho\leq1.
\end{align}
This implies that letting $\a=\ell\in\bbN_0,\ \b=0$ in the Jacobi expression yields a representation of the radial Zernike differential expression as
\begin{align}
\tau_{\ell} = - (1-x)^{-\ell}(d/dx) \big((1+x)(1-x)^{\ell+1}\big) (d/dx),\quad \ell\in\bbN_0,\ x \in (-1,1),
\end{align}
which is always in the limit circle case at $x=-1$, but only in the limit circle case at $x=1$ when $\ell=0$ and in the limit point case otherwise, that is, for $\ell\in\bbN$. In particular, when $\ell=0$, this actually coincides with the Legendre case discussed in Example \ref{e5.3}. The case when $\ell\in\bbN$ was covered in Section \ref{s6}. Replacing $n$ by $(n-\ell)/2$ and $\alpha$ by $\ell$ in \eqref{6.9},
$n,\ell\in\bbN_0$, $n\geq \ell\geq 1$, one obtains for the Friedrichs extension
\begin{align}
\begin{split}
m_{F,\ell,0}(z)=-2^{-\ell-1} [2\gamma_E+\psi([\ell+1+\sigma(z)]/2)+\psi([\ell+1-\sigma(z)]/2)],&   \\
\ell\in\bbN_0,\ z\in\rho(T_{F,\ell,0}),&   
\end{split}
\end{align}
with simple poles at
\begin{align}
z_{n,\ell}=\frac{n-\ell}{2}\left(\frac{n+\ell}{2}+1\right),\quad n,\ell\in\bbN_0,\ n\geq \ell\geq 1,
\end{align}
as expected from relation \eqref{10.5.21}.

\medskip

\noindent 
{\bf Acknowledgments.} We are indebted to Dale Frymark, Conni Liaw, Andrei Martinez-Finkelshtein, Roger Nichols, and Brian Simanek  for very helpful discussions. 
 
 
\end{document}